\documentclass[reqno,a4paper,twoside]{amsart}

\usepackage[a4paper,margin=0.65in,footskip=0.25in]{geometry}

\usepackage{graphicx}
\usepackage{enumitem}

\setenumerate{label=\textnormal{(\arabic*)}}

\allowdisplaybreaks[4]

\usepackage{amsmath,amssymb,dsfont,verbatim,bm,mathrsfs,stmaryrd,mathabx}

\usepackage{mathtools}
\usepackage[raggedright]{titlesec}
\usepackage[utf8]{inputenc}
\usepackage[T1]{fontenc}

\usepackage[x11names,dvipsnames]{xcolor}
	\definecolor{egyptianblue}{rgb}{0.06, 0.2, 0.65}
	\definecolor{green(ncs)}{rgb}{0.0, 0.62, 0.42}

\usepackage{hyperref}
\hypersetup{
	colorlinks=true,
	citecolor=green(ncs),
	linkcolor= egyptianblue,
	filecolor=magenta,
	urlcolor=cyan,
	unicode=true,
	pdfpagemode=FullScreen,
}

\titleformat{\chapter}[display]
{\normalfont\huge\bfseries}{\chaptertitlename\\thechapter}{20pt}{\Huge}
\titleformat{\section}
{\normalfont\Large\bfseries\center}{\thesection}{1em}{}
\titleformat{\subsection}
{\normalfont\large\bfseries}{\thesubsection}{1em}{}
\titleformat{\subsubsection}
{\normalfont\normalsize\bfseries}{\thesubsubsection}{1em}{}
\titleformat{\paragraph}[runin]
{\normalfont\normalsize\bfseries}{\theparagraph}{1em}{}
\titleformat{\subparagraph}[runin]
{\normalfont\normalsize\bfseries}{\thesubparagraph}{1em}{}

\titlespacing*{\chapter} {0pt}{50pt}{40pt}
\titlespacing*{\section} {0pt}{3.5ex plus 1ex minus .2ex}{2.3ex plus .2ex}
\titlespacing*{\subsection} {0pt}{3.25ex plus 1ex minus .2ex}{1.5ex plus .2ex}
\titlespacing*{\subsubsection}{0pt}{3.25ex plus 1ex minus .2ex}{1.5ex plus .2ex}
\titlespacing*{\paragraph} {0pt}{3.25ex plus 1ex minus .2ex}{1em}
\titlespacing*{\subparagraph} {\parindent}{3.25ex plus 1ex minus .2ex}{1em}

\usepackage{float, subfig}
\usepackage{amsrefs}

\usepackage{titletoc}

\contentsmargin{2.55em}
\dottedcontents{section}[1.5em]{}{2em}{1pc}
\dottedcontents{subsection}[4.35em]{}{2.8em}{1pc}
\dottedcontents{subsubsection}[7.6em]{}{3.2em}{1pc}
\dottedcontents{paragraph}[10.3em]{}{3.2em}{1pc}

\usepackage[columns=3,rule=0pt]{idxlayout}
\setindexprenote{For convenience of the reader we list below almost all symbols used in this work, indicating the page in which each
of them is introduced. We only except those that are extremely standard.}

\makeindex

\newtheorem{theorem}{Theorem}[section]
\newtheorem{lemma}[theorem]{Lemma}
\newtheorem{proposition}[theorem]{Proposition}

\theoremstyle{definition}

\newtheorem{notation}[theorem]{Notation}

\theoremstyle{remark}
\newtheorem{remark}[theorem]{Remark}

\usepackage{tikz}
\usetikzlibrary{matrix,shapes}
\usetikzlibrary{cd}
\tikzcdset{diagrams={nodes={inner sep=2pt}}}
\usetikzlibrary{arrows}
\tikzcdset{arrow style=tikz, diagrams={>={Latex[round,length=3pt,width=2pt,inset=1.75pt]}}}

\DeclareMathOperator{\Ide}{Id}
\DeclareMathOperator{\Ext}{Ext}

\DeclareMathOperator{\Soc}{Soc}

\DeclareMathOperator{\Z}{Z}

\DeclareMathOperator{\ima}{Im}

\DeclareMathOperator{\Ho}{H}

\DeclareMathOperator{\ho}{H}

\DeclareMathOperator{\soc}{Soc}

\newcommand{\ov}{\overline}

\newcommand{\wh}{\widehat}
\newcommand{\wt}{\widetilde}

\newcommand{\xcirc}{}

\numberwithin{equation}{section}

\DeclareMathAlphabet{\mathpzc}{OT1}{pzc}{m}{it}

\usepackage{mathtools}
\newtagform{brackets}{[}{]}
\usetagform{brackets}

\let\origmaketitle\maketitle
\def\maketitle{
	\begingroup
	\def\uppercasenonmath##1{} % this disables uppercasing title
	\let\MakeUppercase\relax % this disables uppercasing authors
	\origmaketitle
	\endgroup
}

\usepackage{fancyhdr}
\usepackage[foot]{amsaddr}

\usepackage{adjustbox}

\begin{document}
	
\title{\large Cohomology of Trivial Linear Cycle Sets}

\author[Jorge A. Guccione]{Jorge A. Guccione$^{1}$}
\address{$^{1}$ Pontificia Universidad Cat\'olica del Per\'u, Secci\'on Matem\'aticas, PUCP, Avenida Universitaria~1801, San Miguel, Lima~32, Per\'u.}
\email{vander@dm.uba.ar}
	
\author[Jorge A. Guccione]{Juan J. Guccione$^{1}$}
%\address{Pontificia Universidad Cat\'olica del Per\'u, Secci\'on Matem\'aticas, PUCP, Av. Universitaria 1801, San Miguel, Lima 32, Per\'u.}
\email{jjguccione@gmail.com}
	
\author[Christian Valqui]{Christian Valqui$^{1}$}
%\address{Pontificia Universidad Cat\'olica del Per\'u, Secci\'on Matem\'aticas, PUCP, Av. Universitaria 1801, San Miguel, Lima 32, Per\'u.}
\email{cvalqui@pucp.edu.pe}

\thanks{This study was funded by the Vicerrectorado de Investigaci\'on (VRI) at the Pontificia Universidad Cat\'olica del Per\'u through grant CAP 2025 - PI1273}

\subjclass[2020]{81R50, 16T25}
\keywords{Linear cycle sets, extensions, cohomology}
	
\begin{abstract} we provide a complete classification of extensions of a trivial linear cycle set $H$ by an abelian group~$I$, under the assumption that both $H$ and $I$ are finite cyclic $p$-groups with $p$ odd, or $p=2$ and $|H| \le 4$. This yields an explicit parametrization of all possible extensions, offering a classification that is both comprehensive and computable. We also compute the socle and the center of all the linear cycle sets obtained.
\end{abstract}

%\begin{nouppercase}	
\maketitle
%\end{nouppercase}
	
\tableofcontents
	
\section*{Introduction}	

Motivated by the relevance of the Yang–Baxter equation (or equivalently the braid equation), Drinfeld proposed in~\cite{Dr} the study of set-theoretical solutions, which are closely related to a wide variety of algebraic structures such as affine torsors, solvable groups, Bieberbach groups, groups of I-type, Artin–Schelter regular algebras, Garside structures, biracks, Hopf algebras and left symmetric algebras, among others. See for example~\cites{BCJO, CJO, De1, ESS, GI2, GI4, GIM, R1, R2}. An important class of such solutions is given by the non-degenerate involutive ones, whose study led Rump to introduce linear cycle sets. This notion is equivalent to that of braces and bijective 1-cocycles, and has become a central tool in the structural study of set-theoretic solutions of the Yang–Baxter equation.

A fundamental problem in the theory of linear cycle sets is the description and classification of extensions (see~\cites{B, GGV1, GGV2, LV}. Given a linear cycle set $H$ and an abelian group $I$ endowed with the trivial structure, extensions of $H$ by $I$ can be described in terms of suitable actions $\blackdiamond\colon H \times I \to I$ and $\Yleft\colon I \times H \to I$, together with cohomological data arising from a filtered cochain complex. In this framework, it was proved that equivalence classes of extensions with fixed actions~$\blackdiamond$ and~$\Yleft$, correspond bijectively to the second cohomology group $H^2_{\blackdiamond,\Yleft}(H,I)$, reducing the classification problem to the determination of admissible actions and the computation of this group.

In previous works, several families of extensions have been studied, including the case where the adjoint group is finite abelian and particular examples where explicit computations can be carried out. A particularly relevant situation arises when the ideal $I$ is contained in the socle of the extension, which corresponds to the case $\Yleft = 0$. These extensions, although simpler from a technical point of view, already capture a rich structure and include many examples of interest.

The aim of this paper is to give a complete classification of extensions of linear cycle sets assuming that $H$ and~$I$ are finite cyclic $p$-groups, with $p$ odd or $p=2$ and $|H|\leq 4$. More precisely, we determine all possible actions~$\blackdiamond$ and~$\Yleft$, and explicitly compute the corresponding second cohomology groups, obtaining a full description of the extension classes. This allows us to parameterize all extensions in terms of explicit algebraic data, leading to a classification result that is both complete and computable. We also determine both the socle and the center of all the linear cycle sets obtained.

The paper is organized as follows. In Section~1 we recall the necessary background on linear cycle sets, extensions of linear cycle sets, and cohomology. In Section~2 we determine all the admissible actions $\blackdiamond$ and $\Yleft$ in the cases under consideration. Then, we compute the associated cohomology groups and establish the classification theorems. Likewise, we compute the socle and the center of the constructed linear cycle sets.

We work mainly in the ring $\mathbb{Z}_{p^r}$, of integers modulo $p^r$. Sometimes we use expressions such as $0 \le z < p^{\eta}$. This means that $z$ is the class in $\mathbb{Z}_{p^r}$ of an integer (also denoted $z$) that is greater than or equal to zero and less than $p^{\eta}$. For example, in Remarks~\ref{remark 1.11} and~\ref{remark 1.11'}, we can assume that
$$
0 < k' < p^{r-\beta}, \quad 0 < k'' \le p^{r-q} \quad \text{and} \quad 0 < k''' \le p^{r-q'}.
$$

\paragraph{Precedence of operations} The operations precedence in this paper is the following: the operators with the highest precedence are the unary operations $a\mapsto a^{\times n}$ (where $n\in \mathbb{Z}$) and the binary operation $(a,b)\mapsto {}^ab$; then come the operations $\cdot$ and $\blackdiamond$, that have equal precedence; then the multiplication $(a,b)\mapsto a\times b$; then the operator~$\Yleft$, and finally, the sum. Of course, as usual, this order of precedence can be modified by the use of parenthesis.

\section[Preliminaries]{Preliminaries}\label{Preliminaries}

A \emph{linear cycle set} is an additive abelian group $H$ equipped with a binary operation $\cdot$, such that all left translations
\[
h \mapsto l \cdot h
\]
are bijective, and the following conditions are satisfied:
\begin{equation}\label{compatibilidades cdot suma y suma cdot}
h\cdot (l+m)=h\cdot l + h\cdot m\quad\text{and}\quad (h+l)\cdot m = (h\cdot l)\cdot (h\cdot m)\quad\text{for all $h,l,m\in H$. }
\end{equation}

\subsection[The socle and the center of a linear cycle set]{The socle and the center of a linear cycle set}\label{The socle and the center of a linear cycle set}

Let $H$ be a linear cycle set. An {\em ideal} of $H$ is an additive subgroup $I$ of $H$, such that $h\cdot y \in I$ and $y\cdot h - h\in I$, for all~$h\in H$ and $y\in I$. An ideal $I$ of $H$ is called a {\em central ideal} if $y\cdot h=h$ and $h\cdot y=y$, for all $y\in I$ and~$h\in H$. The {\em socle} of $H$ is the ideal $\Soc(H)$ of all $y\in H$ such that $y\cdot h = h$, for all $h\in H$. The {\em center} of $H$, is the set $\Z(H)$ of all $y\in \Soc(H)$ such that $h\cdot y = y$, for all $h\in H$. The center of $H$ is a central ideal of $H$, and each central ideal of $H$ is included in $\Z(H)$.

\subsection[Extensions of linear cycle sets]{Extensions of linear cycle sets}\label{Extensions of linear cycle sets}

Let $I$ and $H$ be linear cycle sets. An {\em extension} $(\iota,B,\pi)$ of $H$ by $I$ is a short exact sequence
\begin{equation}
    \begin{tikzpicture}
    \begin{scope}[yshift=0cm,xshift=0cm, baseline]
			\matrix(BPcomplex) [matrix of math nodes, row sep=0em, text height=1.5ex, text
			depth=0.25ex, column sep=2.5em, inner sep=0pt, minimum height=5mm, minimum width =6mm]
			{0 & I & B & H & 0,\\};
			\draw[-{latex}] (BPcomplex-1-1) -- node[above=1pt,font=\scriptsize] {} (BPcomplex-1-2);
			\draw[-{latex}] (BPcomplex-1-2) -- node[above=1pt,font=\scriptsize] {$\iota$} (BPcomplex-1-3);
			\draw[-{latex}] (BPcomplex-1-3) -- node[above=1pt,font=\scriptsize] {$\pi$} (BPcomplex-1-4);
			\draw[-{latex}] (BPcomplex-1-4) -- node[above=1pt,font=\scriptsize] {} (BPcomplex-1-5);
    \end{scope}\label{sucesion exacta corta de grupos abelianos}
    \end{tikzpicture}
\end{equation}
of additive abelian groups, in which $B$ is a linear cycle set and both $\iota$ are $\pi$ are linear cycle sets morphisms. Two extensions $(\iota,B,\pi)$ and $(\iota',B',\pi')$ of $H$ by $I$ are {\em equivalent} if there exists a linear cycle set morphism $\phi\colon B\to B'$ such that $\pi'\xcirc \phi=\pi$ and $\phi\xcirc \iota=\iota'$. As in the case of extension of groups, necessarily $\phi$ is an isomorphism.

Assume that~\eqref{sucesion exacta corta de grupos abelianos} is an extension of linear cycle sets and let $s$ be a section of $\pi$ with $s(0)=0$. For the sake of simplicity, in the rest of this section we identify $I$ with $\iota(I)$ and we write $y$ instead of $\iota(y)$. For each~$b\in B$, there exist unique $y\in I$ and $h\in H$ such that $b=y+s(h)$. Let $\blackdiamond\colon H\times I\to I$ and $\Yleft \colon I\times H\to I$ be the maps defined by
$$
h\blackdiamond y \coloneqq s(h)\cdot y\quad\text{and}\quad y\Yleft h\coloneqq y\Yleft s(h) = y\cdot s(h)-s(h),
$$
respectively. From now on we assume that $I$ is trivial. By~\cite{GGV1}*{Proposition~3.3}, this implies that $\blackdiamond$ and $\Yleft$ do not depend on~$s$. Let $\beta\colon H\times H\to I$ and $f\colon H\times H\to I$ be the unique maps such that
\begin{align}
&y+s(h) + z+s(l)= y + z+ s(h) + s(l)= y+z + \beta(h,l) + s(h+l) \label{construccion de beta}\\
\shortintertext{and}
&(y+s(h))\cdot (z+s(l))= h\blackdiamond z + f(h,l) + h\blackdiamond y \Yleft h\cdot l + s(h\cdot l), \label{formula para cdot en I times H}
\end{align}
for $y,z\in I$ and $h,l\in H$.

\begin{notation}\label{notacion ext} Fix maps $\blackdiamond\colon H\times I\to I$ and $\Yleft \colon I\times H\to I$. We let $\Ext_{\blackdiamond,\Yleft}(H;I)$ denote the set of equivalence classes of  extensions $(\iota,B,\pi)$ of $H$ by $I$, such that $s(h)\cdot y = h\blackdiamond y$ and $y\cdot s(h)-s(h) = y\Yleft h$, for every section $s$ of $\pi$ with~$s(0) = 0$.
\end{notation}

\subsection[Building extensions of linear cycle sets]{Building extensions of linear cycle sets}\label{Building extensions of linear cycle sets}
Let $H$ be a linear cycle set and let $I$ be an additive abelian group. Given maps
$$
\blackdiamond\colon H\times I\to I,\quad \Yleft \colon I\times H\to I,\quad \beta\colon H\times H\to I\quad\text{and}\quad f\colon H\times H\to I,
$$
we let $I\times_{\beta,f}^{\blackdiamond,\Yleft} H$ denote $I\times H$, endowed with the binary operations $+$ and $\cdot$ defined by
\begin{align}
&(y+w_h) + (z+w_l) = y+z+\beta(h,l)+ w_{h+l}\label{formula para suma en I times H}\\
\shortintertext{and}
&(y+w_h)\cdot (z+w_l)= h\blackdiamond z + f(h,l) + h\blackdiamond y \Yleft h\cdot l + w_{h\cdot l},\label{formula para cdot en I times H'}
\end{align}
where we are writing $y$ instead of $(y,0)$ and $w_h$ instead of $(0,h)$.

\smallskip

Let $\triangleleft\colon I \times H\to I$ be the map defined by $y\triangleleft h\coloneqq  h \blackdiamond (y - y\Yleft h)$. For each $y\in I$ and $h\in H$, set $y^h\coloneqq y - y\Yleft h$. Consider $I$ endowed with the trivial linear cycle set structure and let $\iota\colon I\to I\times H$ and $\pi\colon I\times H\to H$ be the canonical maps. By the discussion at the beginning of~\cite{GGV1}*{Section~4} and~\cite{GGV1}*{Theorem~5.6 and Remark~5.12} we know that~$(\iota,I\times_{\beta,f}^{\blackdiamond,\Yleft} H, \pi)$ is an extension of linear cycle sets if and only if conditions~(1.12)--(1.19) in~\cite{GGV2}*{Subsection~1.3} are satisfied.

By~\cite{GGV1}*{Remark~5.14} two extensions $(\iota,I\times_{\beta,f}^{\blackdiamond,\Yleft} H,\pi)$ and $(\iota,I\times_{\beta',f'}^{\blackdiamond,\Yleft} H,\pi)$ of $H$ by $I$ are equivalent if and only if there exists a map $\varphi\colon H\to I$ satisfying:
\begin{align}
&\varphi(0) = 0,\qquad \varphi(h)-\varphi(h+l)+\varphi(l)=\beta(h,l) - \beta'(h,l)\label{equivalencia a nivel de qrupos}
\shortintertext{and}
&\varphi(h\cdot l) + f(h,l) = h\blackdiamond \varphi(l) + f'(h,l) + h\blackdiamond \varphi(h)\Yleft h\cdot l,\label{tercera condicion caso I trivial}
\end{align}
for all $h,l\in H$.

\smallskip

By~\cite{GGV1}*{Remark~4.9}, every extension $(\iota,B,\pi)$ of $H$ by $I$ is equivalent to an extension of the form $(\iota,I\times_{\beta,f}^{\blackdiamond,\Yleft} H,\pi)$.

\subsection[Cohomology of linear cycle sets]{Cohomology of linear cycle sets}

Let $H$ be a linear cycle set and let $I$ be an additive abelian group, considered as a trivial linear cycle set. Fix two maps $\blackdiamond\colon H\times I\to I$ and $\Yleft \colon I\times H\to I$. Assume that conditions~(1.12)--(1.14) in~\cite{GGV2}*{Subsection~1.3} are fulfilled.

In~\cite{GGV1}*{Section~7} we constructed a diagram $(\wh{C}_N^{**}(H,I),\partial_{\mathrm{h}},\partial_{\mathrm{v}},D)$. By applying to this diagram the same procedure used to form the total complex of a double complex, we obtain a cochain complex $(\wh{C}_N^*(H,I),\partial+D)$. As was proved in~\cite{GGV1}*{Proposition~7.9 and Corollary~7.10}, the second cohomology group $\Ho^2_{\blackdiamond,\Yleft}(H,I)$ of this complex is canonically isomorphic to $\Ext_{\blackdiamond,\Yleft}(H;I)$. This correspondence sends the cohomology class of a cocycle $(\beta,-f)\in \wh{C}_N^2(H,I)$ to the equivalence class of the extension $(\iota, I\times_{\beta,f}^{\blackdiamond,\Yleft} H,\pi)$.

Let $p$ be a prime number. In Section~2 we are going to classify the extension classes of a finite $p$-cyclic group, equipped with the trivial linear cycle set structure by another finite cyclic $p$-group $I$, considered as a trivial lineal cycle set. For this we will use the results obtained in the following subsection.

\subsection[The cohomology of a finite cyclic trivial cycle sets]{The cohomology of a finite cyclic trivial cycle sets}\label{sub 1.11}

In this subsection we specialize the results obtained in \cite{GGV2}*{Subsections~3.1 and~3.2} to the case $n=s=1$ (in other words, $H$ is a finite cyclic group of order $d$, equipped with the trivial cycle set structure), and examine certain~use\-ful properties for computing the extension classes of $H$ by a trivial cycle set $I$. Recall from~\cite{GGV2}*{Section~1.3} that, in order to build these extensions, we first must determine all the maps
$$
\blackdiamond\colon H\times I\to I\qquad\text{and}\qquad \Yleft\colon I\times H\to I
$$
satisfying~\cite{GGV2}*{conditions~(1.12)--(1.14)}. By~\cite{GGV2}*{Propo\-si\-tion~3.1}, having such a pair $(\blackdiamond,\Yleft)$ is equivalent to have~en\-domorphisms~$A$~and~$B$ of $I$ such that
\begin{equation}\label{condiciones para A y B}
A^d = \Ide,\quad (A-A\xcirc B)^d = \Ide\quad dB = 0, \quad\text{and}\quad B\xcirc A-A\xcirc B = B\xcirc A\xcirc B.
\end{equation}
The operators $\blackdiamond$ and $\Yleft$ are given by
\begin{equation}\label{formula para blackdiamond e Yleft}
h \blackdiamond y = A^hy\qquad\text{and}\qquad y\Yleft h = h B y.
\end{equation}
Furthermore, if
$$
T_1,T_2\colon I\oplus I\to I,\quad T_3\colon I\oplus I\to I^0 = 0\quad\text{and}\quad S\colon I\to I\oplus I,
$$
are as in \cite{GGV2}*{Remark~3.3}, then $e_1=a_1=1$, $T_3$ is the zero map,
\begin{equation}\label{T_1, T_2 y S}
T_1(\gamma,\mathfrak{f}_0) = d\mathfrak{f}_0+(\Ide-A)\gamma,\quad T_2(\gamma,\mathfrak{f}_0) = B\xcirc \sum_{j=1}^{d-1} j A^jf_0 + N(A) f_0 - B\gamma\quad\text{and}\quad  S(t)=(dt,B\xcirc At+At-t),
\end{equation}
where $N(A)\coloneqq \sum_{\ell=0}^{d-1}A^{\ell}$.

\smallskip

Let $\alpha_{\gamma}$ be as \cite{GGV2}*{Remark~2.7} and let $f_{\mathfrak{f}_0}$ be as the beginning of \cite{GGV2}*{Subsection~3.2}. For $0\le h,h'<d$, we have
\begin{equation}\label{alpha y f}
\alpha_{\gamma}(h,h') = \begin{cases} \gamma & \text{if $h+h'\ge d$,}\\ 0 & \text{if $h+h'<d$,}\end{cases}\qquad\text{and}\qquad f_{\mathfrak{f}_0}(h,h') = \sum_{j=0}^{h-1} h' A^{h-j-1} \mathfrak{f}_0 + \sum_{j=1}^{h-1} j h' B\xcirc A^j \mathfrak{f}_0,
\end{equation}
where for the computation of $f_{\mathfrak{f}_0}$, we use \cite{GGV2}*{Theorem~3.4}. Let
\begin{equation*}
T\colon \ker(T_1)\cap \ker(T_2)\to \wh{C}^{02}_N(H,I)\oplus \wh{C}^{11}_N(H,I)
\end{equation*}
be the linear map given by $T(\gamma,\mathfrak{f}_0) \coloneqq \bigl(\alpha_{\gamma},-f_{\mathfrak{f}_0}\bigr)$. By \cite{GGV2}*{Corollary~2.35}, we know that $T$ induces an isomorphism
\begin{equation}\label{formula T barra}
\ov{T}\colon \frac{\ker(T_1)\cap \ker(T_2)}{\ima(S)}\longrightarrow \Ho^2_{\blackdiamond,\Yleft}(H,I).
\end{equation}

\subsubsection[Some computations in two particular cases]{Some computations in two particular cases}\label{two particular cases}

\begin{enumerate}

\smallskip

\item[a)] Assume there exists $c\in \mathbb{Z}$ such that
\begin{equation}\label{paco1}
c(B\xcirc A+A-\Ide)=d\Ide.
\end{equation}
We define $\Phi\colon I^2\to I^2$ by $\Phi(x,y)\coloneqq (x+cy,y)$, and we set
\begin{equation}\label{wt T1, wt T2 y wt T3 caso a}
\wt{T}_1\coloneqq T_1\xcirc \Phi,\quad \wt{T}_2\coloneqq T_2\xcirc \Phi \quad\text{and}\quad \wt{S}\coloneqq \Phi^{-1}\xcirc S.
\end{equation}
Clearly $\Phi$ induces an isomorphism
\begin{equation}\label{Phi barra1'}
\ov{\Phi}\colon \frac{\ker \wt{T}_1\cap \ker \wt{T}_2}{\ima \wt{S}}\longrightarrow \frac{\ker T_1\cap \ker T_2}{\ima S}\simeq \Ho^2_{\blackdiamond,\Yleft}(H,I).
\end{equation}
A straightforward computation using the equalities~\eqref{T_1, T_2 y S} and~\eqref{paco1} yields
\begin{align}
& \wt{T}_1(x,y)=(\Ide-A)x+cB\xcirc Ay,\label{eq2}\\
&\wt{T}_2(x,y)=B\xcirc\sum_{j=1}^{d-1} j A^jy+N(A)y-cBy-Bx\label{eq3}\\
\shortintertext{and}
& \wt{S}(t)=(0,B\xcirc At+At-t).\label{eq4}
\end{align}
Moreover, from~\eqref{condiciones para A y B} and~\eqref{paco1}, we obtain that
\begin{equation}\label{pepito1}
c(B\xcirc A-B) = c(B\xcirc A\xcirc B+A\xcirc B-B) = dB = 0.
\end{equation}
Hence,
\begin{equation}\label{pepit1}
c(B+A-\Ide) = c(B\xcirc A+A-\Ide) = d\Ide,
\end{equation}
which implies
$$
cB+cA = (c+d)\Ide.
$$
Using this and the fact that $dB = 0$, we obtain
$$
cB^2 + cA\xcirc B = (c+d)B = cB \quad\text{and}\quad cB^2 + cB\xcirc A = (c+d)B = cB,
$$
which combined with~\eqref{pepito1}, gives
\begin{equation}\label{eq5}
cB\xcirc A = cA\xcirc B = cB\quad\text{and}\quad cB^2 = 0.
\end{equation}

\smallskip

\item[b)] Assume there exists $c\in \mathbb{Z}$ such that \begin{equation}\label{paco2}
B\xcirc A+A-\Ide = cd\Ide.
\end{equation}
We define $\Phi\colon I^2\to I^2$ by $\Phi(x,y)\coloneqq (x,y+cx)$, and we set
\begin{equation}\label{wt T1, wt T2 y wt T3 caso b}
\wt{T}_1\coloneqq T_1\xcirc \Phi,\quad \wt{T}_2\coloneqq T_2\xcirc \Phi\quad\text{and}\quad \wt{S}\coloneqq \Phi^{-1}\xcirc S.
\end{equation}
Clearly $\Phi$ induces an iso\-morphism
\begin{equation}\label{Phi barra'}
\ov{\Phi}\colon \frac{\ker \wt{T}_1\cap \ker \wt{T}_2}{\ima \wt{S}}\longrightarrow \frac{\ker T_1\cap \ker T_2}{\ima S}\simeq \Ho^2_{\blackdiamond,\Yleft}(H,I).
\end{equation}
A straightforward computation using the equalities~\eqref{T_1, T_2 y S} and~\eqref{paco2} yields
\begin{align}
& \wt{T}_1(x,y)=B\xcirc Ax+dy,\label{eqq2'}\\
&\wt{T}_2(x,y)=B\xcirc \sum_{j=1}^{d-1} j A^jy+N(A)y+cB\xcirc \sum_{j=1}^{d-1} j A^jx+cN(A)x-Bx\label{eqq3'}\\
\shortintertext{and}
& \wt{S}(t)=(dt,0).\label{eqq4'}
\end{align}
Moreover, from~\eqref{condiciones para A y B} and~\eqref{paco2}, we obtain that
\begin{equation}\label{pepito2}
B\xcirc A-B = B\xcirc A\xcirc B+A\xcirc B-B = cdB = 0.
\end{equation}
Hence,
\begin{equation*}
B+A-\Ide = B\xcirc A+A-\Ide = cd\Ide,
\end{equation*}
which implies
\begin{equation}\label{pepito4}
B+A = (cd+1)\Ide.
\end{equation}
Using this and the fact that $dB = 0$, we obtain
$$
B^2 + A\xcirc B = (cd+1)B = B \quad\text{and}\quad B^2 + B\xcirc A = (cd+1)B = B,
$$
which combined with~\eqref{pepito2}, gives
\begin{equation}\label{pepito3}
B\xcirc A = A\xcirc B = B\quad\text{and}\quad B^2 = 0.
\end{equation}
Using this and~\eqref{pepito4}, we obtain
$$
jB\xcirc A^j+A^j = jB\xcirc A^{j-1}+A^j = (B+A)^j = (cd+1)^j\Ide = \sum_{\ell=0}^j \binom{j}{\ell} c^{\ell}d^{\ell}\Ide,
$$
which implies that
$$
\qquad B\xcirc \sum_{j=1}^{d-1} j A^j+N(A) = \sum_{j=0}^{d-1} \sum_{\ell=0}^j \binom{j}{\ell} c^{\ell}d^{\ell}\Ide = \sum_{\ell=0}^{d-1} \sum_{j=\ell}^{d-1} \binom{j}{\ell} c^{\ell}d^{\ell}\Ide = \sum_{\ell=0}^{d-1} \binom{d}{\ell+1} c^{\ell}d^{\ell}\Ide.
$$
By this and~\eqref{pepito3}, the formulas for $\wt{T}_1$, $\wt{T}_2$ and $\wt{S}$ become
\begin{equation}\label{pepito5}
\wt{T}_1(x,y)=Bx+dy,\quad \wt{T}_2(x,y)=\sum_{\ell=0}^{d-1} \binom{d}{\ell+1} c^{\ell}d^{\ell}(y+cx) - Bx\quad\text{and}\quad \wt{S}(t) = (dt,0).
\end{equation}

\end{enumerate}

\subsection{A technical lemma}

Here, we establish a lemma needed to compute the operation $A$ when $H\coloneqq \mathbb{Z}_{p^{\eta}}$, endowed with the trivial cycle set structure, and $I\coloneqq \mathbb{Z}_{p^r}$.

\begin{lemma}\label{para ejemplos} Let $p\in \mathbb{N}$ be an odd prime and let $r\ge 1$. Let $0\le a<p^r$ and $\eta\in \mathbb{N}$. Then
\begin{equation*}
a^{p^{\eta}}\equiv 1\pmod{p^r} \Longleftrightarrow a^{p^{\eta_0}}\equiv 1\pmod{p^r} \Longleftrightarrow  a\equiv 1\pmod{p^{r-\eta_0}},
\end{equation*}
where $\eta_0\coloneqq \min(r-1,\eta)$.
\end{lemma}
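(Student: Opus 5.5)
The plan is to reduce everything to the structure of the unit group $(\mathbb{Z}/p^r\mathbb{Z})^\times$, which for $p$ odd is cyclic of order $p^{r-1}(p-1)$. Two standard facts carry the argument. First, its subgroup of elements of $p$-power order is exactly $U_1\coloneqq\{a: a\equiv 1\pmod p\}$, cyclic of order $p^{r-1}$. Second, for $0\le k\le r-1$, the unique subgroup of $U_1$ of order $p^k$ is $U_{r-k}\coloneqq\{a: a\equiv 1\pmod{p^{r-k}}\}$. I would recall or prove the second fact by the lifting-the-exponent lemma: if $a=1+p^{j}u$ with $j\ge1$ and $p\nmid u$, then $a^{p}=1+p^{j+1}u'$ with $p\nmid u'$. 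This uses $p$ odd, since the binomial term $\binom p2 p^{2j}u^2$ is divisible by $p^{2j+1}$, and hence by $p^{j+2}$.

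With these in hand, the proof runs through the following steps.

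1. Suppose $a^{p^\eta}\equiv1\pmod{p^r}$. Then $a$ is a unit, and its order divides $p^\eta$.
2. Its order also divides $|(\mathbb{Z}/p^r)^\times|=p^{r-1}(p-1)$. Hence it divides $\gcd(p^\eta,p^{r-1}(p-1))=p^{\eta_0}$, so $a^{p^{\eta_0}}\equiv1$. This gives the first forward implication.
3. Conversely, if $a^{p^{\eta_0}}\equiv 1$, then $a^{p^\eta}\equiv1$ because $\eta_0\le\eta$.
4. For the second equivalence, note that $a^{p^{\eta_0}}\equiv1$ means $a$ lies in the subgroup of order dividing $p^{\eta_0}$. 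Since $U_1$ is cyclic of order $p^{r-1}$ and $\eta_0\le r-1$, that subgroup has order exactly $p^{\eta_0}$.
5. By the second fact, this subgroup equals $U_{r-\eta_0}$, i.e. it is the set of $a$ with $a\equiv1\pmod{p^{r-\eta_0}}$.

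The edge case $\eta_0=0$ needs no separate treatment: the claim $a\equiv1\pmod{p^r}$ is trivially consistent with the above.

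If I want to avoid citing cyclicity, I can get the second equivalence directly from lifting the exponent.

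1. If $a\equiv1\pmod{p^{r-\eta_0}}$, then applying the lemma $\eta_0$ times gives $a^{p^{\eta_0}}\equiv1\pmod{p^r}$.
2. Conversely, suppose $a^{p^{\eta_0}}\equiv 1$. Then $a^{p^{r-1}}\equiv1$, and since $a^{p-1}\equiv 1\pmod p$ by Fermat, $a^{p^{r-1}}\equiv a\pmod p$. Hence $a\equiv1\pmod p$.
3. Write $a=1+p^j u$ with $p\nmid u$ and $j\ge1$ (if $a=1$ there is nothing to show). Then $a^{p^{\eta_0}}=1+p^{j+\eta_0}u''$ with $p\nmid u''$, and this being $\equiv1\pmod{p^r}$ forces $j\ge r-\eta_0$.

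The step needing the most care is the lifting-the-exponent computation. There I must check that every higher binomial term $\binom pi p^{ij}u^i$ with $i\ge2$ is divisible by $p^{j+2}$, using $p$ odd so that $p\mid\binom p2$ (the term $i=p$ needs $pj\ge j+2$, which holds since $p\ge3$ and $j\ge1$). The rest is formal bookkeeping with $\eta_0=\min(r-1,\eta)$.
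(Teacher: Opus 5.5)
Your proposal is correct. The first equivalence is argued as in the paper: by Euler's theorem the order of $a$ divides $\gcd(p^{\eta},(p-1)p^{r-1})=p^{\eta_0}$.

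For the second equivalence the routes differ. The paper fixes a generator $s$ of $U(\mathbb{Z}_{p^r})$, writes $\overline{a}=s^l$, and notes that $a^{p^{\eta_0}}\equiv 1\pmod{p^r}$ is equivalent to $(p-1)p^{r-1-\eta_0}\mid l$. Since the reduction of $s$ generates $U(\mathbb{Z}_{p^{r-1}})$, the same divisibility is equivalent to $a^{p^{\eta_0-1}}\equiv 1\pmod{p^{r-1}}$. Repeating this descent $\eta_0$ times gives $a\equiv 1\pmod{p^{r-\eta_0}}$.

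You instead identify the $p^{\eta_0}$-torsion directly with $U_{r-\eta_0}$. Once cyclicity is granted, this needs no lifting-the-exponent at all: $U_{r-\eta_0}$ is visibly a subgroup of order $p^{\eta_0}$, and a cyclic group has only one subgroup of each order.

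Your second variant (Fermat plus $v_p(a^{p^k}-1)=v_p(a-1)+k$) is more elementary. It does not use the existence of primitive roots modulo $p^r$, whose usual proof rests on the same valuation computation anyway. It also shows exactly where $p$ odd is used.

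What the cyclicity-based arguments (the paper's and your first one) buy in return is that they carry over verbatim to $p=2$, $r\le 2$, as claimed in Remark~\ref{complemento para ejemplos}. Your valuation lemma is false for $p=2$, $j=1$ (e.g.\ $v_2(3^2-1)=3$), so that case would have to be checked separately by hand, which is trivial.
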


\begin{proof} If $a\notin U(\mathbb{Z}_{p^r})$, none of the conditions are met. Else, by Euler Theorem, $a^{(p-1)p^{r-1}}\equiv 1\pmod{p^r}$. Consequently, $a^{p^{\eta}}\equiv 1\pmod{p^r}$ if and only if $a^{p^{\eta_0}}\equiv 1\pmod{p^r}$. When $r=1$, this finishes the proof. Hence, we can assume that $r>1$. Let~$s$ be a generator of $U(\mathbb{Z}_{p^r})$ and write $\ov{a} = s^l$, where $\ov{a}$ is the class of $a$ in $\mathbb{Z}_{p^r}$. Note that $a^{p^{\eta_0}} \equiv 1 \pmod{p^r}$ if and only if $(p-1)p^{r-1-\eta_0}\mid l$. Let $\hat{a}$ and $\hat{s}$ be the classes in $\mathbb{Z}_{p^{r-1}}$, of $\ov{a}$ and $s$, respectively. Since $\hat{s}$ is a generator of $U(\mathbb{Z}_{p^{r-1}})$ and $\hat{a} = \hat{s}^l$, the same argument as above proves that $(p-1)p^{(r-2)-(\eta_0-1)}= (p-1)p^{r-1-\eta_0}\mid l$ if and only if $a^{p^{\eta_0-1}}\equiv 1\pmod{p^{r-1}}$. An inductive argument concludes the proof.
\end{proof}

\begin{remark}\label{complemento para ejemplos} The previous lemma and its proof hold for $p=2$ when $1\le r\le 2$ (because then $U(\mathbb{Z}_{2^r})$ is cyclic).
\end{remark}

We also will use the following remark.

\begin{remark}\label{valuacion de p en combinatorio} Let $w,w'\in \mathbb{N}$ and write $w = p^tv$ and $w'=p^{t'}v'$ with $p\nmid v$ and $p\nmid v'$. Suppose that $t'\ge t$. Since
$$
\binom{p^{t'}v'}{p^tv} = \frac{p^{t'}v'}{p^tv} \binom{p^{t'}v'-1}{p^tv-1} = p^{t'-t} \frac{v'}{v} \binom{p^{t'}v'-1}{p^tv-1},
$$
we obtain that $p^{t'-t}\mid \binom{w'}{w}$.
\end{remark}

\section[The cohomology of trivial cyclic linear cycle sets of order \texorpdfstring{$p^{\eta}$}{pn}]{The cohomology of trivial cyclic linear cycle sets of order \texorpdfstring{$\pmb{p^{\eta}}$}{pn}}
Let $p,\eta,r\in \mathbb{N}$, with $p$ a prime number, and let $H\coloneqq \mathbb{Z}_{p^{\eta}}$, equipped with the trivial linear cycle set structure. In this section we explicitly compute all extension classes of $H$ by the finite cyclic $p$-group $I\coloneqq \mathbb{Z}_{p^r}$, considered as a trivial lineal cycle set. To achieve this, we first determine all operations $\blackdiamond$ and $\Yleft$ satisfying conditions~(1.12)--(1.14) in~\cite{GGV2}, which -according to the remarks at the beginning of Subsection~\ref{sub 1.11}- is equivalent to determining all endomor\-phisms~$A,B\colon I\to I$ satisfying the conditions specified in~\eqref{condiciones para A y B} with $d\coloneqq p^{\eta}$. Then, for each such pair~$(A,B)$,~we~com\-pute the corresponding cohomology group $\ho_{\blackdiamond,\Yleft}^2(H,I)$. To perform this final step, we use either~\eqref{Phi barra1'} or~\eqref{Phi barra'} to obtain a family of pairs $(\gamma,\mathfrak{f}_0)\in \ker T_1\cap \ker T_2$, where $T_1$ and $T_2$ are as in~\eqref{T_1, T_2 y S}. This family parameterizes a complete set of representatives of $2$-cocycles modulo coboundaries in $\wh{\mathcal{C}}_N^*(H,I)$, via the map
$$
T(\gamma,\mathfrak{f}_0) = (\alpha_{\gamma},-f_{\mathfrak{f}_0}).
$$
By the formulas~\eqref{formula para suma en I times H} and~\eqref{formula para cdot en I times H'} this is sufficient to explicitly construct the extension
$$
(\iota,I\times_{\alpha_{\gamma},-f_{\mathfrak{f}_0}}^{\blackdiamond,\Yleft} H,\pi).
$$
In each case, the parame\-ter family $(\gamma,\mathfrak{f}_0)$ corresponds bijectively to a set of pairs $(z_1,z_2)$, where $z_1$ and $z_2$ belong to subquotients of~$\mathbb{Z}_{p^r}$. Since~$I\coloneqq \mathbb{Z}_{p^r}$, there exist $a,b\in \mathbb{Z}_{p^r}$, such that the endomorphisms $A$ and $B$ are given mul\-tiplication by $a$ and~$b$, respectively. Therefore
\begin{equation}\label{pep2pr}
h\blackdiamond y = a^h y\qquad\text{and}\qquad y\Yleft h = b hy.
\end{equation}
Note that, since $ab=ba$, the conditions in~\eqref{condiciones para A y B} are satisfied if and only if
\begin{equation}\label{pep1}
a^{p^{\eta}} = 1,\quad p^{\eta} b = 0\quad\text{and}\quad b^2 = 0.
\end{equation}
By the simplicity of the formula for $\alpha_{\gamma}$ (see~\eqref{alpha y f}), we will focus on determining only $\gamma$, rather than computing~$\alpha_{\gamma}$. In many cases, the formula for $f_{\mathfrak{f}_0}$ can be simplified. Throughout all the computations we will assume that $0\le h< p^{\eta}$ for every $h\in H$.

\smallskip

%\begin{note} For the sake of simplicity in all cases we restrict ourselves to expressing the representative~$(\gamma,\mathfrak{f}_0)$ as a function of parameters~$z_1$ and $z_2$. For example, the phrase ``In this case $(\gamma,\mathfrak{f}_0)\in\{(p^{r-\eta}z_1,z_2):0\le z_1,z_2<p^{\eta}\}$'' should be interpreted as ``In this case a complete family $(\alpha_{\gamma},f_{\mathfrak{f}_0})$, of representatives of $2$-cocycles modulo coboundaries in $\wh{\mathcal{C}}_N^*(H,I)$, is parameterized by the pairs $(\gamma,\mathfrak{f}_0)\in \{(p^{r-\eta}z_1,z_2):0\le z_1,z_2<p^{\eta}\}$''. Subsections~\ref{subsection 2.1} and~\ref{subsection 2.1} must be readed taking into account this note.
%\end{note}

For the sake of brevity, from now on we write $E^{\blackdiamond,\Yleft}_{\gamma,\mathfrak{f}_0}$ instead of $I\times_{\alpha_{\gamma},-f_{\mathfrak{f}_0} }^{\blackdiamond,\Yleft} H$. Moreover, in the remainder of the paper, we will freely use the notations introduced in the following remarks.

\begin{remark}\label{remark 1.11} Assume for a moment that $b\ne 0$ and write $b = p^{\beta} k'$ with $0\le \beta<r$ and $k'\in \mathbb{Z}_{p^r}$ such that $p\nmid k'$. Clearly the second and third conditions in~\eqref{pep1} are satisfied if and only if $r\le\min(2\beta,\beta+\eta)$. When~$b=0$, we set~$\beta\coloneqq r$ and $k'\coloneqq 1$.
\end{remark}

\begin{remark}\label{remark 1.11'} We define $q$ and $q'$ as the largest integers such that
$$
q,q'\le r, \quad p^q\mid ba+a-1\text{ in }\mathbb{Z}_{p^r}\quad\text{and}\quad p^{q'}\mid ba-a+1\text{ in }\mathbb{Z}_{p^r}.
$$
Moreover we write
$$
ba+a-1 = p^qk''\quad\text{and}\quad ba-a+1 = p^{q'}k''',
$$
where $k'',k'''\in \mathbb{Z}_{p^r}^{\times}$. Note that if $a=1$, then $q=q'=\beta$ and $k''=k'''=k'$. Finally, we set $N(a)\coloneqq \sum_{i=0}^{p^{\eta}} a^i$.
\end{remark}

\subsection[Case \texorpdfstring{$H=\mathbb{Z}_{p^{\eta}}$}{H=Zpn} and \texorpdfstring{$I=\mathbb{Z}_{p^r}$}{I=Zpr} with \texorpdfstring{$r\le \eta$}{r<=n} if \texorpdfstring{$p$}{p} is odd, and \texorpdfstring{$r\le \min(2,\eta)$}{r<= min(2,n)} if \texorpdfstring{$p=2$}{p=2}]{Case \texorpdfstring{$\pmb{H=\mathbb{Z}_{p^{\eta}}}$}{H=Zpn} and \texorpdfstring{$\pmb{I=\mathbb{Z}_{p^r}}$}{I=Zpr} with \texorpdfstring{$\pmb{r\le \eta}$}{r<=n} if \texorpdfstring{$\pmb{p}$}{p} is odd, and \texorpdfstring{$\pmb{r\le \min(2,\eta)}$}{r<= min(2,n)} if \texorpdfstring{$\pmb{p=2}$}{p=2}}\label{subsection 2.1}

Let $H\coloneqq\left(\mathbb{Z}_{p^{\eta}},+\right)$, equipped with the trivial cycle set structure, let $I\coloneqq \mathbb{Z}_{p^r}$ and let $\blackdiamond\colon H\times I\to I$ and $\Yleft\colon I\times H\to I$ be two maps such that conditions~(1.12)--(1.14) in~\cite{GGV2} are satisfied. Assume $r\le \eta$ if $p$ is odd, and $r\le \min(2,\eta)$ if~$p=2$. Let $a,b\in \mathbb{Z}_{p^r}$ be as in~\eqref{pep2pr}. By Lemma~\ref{para ejemplos} and Remarks~\ref{complemento para ejemplos} and~\ref{remark 1.11}, we know that
$$
a = 1+pk\quad\text{and}\quad b=p^{\beta}k'\qquad\text{with $\beta\le r\le 2\beta$ and $p\nmid k'$.}
$$
If $a\ne 1$, then we write $k=p^us$ with $0\le u<r-1$ and $p\nmid s$. When $a=1$ we set $u\coloneqq r-1$ and $s\coloneqq 1$. Note that
\begin{equation}\label{accion1 ej 4'pr}
h\blackdiamond y = (1+pk)^h y = \sum_{i=0}^h \binom{h}{i} p^i k^i y.
\end{equation}
Let $(\gamma,\mathfrak{f}_0)\in \ker T_1\cap \ker T_2$ and let $f_{\mathfrak{f}_0}$ be as in~\eqref{alpha y f}. For $0\le h,h'<p^{\eta}$, we have
\begin{equation}\label{cociclo1 ej 3'}
\begin{aligned}
f_{\mathfrak{f}_0}(h,h') & = \sum_{j=0}^{h-1} (1+pk)^j \mathfrak{f}_0h'+ \sum_{j=0}^{h-1} j p^{\beta}k'(1+pk)^j \mathfrak{f}_0h'\\
& = \sum_{j=0}^{h-1} \sum_{\ell=0}^j\binom{j}{\ell} p^{\ell}k^{\ell}\mathfrak{f}_0h' + p^{\beta}k'\sum_{j=0}^{h-1} \sum_{\ell=0}^j j\binom{j}{\ell} p^{\ell}k^{\ell}\mathfrak{f}_0h'\\
& = \sum_{\ell=0}^{h-1} \sum_{j=\ell}^{h-1} \binom{j}{\ell} p^{\ell}k^{\ell}\mathfrak{f}_0h' + p^{\beta}k'\sum_{\ell=0}^{h-1} \sum_{j=\ell}^{h-1} j \binom{j}{\ell} p^{\ell}k^{\ell}\mathfrak{f}_0h'\\
& = \sum_{\ell=0}^{h-1} \binom{h}{\ell+1} p^{\ell}k^{\ell} \mathfrak{f}_0h'+p^{\beta}k'\sum_{\ell=0}^{h-1}\left((h-1) \binom{h}{\ell+1} - \binom{h}{\ell+2}\right) p^{\ell}k^{\ell}\mathfrak{f}_0h',
\end{aligned}
\end{equation}
where in the last equality we have use \cite{Spivey}*{identity~73}.

\begin{theorem}\label{prop 4.10} Assume that we are under the hypotheses established at the beginning of this subsection. A complete family $(\alpha_{\gamma},f_{\mathfrak{f}_0})$, of representatives of $2$-cocycles modulo coboundaries in $\wh{\mathcal{C}}_N^*(H,I)$, is parameterized by the pairs
$$
(\gamma,\mathfrak{f}_0)\in\{(p^{r-\min(u+1,\beta)}z_1,z_2)\in \mathbb{Z}_{p^r}\oplus \mathbb{Z}_{p^r}:0\le z_1<p^{\min(u+1,\beta)}\text{ and } 0\le z_2< p^q\},
$$
where $q$ is as in Remark~\ref{remark 1.11'}.
\end{theorem}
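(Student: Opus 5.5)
The plan is to compute the quotient $\frac{\ker T_1\cap\ker T_2}{\ima S}$ of~\eqref{formula T barra} directly from~\eqref{T_1, T_2 y S}, with $d=p^{\eta}$, $A$ the multiplication by $a=1+pk$ and $B$ the multiplication by $b=p^{\beta}k'$. No change of variables $\Phi$ should be needed. The key simplification is that $r\le\eta$, so $d=p^{\eta}$ acts as zero on $I=\mathbb{Z}_{p^r}$. Consequently
$$
T_1(\gamma,\mathfrak{f}_0)=(1-a)\gamma=-p^{u+1}s\gamma,\qquad S(t)=(0,(ba+a-1)t)=(0,p^qk''t),
$$
and
$$
T_2(\gamma,\mathfrak{f}_0)=C\mathfrak{f}_0-b\gamma,\qquad\text{where } C\coloneqq b\sum_{j=1}^{d-1}ja^j+N(a).
$$
Here $N(a)$ denotes the sum $\sum_{\ell=0}^{d-1}a^{\ell}$ appearing in~\eqref{T_1, T_2 y S}.

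The main step, and the one I expect to be the real obstacle, is to show that $C=0$ in $\mathbb{Z}_{p^r}$. For this I will use the binomial expansions already obtained in~\eqref{cociclo1 ej 3'}, taken at $h=p^{\eta}$. They give
$$
N(a)=\sum_{\ell=0}^{d-1}\binom{d}{\ell+1}p^{\ell}k^{\ell}
\quad\text{and}\quad
b\sum_{j}ja^j=p^{\beta}k'\sum_{\ell}\Bigl((d-1)\binom{d}{\ell+1}-\binom{d}{\ell+2}\Bigr)p^{\ell}k^{\ell}.
$$
By Remark~\ref{valuacion de p en combinatorio}, $v_p\binom{p^{\eta}}{m}\ge \eta-v_p(m)$.
\begin{itemize}
\item For the first sum, the $\ell$-th term has valuation at least $\eta-v_p(\ell+1)+\ell\ge\eta\ge r$, since $\ell+1\ge p^{v_p(\ell+1)}\ge v_p(\ell+1)+1$.
\item For the second sum it suffices to get valuation at least $r-\beta$, because of the prefactor $p^{\beta}$. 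Here $r-\beta\le r/2$, since $r\le 2\beta$. The same estimate handles every term except the one with $\ell=0$ and $p=2$. In that case $v_2\binom{2^{\eta}}{2}=\eta-1$, and I need $\eta-1\ge r-\beta$. This holds trivially when $r=1$, where $r-\beta=0$, and follows from $\eta-1\ge r-1\ge r/2$ when $r=2$.
\end{itemize}
Hence $C=0$.

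With $C=0$, the cocycle conditions become $p^{u+1}\gamma=0$ and $p^{\beta}\gamma=0$, and they impose nothing on $\mathfrak{f}_0$. Thus
$$
\ker T_1\cap\ker T_2=p^{\,r-\min(u+1,\beta)}\mathbb{Z}_{p^r}\oplus\mathbb{Z}_{p^r}.
$$
Since $k''$ is a unit, $\ima S=0\oplus p^q\mathbb{Z}_{p^r}$. The quotient is therefore $p^{r-\min(u+1,\beta)}\mathbb{Z}_{p^r}\oplus\mathbb{Z}_{p^r}/p^q\mathbb{Z}_{p^r}$. A complete set of representatives is then given by the pairs $(p^{r-\min(u+1,\beta)}z_1,z_2)$ with $0\le z_1<p^{\min(u+1,\beta)}$ and $0\le z_2<p^q$. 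Transporting these representatives through the isomorphism $\ov{T}$ yields the asserted family $(\alpha_\gamma,f_{\mathfrak{f}_0})$.

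Finally, I will check the degenerate cases $a=1$ and $b=0$, where the conventions $u=r-1$ and $\beta=r$ are used. In these cases the same computation goes through verbatim.
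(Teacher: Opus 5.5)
Your proposal is correct and follows essentially the same route as the paper: use $p^{\eta}=0$ on $\mathbb{Z}_{p^r}$ to get $T_1(\gamma,\mathfrak{f}_0)=-pk\gamma$ and $S(t)=(0,p^qk''t)$, expand $N(a)+b\sum_j ja^j$ binomially, and kill every term with Remark~\ref{valuacion de p en combinatorio} so that $T_2(\gamma,\mathfrak{f}_0)=-b\gamma$. The only difference is bookkeeping in that last step: the paper writes $p^{\beta}\binom{p^{\eta}}{\ell+2}p^{\ell}=p^{\beta-1}\binom{p^{\eta}}{\ell+2}p^{\ell+1}$ and reuses the single estimate $\binom{p^{\eta}}{m+1}p^{m}\equiv 0 \pmod{p^r}$ together with $\beta\ge 1$, which avoids your separate check of the $p=2$, $\ell=0$ term.
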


\begin{proof} Our aim is to determine a complete set of representatives of $\ker T_1\cap \ker T_2$~mod\-ule~$\ima S$. To begin with, note that, since $r\le \eta$, we have $p^{\eta}\mathfrak{f}_0 = 0$. Hence, from~\eqref{T_1, T_2 y S} we conclude that
$$
T_1(\gamma,\mathfrak{f}_0) = \gamma-a\gamma = - pk \gamma\quad\text{and}\quad S(t) = (0,p^qk''t).
$$
To compute $T_2$, arguing as in~\eqref{cociclo1 ej 3'}, we obtain that
$$
N(a)+b\sum_{j=1}^{p^{\eta}-1} j a^j = \sum_{\ell=0}^{p^{\eta}-1} \binom{p^{\eta}}{\ell+1} p^{\ell}k^{\ell} + p^{\beta}k'(p^{\eta}-1) \sum_{\ell=0}^{p^{\eta}-1} \binom{p^{\eta}}{\ell+1}p^{\ell}k^{\ell} - p^{\beta-1}k'\sum_{\ell=0}^{p^{\eta}-2} \binom{p^{\eta}}{\ell+2} p^{\ell+1}k^{\ell}.
$$
Write $\ell+1 = p^tv$ with $p\nmid v$ and $t\ge 0$. By Remark~\ref{valuacion de p en combinatorio}, we known that $p^{\eta-t}\mid \binom{p^{\eta}}{\ell+1}$. Therefore, the exponent of $p$ in~$\binom{p^{\eta}}{\ell+1} p^{\ell}$, is at least $\eta-t+\ell$. Since $\ell\ge t$ and $r\le \eta$, we have $\eta-t+\ell\ge \eta\ge r$, and hence $\binom{p^{\eta}}{\ell+1} p^{\ell}\equiv 0\pmod{p^r}$. Consequently,
$$
T_2(\gamma,\mathfrak{f}_0) = - B\gamma = - p^{\beta}k'\gamma.
$$
Thus,
\begin{equation}\label{para homologia 0}
\frac{\ker T_1\cap \ker T_2}{\ima S} =  p^{r-\min(u+1,\beta)}\mathbb{Z}_{p^r}\oplus\frac{\mathbb{Z}_{p^r}}{p^q\mathbb{Z}_{p^r}}.
\end{equation}
By this and~\eqref{formula T barra} the assertions in the statement are true.
\end{proof}

\begin{remark}\label{reducciones para f, etc si a=1} The formulas for $\blackdiamond$, $\Yleft$ and $f_{\mathfrak{f}_0}$ were presented in~\eqref{pep2pr}, \eqref{accion1 ej 4'pr} and~\eqref{cociclo1 ej 3'}. When $a=1$, we have:
$$
h\blackdiamond y = y\quad\text{and}\quad f_{\mathfrak{f}_0}(h,h')=\mathfrak{f}_0 hh'+ b\mathfrak{f}_0 \binom{h}{2}h'\qquad\text{for $0\le h,h'<p^{\eta}$.}
$$
\end{remark}

\begin{remark} In the proof of Theorem~\ref{prop 4.10}, we were able to determine the group structure of $\Ho^2_{\blackdiamond,\Yleft}(H,I)$. Specifically, by equality~\eqref{para homologia 0} we have
$$
\Ho^2_{\blackdiamond,\Yleft}(H,I)\simeq \mathbb{Z}_{p^{\min(u+1,\beta)}}\oplus \mathbb{Z}_{p^q}.
$$
\end{remark}

\subsubsection[The underlying additive group of the linear cycle set \texorpdfstring{$E^{\blackdiamond,\Yleft}_{\gamma,\mathfrak{f}_0}$}{E}]{The underlying additive group of the linear cycle set \texorpdfstring{$\pmb{E^{\blackdiamond,\Yleft}_{\gamma,\mathfrak{f}_0}}$}{E}}

We let $U\bigl(E^{\blackdiamond,\Yleft}_{\gamma,\mathfrak{f}_0}\bigr)$ denote the underlying additive group of the linear cycle set $E^{\blackdiamond,\Yleft}_{\gamma,\mathfrak{f}_0}$.

\begin{remark} Assume that we are under the assumptions of Theorem~\ref{prop 4.10} and write
$$
\ov{\beta}\coloneqq \min(\beta,u+1)\quad\text{and}\quad u_1\coloneqq \begin{cases} \text{the largest integer $k$ such that $p^k \mid z_1$} &\text{if $z_1\ne 0$,}\\ \ov{\beta} &\text{if $z_1 = 0$.}\end{cases}
$$
A direct computation shows that $p^{\eta+\ov{\beta}-u_1}$ is the order of $w_1\in U\bigl(E^{\blackdiamond,\Yleft}_{\gamma,\mathfrak{f}_0}\bigr)$. Since $\eta+\ov{\beta}-u_1\ge r$, we have
$$
p^{\eta+\ov{\beta}-u_1}(y+w_h) = p^{\eta+\ov{\beta}-u_1}y + p^{\eta+\ov{\beta}-u_1}w_h = 0\quad\text{for all $y\in I$ and $h\in H$.}
$$
Hence, the exponent of $U\bigl(E^{\blackdiamond,\Yleft}_{\gamma,\mathfrak{f}_0}\bigr)$ is $p^{\eta+\ov{\beta}-u_1}$, and consequently,
$$
U\bigl(E^{\blackdiamond,\Yleft}_{\gamma,\mathfrak{f}_0}\bigr) \simeq \mathbb{Z}_{p^{\eta+\ov{\beta}-u_1}}\oplus \mathbb{Z}_{p^{r-\ov{\beta}+u_1}}.
$$
\end{remark}

\subsubsection[The socle and the center]{The socle  and the center}

In this subsection, we work under the hypothesis of Theorem~\ref{prop 4.10}. Our objective is to compute the socle and the center of  $E^{\blackdiamond,\Yleft}_{\gamma,\mathfrak{f}_0}$ across the different cases that arise in that theorem.

\paragraph{Case $\pmb{p}$ odd}

\begin{proposition}\label{socalo caso 2 del primer teorema} Assume that $p$ is odd. For each pair of parameters $(\gamma,\mathfrak{f}_0)$ as specified in Theorem~\ref{prop 4.10}, we have
$$
\soc\bigl(E^{\blackdiamond,\Yleft}_{\gamma,\mathfrak{f}_0}\bigr) = \bigl\{y+w_h\in E^{\blackdiamond,\Yleft}_{\gamma,\mathfrak{f}_0}:p^{r-u-1}\mid h \text{ and } h\mathfrak{f}_0=-by\bigr\},
$$
where $u$ is at the beginning of Subsection~\ref{subsection 2.1} and the equality $h\mathfrak{f}_0=-by$ is performed in $\mathbb{Z}_{p^r}$.
\end{proposition}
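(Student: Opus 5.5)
The plan is to unwind the definition of the socle directly, using the explicit multiplication~\eqref{formula para cdot en I times H'} of $E^{\blackdiamond,\Yleft}_{\gamma,\mathfrak{f}_0}$. Since $H$ is trivial, $h\cdot l=l$. Under the correspondence recalled after~\eqref{formula T barra}, the cocycle $T(\gamma,\mathfrak{f}_0)=(\alpha_\gamma,-f_{\mathfrak{f}_0})$ corresponds to the extension whose second structure map is $f=f_{\mathfrak{f}_0}$. Hence, by~\eqref{pep2pr}, the element $y+w_h$ belongs to the socle if and only if
$$
a^hz+f_{\mathfrak{f}_0}(h,l)+bl\,a^hy+w_l=z+w_l\qquad\text{for all $z\in I$ and $0\le l<p^{\eta}$.}
$$
The first step is to take $l=0$. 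Since $f_{\mathfrak{f}_0}(h,0)=0$ by~\eqref{alpha y f}, the condition reduces to $a^hz=z$ for all $z$, that is, $a^h=1$ in $\mathbb{Z}_{p^r}$. Write $a=1+p^{u+1}s$ with $p\nmid s$; when $a=1$ we have $u+1=r$. By Lemma~\ref{para ejemplos} (here $p$ is odd), the multiplicative order of $a$ is exactly $p^{r-u-1}$. So $a^h=1$ if and only if $p^{r-u-1}\mid h$, which gives the first condition in the statement.

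Next, assume $p^{r-u-1}\mid h$, so that $a^h=1$. The remaining condition becomes $f_{\mathfrak{f}_0}(h,l)+bly=0$ for all $l$. By~\eqref{alpha y f}, $f_{\mathfrak{f}_0}(h,l)$ is $l$ times $f_{\mathfrak{f}_0}(h,1)$, so it suffices to take $l=1$. The condition is therefore $f_{\mathfrak{f}_0}(h,1)=-by$, and it remains to show that $f_{\mathfrak{f}_0}(h,1)=h\mathfrak{f}_0$ whenever $p^{r-u-1}\mid h$.

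For this I would use the expansion~\eqref{cociclo1 ej 3'} with $h'=1$ and $k=p^us$. In the first sum, consider a term with $\ell\ge1$ and write $\ell+1=p^tv$ with $p\nmid v$. By Remark~\ref{valuacion de p en combinatorio}, the term $\binom{h}{\ell+1}p^\ell k^\ell$ has $p$-valuation at least $(r-u-1-t)+\ell(u+1)$. For $p$ odd we have $\ell\ge p^t-1\ge 2t$, hence $(\ell-1)(u+1)\ge t$, and this valuation is at least $r$. So only the term $\ell=0$ survives, and it equals $h\mathfrak{f}_0$.

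For the second sum, which carries the factor $p^{\beta}k'$, the same valuation count disposes of the terms with $\ell\ge1$. The term $\ell=0$ equals $b\binom{h}{2}\mathfrak{f}_0$, and showing that it vanishes is the step I expect to be the main obstacle. Its valuation is at least $\beta+(r-u-1)$, which is at least $r$ when $\beta\ge u+1$. When $\beta\le u$, I would try to use the constraint $0\le\mathfrak{f}_0<p^q$ together with $ba+a-1=p^qk''$ (Remark~\ref{remark 1.11'}), and the fact that $\mathfrak{f}_0$ only matters modulo $p^q$, to absorb this term. If that fails, this is the place where the statement would need an extra correction term $b\binom{h}{2}\mathfrak{f}_0$. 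Finally, I would check that the resulting set is independent of the chosen representative, and note that it is indeed an additive subgroup, as a socle must be.
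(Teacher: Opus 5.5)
Your reduction is sound and follows the paper up to the last step. Taking $l=0$ gives $a^h=1$, i.e.\ $p^{r-u-1}\mid h$. Linearity in $l$ reduces everything to $f_{\mathfrak{f}_0}(h,1)=-by$. For $p^{r-u-1}\mid h$, all terms with $\ell\ge1$ in both sums of~\eqref{cociclo1 ej 3'} vanish. (For the pieces $p^{\beta}\binom{h}{\ell+2}p^{(u+1)\ell}$ you actually need the factor $p^{\beta}$ with $\beta\ge1$, e.g.\ when $p=3$ and $\ell=1$, so it is not literally the same count.)

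What remains is $h\mathfrak{f}_0+b\binom{h}{2}\mathfrak{f}_0=-by$, and here there is a genuine gap. You leave open whether this is equivalent to $h\mathfrak{f}_0=-by$ when $\beta\le u$, and the route you suggest cannot work, because $b\binom{h}{2}\mathfrak{f}_0$ is \emph{not} zero in general. For example, take $p=3$, $r=\eta=2$, $a=1$, $b=3$ (so $u=\beta=q=1$) and the admissible parameter $\mathfrak{f}_0=1<p^q$. For $h=2$ you get $b\binom{h}{2}\mathfrak{f}_0=3\ne0$ in $\mathbb{Z}_9$. No use of $0\le\mathfrak{f}_0<p^q$ or of $ba+a-1=p^qk''$ will make this term disappear.

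The missing idea, which is the paper's argument, is that the term vanishes \emph{whenever either equation holds}. Each of $h\mathfrak{f}_0+b\binom{h}{2}\mathfrak{f}_0=-by$ and $h\mathfrak{f}_0=-by$ forces $h\mathfrak{f}_0\in p^{\beta}\mathbb{Z}_{p^r}$, since the other terms are multiples of $b=p^{\beta}k'$. As $p$ is odd, $2\binom{h}{2}\mathfrak{f}_0=(h-1)h\mathfrak{f}_0$ with $2$ invertible, so $\binom{h}{2}\mathfrak{f}_0\in p^{\beta}\mathbb{Z}_{p^r}$. Therefore $b\binom{h}{2}\mathfrak{f}_0\in p^{2\beta}\mathbb{Z}_{p^r}=0$, because $r\le 2\beta$ (Remark~\ref{remark 1.11}). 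Hence the two equations are equivalent and no correction term is needed. The paper uses the same self-consistency trick earlier to discard the $(h-1)\binom{h}{\ell+1}$ part of the second sum.

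Your final check about independence of the representative is unnecessary, since the statement concerns the fixed linear cycle set determined by $(\gamma,\mathfrak{f}_0)$.
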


\begin{proof} Let $y+w_h$ and $z+w_l$ in $E^{\blackdiamond,\Yleft}_{\gamma,\mathfrak{f}_0}$. Since by~\eqref{formula para cdot en I times H'} and~\eqref{pep2pr},
$$
(y+w_h)\cdot (z+w_l) = a^h z + f_{\mathfrak{f}_0}(h,l) + ba^hyl + w_l,
$$
we obtain that
$$
y+w_h\in \soc\bigl(E^{\blackdiamond,\Yleft}_{\gamma,\mathfrak{f}_0}\bigr) \quad\text{if and only if}\quad z = a^h z + f_{\mathfrak{f}_0}(h,l) + ba^h yl\text{ for all $l\in H$ and $z\in I$.}
$$
Taking $l=0$ and $z=1$, we obtain that if $y+w_h\in \soc\bigl(E^{\blackdiamond,\Yleft}_{\gamma,\mathfrak{f}_0}\bigr)$, then $a^h = 1$. Hence
$$
y+w_h\in \soc\bigl(E^{\blackdiamond,\Yleft}_{\gamma,\mathfrak{f}_0}\bigr) \quad\text{if and only if}\quad a^h = 1 \text{ and } f_{\mathfrak{f}_0}(h,l) = - byl\text{ for all $l\in H$.}
$$
Note that, since $pk = p^{u+1}s$ with $0\le u\le r-1$ and $p\nmid s$, from the lifting exponent lemma we obtain that
\begin{equation}\label{pepe9}
a^h = (1+p^{u+1}s)^h = 1\text{ if and only if $p^{r-u-1}\mid h$.}
\end{equation}
Furthermore, since $b = p^{\beta}k'$ and, by~\eqref{cociclo1 ej 3'},
$$
f_{\mathfrak{f}_0}(h,l) = \sum_{\ell=0}^{h-1} \binom{h}{\ell+1} p^{\ell}k^{\ell} \mathfrak{f}_0l + p^{\beta}k'\sum_{\ell=0}^{h-1}\left((h-1) \binom{h}{\ell+1} - \binom{h}{\ell+2}\right) p^{\ell}k^{\ell}\mathfrak{f}_0l,
$$
and element $y+w_h$ of $E^{\blackdiamond,\Yleft}_{\gamma,\mathfrak{f}_0}$ belongs to $\soc\bigl(E^{\blackdiamond,\Yleft}_{\gamma,\mathfrak{f}_0}\bigr)$ if and only if
\begin{equation}\label{pepe5}
p^{r-u-1}\mid h\quad\text{and}\quad \sum_{\ell=0}^{h-1} \binom{h}{\ell+1} p^{\ell}k^{\ell} \mathfrak{f}_0 + p^{\beta}k'\sum_{\ell=0}^{h-1}\left((h-1) \binom{h}{\ell+1} - \binom{h}{\ell+2}\right) p^{\ell}k^{\ell} \mathfrak{f}_0 = - p^{\beta}k'y.
\end{equation}
From the second condition we obtain that $p^{\beta}\mid \sum_{\ell=0}^{h-1} \binom{h}{\ell+1} p^{\ell}k^{\ell} \mathfrak{f}_0$. Hence $p^{2\beta}\mid p^{\beta} \sum_{\ell=0}^{h-1} \binom{h}{\ell+1} p^{\ell}k^{\ell} \mathfrak{f}_0$. Since $r\le 2\beta$, this implies that $p^{\beta} \sum_{\ell=0}^{h-1} \binom{h}{\ell+1} p^{\ell}k^{\ell} \mathfrak{f}_0 = 0$. Since moreover $pk = p^{u+1}s$, condition~\eqref{pepe5} becomes
\begin{equation}\label{pepe6}
p^{r-u-1}\mid h\quad\text{and}\quad \sum_{\ell=0}^{h-1} \binom{h}{\ell+1} p^{(u+1)\ell}s^{\ell} \mathfrak{f}_0 - p^{\beta}k'\sum_{\ell=0}^{h-1} \binom{h}{\ell+2} p^{(u+1)\ell}s^{\ell} \mathfrak{f}_0 = - p^{\beta}k'y.
\end{equation}
Assume that the first condition holds. We claim that the second one reduces to
\begin{equation}\label{pepe7}
h \mathfrak{f}_0 - p^{\beta}k'\binom{h}{2} \mathfrak{f}_0 = - p^{\beta}k'y.
\end{equation}
To check this, we first note that if $\beta\le u+1$ and $\ell\ge 1$, then $\beta+(u+1)\ell\ge 2\beta\ge r$, and so
$$
p^{\beta}k'\binom{h}{\ell+2} p^{(u+1)\ell} = 0;
$$
while if $\beta>u+1$, then
$$
p^{\beta}k'\binom{h}{\ell+2} p^{(u+1)\ell} = p^{\beta-u-1}k'\binom{h}{\ell+2} p^{(u+1)(\ell+1)}.
$$
Consequently, in order to obtain~\eqref{pepe7} it suffices to show that
\begin{equation}\label{pepe8}
\binom{h}{\ell+1} p^{(u+1)\ell} = 0\quad\text{for all $\ell\ge 1$.}
\end{equation}
We now proceed to prove this fact. Write $\ell+1 = p^tv$ with $p\nmid v$ and $t\ge 0$. Clearly, if $\ell\ge 1$, then $t\le \ell-1$.~There\-fore, if $t\ge r-u-1$, then
$$
(u+1)\ell \ge (u+1)(t+1) \ge (u+1)(r-u) = ru+r-(u+1)u\ge r,
$$
and so~\eqref{pepe8} holds. In the rest of the proof we assume that $t< r-u-1$. By the first condition in~\eqref{pepe6} and~Re\-mark~\ref{valuacion de p en combinatorio}, we known that $p^{r-u-1-t}\mid \binom{h}{\ell+1}$. Therefore, the exponent of $p$ in $\binom{h}{\ell+1} p^{(u+1)\ell}$, is at least
$$
r-u-1-t+(u+1)\ell\ge r-u+u\ell\ge r,
$$
and hence~\eqref{pepe8} holds. This finishes the proof of~\eqref{pepe7}. Therefore,
$$
y+w_h\in \soc\bigl(E^{\blackdiamond,\Yleft}_{\gamma,\mathfrak{f}_0}\bigr) \quad\text{if and only if}\quad p^{r-u-1}\mid h\text{ and } h \mathfrak{f}_0 - p^{\beta}k'\binom{h}{2} \mathfrak{f}_0 = - p^{\beta}k'y.
$$
But if $y+w_h$ satisfies the last condition, then $p^{\beta}\mid h\mathfrak{f}_0$ and hence $p^{2\beta}\mid p^{\beta}k' \binom{h}{2}\mathfrak{f}_0$. Since $2\beta\ge r$, the last equation~be\-comes $h\mathfrak{f}_0 = - by$. Consequently the proposition is true.
\end{proof}

\begin{remark} Assume that we are under the hypothesis of Proposition~\ref{socalo caso 2 del primer teorema}. If $\mathfrak{f}_0 = 0$, then
$$
\Soc\bigl(E^{\blackdiamond,\Yleft}_{\gamma,\mathfrak{f}_0}\bigr) = \bigl\{y+w_h\in E^{\blackdiamond,\Yleft}_{\gamma,\mathfrak{f}_0}: h\in p^{r-u-1}\mathbb{Z}_{p^{\eta}} \text{ and } y\in  p^{r-\beta} \mathbb{Z}_{p^r} \bigr\}.
$$
Consequently $\bigl|\Soc\bigl(E^{\blackdiamond,\Yleft}_{\gamma,\mathfrak{f}_0}\bigr)\bigr| = p^{\eta-r+u+1+\beta}$. Assume then that $\mathfrak{f}_0\ne 0$ and write $\mathfrak{f}_0 = z_2 = p^{u_2}s_2$ with~$p\nmid s_2$. Let $\bar{k}'$ be the inverse of $k'$ in $\mathbb{Z}_{p^r}$ and let $\xi\coloneqq \max(r-u-1,\beta-u_2)$. A direct computation shows that
$$
\Soc\bigl(E^{\blackdiamond,\Yleft}_{\gamma,\mathfrak{f}_0}\bigr) = \bigl\{y+w_h\in E^{\blackdiamond,\Yleft}_{\gamma,\mathfrak{f}_0}: h\in p^{\xi}\tilde{h} \text{ with } \tilde{h}\in \mathbb{Z}_{p^{\eta}}\text{ and } y\in -p^{\xi+u_2-\beta}\tilde{h}s_2\bar{k}' + p^{r-\beta} \mathbb{Z}_{p^r} \bigr\},
$$
which implies that $\bigl|\Soc\bigl(E^{\blackdiamond,\Yleft}_{\gamma,\mathfrak{f}_0}\bigr)\bigr| = p^{\eta-\xi+\beta}$.
\end{remark}

\begin{proposition}\label{centro caso 2 del primer teorema} Assume that $p$ is odd. For each pair of parameters $(\gamma,\mathfrak{f}_0)$ as specified in Theorem~\ref{prop 4.10}, we have
$$
\Z\bigl(E^{\blackdiamond,\Yleft}_{\gamma,\mathfrak{f}_0}\bigr) = \bigl\{y+w_h\in E^{\blackdiamond,\Yleft}_{\gamma,\mathfrak{f}_0}: p^{r-u-1}\mid h,\text{ } (a-1)y = by \text{ and } h\mathfrak{f}_0=-by \bigr\}.
$$
where $u$ is at the beginning of Subsection~\ref{subsection 2.1} and the equalities are performed in $\mathbb{Z}_{p^r}$.
\end{proposition}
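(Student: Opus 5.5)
Since $\Z(E)\subseteq\soc(E)$ by definition, I start from Proposition~\ref{socalo caso 2 del primer teorema}. An element $y+w_h$ lies in $\Z\bigl(E^{\blackdiamond,\Yleft}_{\gamma,\mathfrak{f}_0}\bigr)$ if and only if it lies in the socle and, in addition, $(z+w_l)\cdot(y+w_h)=y+w_h$ for all $z\in I$ and $l\in H$. The socle condition gives exactly $p^{r-u-1}\mid h$ and $h\mathfrak{f}_0=-by$. So it only remains to show that, for elements of the socle, the second condition is equivalent to $(a-1)y=by$.

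Using~\eqref{formula para cdot en I times H'} and~\eqref{pep2pr} with the trivial structure on $H$, I compute
$$
(z+w_l)\cdot(y+w_h)=a^ly+f_{\mathfrak{f}_0}(l,h)+ba^lzh+w_h.
$$
Centrality therefore means
$$
a^ly+f_{\mathfrak{f}_0}(l,h)+ba^lzh=y\qquad\text{for all $l\in H$ and $z\in I$.}
$$
Next I reduce the terms involving $h$. Since $p^{r-u-1}\mid h$ and $b=p^\beta k'$, the term $ba^lzh$ is divisible by $p^{\beta+r-u-1}$. The socle condition $h\mathfrak{f}_0=-by$ gives $p^\beta\mid h\mathfrak{f}_0$. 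From~\eqref{cociclo1 ej 3'}, every summand of $f_{\mathfrak{f}_0}(l,h)$ is a multiple of $h\mathfrak{f}_0$, so the factor $h$ in the second argument lets me rewrite $f_{\mathfrak{f}_0}(l,h)$ in terms of $h\mathfrak{f}_0=-by$. The most natural route is to use the unexpanded form
$$
f_{\mathfrak{f}_0}(l,h)=h\mathfrak{f}_0\sum_{j=0}^{l-1}(1+jb)a^j .
$$
Substituting $h\mathfrak{f}_0=-by$ and using $b^2=0$ turns this into $-by\sum_{j=0}^{l-1}a^j$.

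The condition then becomes
$$
(a^l-1)y-by\sum_{j=0}^{l-1}a^j+ba^lzh=0 .
$$
Since $a^l-1=(a-1)\sum_{j<l}a^j$, this reads
$$
\bigl((a-1)y-by\bigr)\sum_{j=0}^{l-1}a^j+ba^lzh=0 .
$$
Taking $l=1$ and $z=0$ shows that $(a-1)y=by$ is necessary. Conversely, assume $(a-1)y=by$. I still need $bzh=0$ for all $z$, i.e.\ $p^{\beta+r-u-1}\mid p^r$, which amounts to $\beta\ge u+1$ unless other conditions force it.

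This is the main obstacle. If $\beta<u+1$, then $b\cdot h$ need not vanish, and I must check whether the combination of $(a-1)y=by$ and $h\mathfrak{f}_0=-by$ forces $bh=0$ in $\mathbb{Z}_{p^r}$. The condition $h\cdot(z+w_l)$-type terms in the centre also require that $y\Yleft$-contributions vanish: the condition $bzh=0$ for all $z$ is literally $bh=0$. I would first try to derive $bh=0$ from the remaining hypotheses. If that fails, I would verify whether the stated set implicitly ensures it, using $r\le 2\beta$ and $a=1+p^{u+1}s$. Failing both, the correct statement needs the additional condition $bh=0$.
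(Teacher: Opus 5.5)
Your computation is right, and the obstacle you isolate cannot be removed. Of the three outcomes you list, the last one is what actually happens.

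\textbf{Why the statement fails.} In your formula, the choice $l=0$ gives $bzh=0$ for all $z\in I$. Equivalently, $y\cdot w_h=w_h+y\Yleft h=w_h+bhy$ straight from the definition of $\Yleft$. So $bh=0$, i.e.\ $p^{r-\beta}\mid h$, is necessary for centrality, and when $\beta<u+1$ it does not follow from the listed conditions.

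For instance, take $p=3$, $r=\eta=2$, $a=1$ (so $u=1$), $b=3$ (so $\beta=1$ and $r\le 2\beta$), and the admissible parameter $\mathfrak{f}_0=0$. Then $w_1$ satisfies $p^{r-u-1}=1\mid 1$, $(a-1)\cdot 0=b\cdot 0$ and $1\cdot\mathfrak{f}_0=-b\cdot 0$, and it even lies in the socle. But $(1+w_0)\cdot w_1=1\Yleft 1+w_1=3+w_1\neq w_1$. More generally, whenever $\beta<u+1$ and $\mathfrak{f}_0=0$, the element $w_{p^{r-u-1}}$ is in the displayed set but not in the center. So the statement is false as written.

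\textbf{Where the paper's proof goes wrong.} It slips at its first display: it writes the cross term of $(z+w_l)\cdot(y+w_h)$ as $ba^lyh$, using the $I$-coordinate of the right-hand factor instead of $z$. With that term, $l=0$ only produces $byh=0$. That condition is indeed implied by $(a-1)y=by$ and $p^{r-u-1}\mid h$, because $bhy=h(a-1)y$ and $p^r\mid h(a-1)$. The correct term $ba^lzh$ instead produces $bh=0$, which is not implied.

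\textbf{What your proposal is missing.} Only the decision. Together with the $l=0$ step, your argument proves the corrected statement
\[
\Z\bigl(E^{\blackdiamond,\Yleft}_{\gamma,\mathfrak{f}_0}\bigr)=\bigl\{y+w_h: p^{r-u-1}\mid h,\ bh=0,\ (a-1)y=by,\ h\mathfrak{f}_0=-by\bigr\}.
\]
This agrees with the stated one when $\beta\ge u+1$, since then $p^{r-u-1}\mid h$ forces $bh=0$.

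Your reduction is also cleaner than the paper's. Substituting $h\mathfrak{f}_0=-by$ into the unexpanded $f_{\mathfrak{f}_0}(l,h)=h\mathfrak{f}_0\sum_{j<l}(1+jb)a^j$ and using $b^2=0$ gives the factor $\bigl((a-1)y-by\bigr)\sum_{j<l}a^j$ at once. This avoids first reducing $f_{\mathfrak{f}_0}(l,h)$ to $\mathfrak{f}_0lh$, and avoids arguing that $a-1$ divides $1+a+\cdots+a^{l-1}-l$.
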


\begin{proof} Let $y+w_h\in \soc\bigl(E^{\blackdiamond,\Yleft}_{\gamma,\mathfrak{f}_0}\bigr)$. Arguing as in the proof of Proposition~\ref{socalo caso 2 del primer teorema}, we obtain
$$
y+w_h\in \Z\bigl(E^{\blackdiamond,\Yleft}_{\gamma,\mathfrak{f}_0}\bigr) \quad\text{if and only if}\quad y = a^l y + f_{\mathfrak{f}_0}(l,h) + ba^l yh\text{ for all $l\in H$.}
$$
Taking $l=0$, we obtain that
$$
y+w_h\in \Z\bigl(E^{\blackdiamond,\Yleft}_{\gamma,\mathfrak{f}_0}\bigr) \quad\text{if and only if}\quad byh = 0 \text{ and } y = a^l y + f_{\mathfrak{f}_0}(l,h)\text{ for all $l\in H$.}
$$
Note now that since $p^{r-u-1}\mid h$ and $pk = p^{u+1}s$, from~\eqref{cociclo1 ej 3'} we obtain
$$
f_{\mathfrak{f}_0}(l,h) = \mathfrak{f}_0 lh + p^{\beta}k'\binom{l}{2}h\mathfrak{f}_0 = \mathfrak{f}_0 lh = -byl\qquad\text{for $0\le h,l<p^{\eta}$,}
$$
where the second equality holds because $p^{\beta}k'h\mathfrak{f}_0 = - p^{2\beta} (k')^2$ and $r\le 2\beta$. Hence
$$
y = a^l y + f_{\mathfrak{f}_0}(l,h) \quad\text{if and only if}\quad (a^l-1) y = bl y
$$
We claim that the equality $(a-1)y = by$, corresponding to $l=1$, entails all the other ones. Indeed, since $b^2 = 0$ and $a-1$ divides $1+a+\cdots+a^{l-1}-l$, there exists $x\in \mathds{Z}_{p^r}$ such that
$$
(a^l-1-bl) y = (a-1)(1+a+\cdots+a^{l-1}-l)y = (a-1)^2xy = b^2 xy = 0,
$$
as desired. Moreover, since $p^{u+1}\mid a-1$ and $p^{r-u-1}\mid h$, from $(a-1)y = by$, we also obtain that $bhy = h(a-1)y = 0$. The proposition follows immediately from these facts.
\end{proof}

\begin{remark} Let $\delta$ be the largest integer $k\le r$ such that $p^k\mid a-1-b = p^{u+1}s-p^{\beta}k'$. Observe that, if~$u+1\ne \beta$, then $\delta = \min(u+1,\beta)$, while if $u+1 = \beta$, then $\delta\ge u+1$. Clearly
$$
\Z\bigl(E^{\blackdiamond,\Yleft}_{\gamma,\mathfrak{f}_0}\bigr) = \bigl\{y+w_h\in E^{\blackdiamond,\Yleft}_{\gamma,\mathfrak{f}_0}: p^{r-\delta}\mid y,\text{ } p^{r-u-1}\mid h \text{ and } h\mathfrak{f}_0=-by \bigr\}.
$$
Since $\min(\delta,\beta) = \min(u+1,\beta)$, if $\mathfrak{f}_0 = 0$, then
$$
\Z\bigl(E^{\blackdiamond,\Yleft}_{\gamma,\mathfrak{f}_0}\bigr) = \bigl\{y+w_h\in E^{\blackdiamond,\Yleft}_{\gamma,\mathfrak{f}_0}: y\in p^{r-\min(u+1,\beta)}\mathbb{Z}_{p^r} \text{ and }  h\in p^{r-u-1}\mathbb{Z}_{p^{\eta}} \bigr\}.
$$
Consequently $\bigl|\Z\bigl(E^{\blackdiamond,\Yleft}_{\gamma,\mathfrak{f}_0}\bigr)\bigr| = p^{\eta-r+u+1+\min(u+1,\beta)}$. Assume then that $\mathfrak{f}_0\ne 0$ and write $\mathfrak{f}_0 = z_2 = p^{u_2}s_2$ with~$p\nmid s_2$. Let~$\bar{k}'$ be the inverse of $k'$ in $\mathbb{Z}_{p^r}$ and let $\xi\coloneqq \max(r-u-1,\beta-u_2)$. A direct computation shows that
$$
\Z\bigl(E^{\blackdiamond,\Yleft}_{\gamma,\mathfrak{f}_0}\bigr) = \bigl\{y+w_h\in E^{\blackdiamond,\Yleft}_{\gamma,\mathfrak{f}_0}: h\in p^{\xi}\tilde{h} \text{ with } \tilde{h}\in \mathbb{Z}_{p^{\eta}},\text{ } y\in p^{r-\delta}\mathbb{Z}_{p^r}\text{ and } y\in -p^{\xi+u_2-\beta}\tilde{h}s_2\bar{k}' + p^{r-\beta} \mathbb{Z}_{p^r} \bigr\}.
$$
Assume that $\delta = \min(u+1,\beta)$. Since $p^{r-\delta}\mid y$, in this case $hf_0 = - by=0$. Thus,
$$
\Z\bigl(E^{\blackdiamond,\Yleft}_{\gamma,\mathfrak{f}_0}\bigr) = \bigl\{y+w_h\in E^{\blackdiamond,\Yleft}_{\gamma,\mathfrak{f}_0}: h\in p^{\xi'}\mathbb{Z}_{p^{\eta}} \text{ and }  y\in p^{r-\delta}\mathbb{Z}_{p^r}\bigr\},
$$
where $\xi'\coloneqq \max(r-u-1,r-u_2)$, and hence $\bigl|\Z\bigl(E^{\blackdiamond,\Yleft}_{\gamma,\mathfrak{f}_0}\bigr)\bigr| = p^{\eta-\xi'+\delta}$. \begin{comment}
$$
y\in p^{r-\delta} \mathbb{Z}_{p^r}\subseteq p^{r-\beta} \mathbb{Z}_{p^r}
$$
and so $f_0 h = -b y = 0$. Hence $h\in p^{r-u_2} \mathbb{Z}_{p^r}$.
We have
$$
\Z\bigl(E^{\blackdiamond,\Yleft}_{\gamma,\mathfrak{f}_0}\bigr) = \bigl\{y+w_h\in E^{\blackdiamond,\Yleft}_{\gamma,\mathfrak{f}_0}: y\in p^{\xi}\mathbb{Z}_{p^r} \text{ and }  h\in p^{\xi'}\mathbb{Z}_{p^{\eta}} \bigr\},
$$
where $\xi'\coloneqq \max(r-u-1,r-\beta,r-u_2)$. Consequently $\bigl|\Z\bigl(E^{\blackdiamond,\Yleft}_{\gamma,\mathfrak{f}_0}\bigr)\bigr| = p^{r+\eta-\xi-\xi'}$.
$$
p^{u_2}s_2 h = - p^{\beta} k' y
$$
$$
(p^{u+1}s-p^{\beta}k')y=0\quad p^{u_2}s_2 p^{r-u-1}\tilde{h} = - p^{\beta}k'y = p^{u+1}sy
$$
Let $p$, $a$ and $(\gamma,\mathfrak{f}_0)$ be as in Proposition~\ref{centro caso 2 del primer teorema} and let $\xi\coloneqq \max(r-u-1,r-\beta)$. If $\mathfrak{f}_0 = 0$, then
$$
\Z\bigl(E^{\blackdiamond,\Yleft}_{\gamma,\mathfrak{f}_0}\bigr) = \bigl\{y+w_h\in E^{\blackdiamond,\Yleft}_{\gamma,\mathfrak{f}_0}: y\in p^{\xi}\mathbb{Z}_{p^r} \text{ and }  h\in p^{r-u-1}\mathbb{Z}_{p^{\eta}} \bigr\}.
$$
Consequently $\bigl|\Z\bigl(E^{\blackdiamond,\Yleft}_{\gamma,\mathfrak{f}_0}\bigr)\bigr| = p^{\eta+u-\xi+1}$.
\end{comment}
\end{remark}

\paragraph{Case $\pmb{p=2}$}

\begin{remark}\label{caso 1 p=2} Assume that $p=2$ and let $(\gamma,\mathfrak{f}_0)$ be as in Theorem~\ref{prop 4.10}. Let $r=1$, which implies that $a=1$ and~$b=0$. Since by~\eqref{formula para cdot en I times H'},~\eqref{pep2pr} and Remark~\ref{reducciones para f, etc si a=1}
$$
(y+w_h)\cdot (z+w_l) = z + \mathfrak{f}_0hl + w_l,
$$
we obtain that
$$
y+w_h\in \soc\bigl(E^{\blackdiamond,\Yleft}_{\gamma,\mathfrak{f}_0}\bigr)\quad\text{if and only if}\quad \mathfrak{f}_0hl = 0 \text{ for all $l\in H$.}
$$
Thus,
$$
\soc\bigl(E^{\blackdiamond,\Yleft}_{\gamma,\mathfrak{f}_0}\bigr) = \begin{cases} E^{\blackdiamond,\Yleft}_{\gamma,\mathfrak{f}_0} & \text{if $\mathfrak{f}_0=0$,}\\ \{y+w_h\in E^{\blackdiamond,\Yleft}_{\gamma,\mathfrak{f}_0}:2\mid h\} & \text{if $\mathfrak{f}_0=1$.}\end{cases}
$$
A direct computation shows that
$$
\Z\bigl(E^{\blackdiamond,\Yleft}_{\gamma,\mathfrak{f}_0}\bigr) = \soc\bigl(E^{\blackdiamond,\Yleft}_{\gamma,\mathfrak{f}_0}\bigr).
$$
Assume now that $r=2$. Then $(a,k)\in\{(1,0),(3,1)\}$ and $b\in \{0,2\}$. Since by~\eqref{formula para cdot en I times H'} and~\eqref{pep2pr}
$$
(y+w_h)\cdot (z+w_l) = a^hz + f_{\mathfrak{f}_0}(h,l) + ba^hyl + w_l,
$$
we obtain that
$$
y+w_h\in \soc\bigl(E^{\blackdiamond,\Yleft}_{\gamma,\mathfrak{f}_0}\bigr) \quad\text{if and only if}\quad z = a^h z + f_{\mathfrak{f}_0}(h,l) + ba^h yl\text{ for all $l\in H$ and $z\in I$.}
$$
Taking $l=0$ and $z=1$, we obtain that if $y+w_h\in \soc\bigl(E^{\blackdiamond,\Yleft}_{\gamma,\mathfrak{f}_0}\bigr)$, then $a^h = 1$. Hence
\begin{equation}\label{pepe4}
y+w_h\in \soc\bigl(E^{\blackdiamond,\Yleft}_{\gamma,\mathfrak{f}_0}\bigr) \quad\text{if and only if}\quad a^h = 1 \text{ and } f_{\mathfrak{f}_0}(h,l) = - byl\text{ for all $l\in H$.}
\end{equation}
Note that
\begin{equation}\label{pepe4'}
a^h=1 \text{ if and only if } (a=1) \text{ or } (a=3\text{ and } 2\mid h).
\end{equation}
Moreover, by~\eqref{cociclo1 ej 3'},
\begin{equation}\label{pepe4''}
f_{\mathfrak{f}_0}(h,l) = \begin{cases} \mathfrak{f}_0 hl+ b\mathfrak{f}_0 \binom{h}{2}l & \text{if $a=1$,}\\ \mathfrak{f}_0 hl + 2\mathfrak{f}_0 \binom{h}{2}l + b\mathfrak{f}_0 \binom{h}{2} l &\text{if $a=3$,}\end{cases} = \begin{cases} \mathfrak{f}_0hl & \text{if $a=1$ and $b= 0$,}\\ \mathfrak{f}_0h^2l & \text{if $a=1$ and $b=2$,}\\ \mathfrak{f}_0h^2l & \text{if $a=3$ and $b=0$,}\\ \mathfrak{f}_0hl &\text{if $a=3$ and $b=2$.}\end{cases}
\end{equation}
Furthermore, by Theorem~\ref{prop 4.10},
\begin{equation}\label{2pepe4''}
\mathfrak{f}_0 \in \begin{cases} \{0,1,2,3\} & \text{if ($a=1$ and $b=0$) or ($a=3$ and $b=2$),}\\ \{0,1\} & \text{if ($a=1$ and $b=2$) or ($a=3$ and $b=0$).} \end{cases}
\end{equation}
Combining this with~\eqref{pepe4}, \eqref{pepe4'} and~\eqref{pepe4''}, we obtain that
\begin{equation}\label{3pepe4''}
\soc\bigl(E^{\blackdiamond,\Yleft}_{\gamma,\mathfrak{f}_0}\bigr) = \begin{cases} E^{\blackdiamond,\Yleft}_{\gamma,\mathfrak{f}_0} & \text{if $\mathfrak{f}_0=0$, $a=1$ and $b=0$,}\\ \{y+w_h\in E^{\blackdiamond,\Yleft}_{\gamma,\mathfrak{f}_0}:4\mid h\} & \text{if $\mathfrak{f}_0=1$, $a=1$ and $b=0$,}\\ \{y+w_h\in E^{\blackdiamond,\Yleft}_{\gamma,\mathfrak{f}_0}:2\mid h\} & \text{if $(\mathfrak{f}_0,a)\in\{(0,3),(1,3),(2,1),(3,1)\}$ and $b=0$,}\\ \{y+w_h\in E^{\blackdiamond,\Yleft}_{\gamma,\mathfrak{f}_0}:2\mid y\} & \text{if $(\mathfrak{f}_0,a)\in\{(0,1),(2,3)\}$ and $b=2$,}\\ \{y+w_h\in E^{\blackdiamond,\Yleft}_{\gamma,\mathfrak{f}_0}:2\mid y\text{ and } 2\mid h\} & \text{if $(\mathfrak{f}_0,a)\in\{(0,3),(1,1)\}$ and $b=2$,}\\ \{y+w_h\in E^{\blackdiamond,\Yleft}_{\gamma,\mathfrak{f}_0}: (y,h)\in \mathfrak{A}\} & \text{if $\mathfrak{f}_0\in\{1,3\}$, $a=3$ and $b=2$.}\end{cases}
\end{equation}
where $\mathfrak{A}\coloneqq \{(y,h)\in \mathbb{Z}_4\oplus \mathbb{Z}_{p^{\eta}}:(y\in\{0,2\} \text{ and } 4\mid h) \text{ or } (y\in\{1,3\} \text{ and } h\equiv 2\pmod{4})\}$. Arguing as in the proof of Proposition~\ref{centro caso 2 del primer teorema}, we can see that an element $y+w_h\in \soc\bigl(E^{\blackdiamond,\Yleft}_{\gamma,\mathfrak{f}_0}\bigr)$ belongs to $\Z\bigl(E^{\blackdiamond,\Yleft}_{\gamma,\mathfrak{f}_0}\bigr)$ if and only if $$
byh=0\qquad\text{and}\qquad y = a^l y + f_{\mathfrak{f}_0}(l,h)\quad\text{for $1\le l< 2^{\eta}$.}
$$
Using this and~\eqref{pepe4''}--\eqref{3pepe4''}, we obtain that
$$
\Z\bigl(E^{\blackdiamond,\Yleft}_{\gamma,\mathfrak{f}_0}\bigr) = \begin{cases} E^{\blackdiamond,\Yleft}_{\gamma,\mathfrak{f}_0} & \text{if $\mathfrak{f}_0=0$, $a=1$ and $b=0$,}\\ \{y+w_h\in E^{\blackdiamond,\Yleft}_{\gamma,\mathfrak{f}_0}:4\mid h\} & \text{if $\mathfrak{f}_0\in\{1,3\}$, $a=1$ and $b=0$,}\\ \{y+w_h\in E^{\blackdiamond,\Yleft}_{\gamma,\mathfrak{f}_0}:2\mid y \text{ and } 2\mid h\} & \text{if $(\mathfrak{f}_0,b)\in\{(0,0),(0,2),(2,2)\}$ and $a=3$,}\\
\{y+w_h\in E^{\blackdiamond,\Yleft}_{\gamma,\mathfrak{f}_0}: (y,h)\in \mathfrak{A}\} & \text{if $(\mathfrak{f}_0,b)\in\{(1,0),(1,2),(3,2)\}$ and $a=3$,}\\ \{y+w_h\in E^{\blackdiamond,\Yleft}_{\gamma,\mathfrak{f}_0}: 2\mid h\} & \text{if $\mathfrak{f}_0=2$, $a=1$ and $b=0$,}\\ \{y+w_h\in E^{\blackdiamond,\Yleft}_{\gamma,\mathfrak{f}_0}:2\mid y\} & \text{if $\mathfrak{f}_0=0$, $a=1$ and $b=2$,}\\
\{y+w_h\in E^{\blackdiamond,\Yleft}_{\gamma,\mathfrak{f}_0}:2\mid y \text{ and } 4\mid h\} & \text{if $\mathfrak{f}_0=1$, $a=1$ and $b=2$.}\end{cases}
$$
\end{remark}

\subsection[Case \texorpdfstring{$H=\mathbb{Z}_{p^{\eta}}$}{H=Zpn} and \texorpdfstring{$I=\mathbb{Z}_{p^r}$}{I=Zpr} with \texorpdfstring{$r>\eta$}{r>n} if \texorpdfstring{$p$}{p} is odd, and \texorpdfstring{$(\eta,r)=(1,2)$}{(n,r)=(1,2)} if \texorpdfstring{$p=2$}{p=2}]{Case \texorpdfstring{$\pmb{H=\mathbb{Z}_{p^{\eta}}}$}{H=Zpn} and \texorpdfstring{$\pmb{I=\mathbb{Z}_{p^r}}$}{I=Zpr} with \texorpdfstring{$\pmb{r>\eta}$}{r>n} if \texorpdfstring{$\pmb{p}$}{p} is odd, and \texorpdfstring{$\pmb{(\eta,r)=(1,2)}$}{(n,r)=(1,2)} if \texorpdfstring{$\pmb{p=2}$}{p=2}}\label{subseccion 2.2}

Let $H\coloneqq\left(\mathbb{Z}_{p^{\eta}},+\right)$, equipped with the trivial cycle set structure, let $I\coloneqq \mathbb{Z}_{p^r}$ and let $\blackdiamond\colon H\times I\to I$ and $\Yleft\colon I\times H\to I$ be two maps such that conditions~(1.12)--(1.14) in~\cite{GGV2} are satisfied. Assume $\eta<r$ if $p$ is odd, and $(\eta,r) = (1,2)$ if~$p=2$. Let $a,b\in \mathbb{Z}_{p^r}$ be as in~\eqref{pep2pr}. By Lemma~\ref{para ejemplos} and Remarks~\ref{complemento para ejemplos} and~\ref{remark 1.11}, we know that $$
a = 1+p^{r-\eta}k\quad\text{and}\quad b=p^{\beta}k'\qquad\text{with $\beta\le r\le \min(2\beta,\beta+\eta)$ and $p\nmid k'$.}
$$
If $a\ne 1$, then we write $k=p^us$ with $0\le u<\eta$ and $p\nmid s$. When $a=1$ we set $u\coloneqq \eta$ and $s\coloneqq 1$. Note that
\begin{equation}\label{accion1 ej 4'}
h\blackdiamond y = (1+p^{r-\eta}k)^h y = \sum_{i=0}^h \binom{h}{i} p^{(r-\eta)i} k^i y.
\end{equation}
Let $(\gamma,\mathfrak{f}_0)\in \ker T_1\cap \ker T_2$ and let $f_{\mathfrak{f}_0}$ be as in~\eqref{alpha y f}. For $0\le h,h'<p^{\eta}$, we have
\begin{equation}\label{cociclo1 ej 4'}
\begin{aligned}
f_{\mathfrak{f}_0}(h,h') & = \sum_{j=0}^{h-1} (1+p^{r-\eta}k)^j \mathfrak{f}_0h'+ \sum_{j=0}^{h-1} j p^{\beta}k'(1+p^{r-\eta}k)^j \mathfrak{f}_0h'\\
& = \sum_{j=0}^{h-1} \sum_{\ell=0}^j\binom{j}{\ell} p^{(r-\eta)\ell}k^{\ell}\mathfrak{f}_0h' + p^{\beta}k'\sum_{j=0}^{h-1} \sum_{\ell=0}^j j\binom{j}{\ell} p^{(r-\eta)\ell}k^{\ell}\mathfrak{f}_0h'\\
& = \sum_{\ell=0}^{h-1} \sum_{j=\ell}^{h-1} \binom{j}{\ell} p^{(r-\eta)\ell}k^{\ell}\mathfrak{f}_0h' + p^{\beta}k'\sum_{\ell=0}^{h-1} \sum_{j=\ell}^{h-1} j \binom{j}{\ell} p^{(r-\eta)\ell}k^{\ell}\mathfrak{f}_0h'\\
& = \sum_{\ell=0}^{h-1} \binom{h}{\ell+1} p^{(r-\eta)\ell}k^{\ell} \mathfrak{f}_0h'+p^{\beta}k'\sum_{\ell=0}^{h-1}\left((h-1) \binom{h}{\ell+1} - \binom{h}{\ell+2}\right) p^{(r-\eta)\ell}k^{\ell}\mathfrak{f}_0h'.
\end{aligned}
\end{equation}

\begin{remark}\label{caso eta le q} Let $q$, $q'$, $k''$ and $k'''$ be as defined in Remark~\ref{remark 1.11'}. Assume that $\eta\le q$, so that we are in the case~b) of Subsubsection~\ref{two particular cases}, with $d\coloneqq p^{\eta}$ and $c\coloneqq p^{q-\eta}k''$. More concretely,
\begin{equation}\label{pep0N}
p^{\eta} c = p^q k'' = ba+a-1 = b+a-1 = p^{\beta}k' + p^{r-\eta}k,
\end{equation}
where we used Remark~\ref{remark 1.11'} and the identity $b=ba$ from~\eqref{pepito2}. Similarly, we have
\begin{equation}\label{pepito3'N}
p^{q'} k''' = ba-a+1 = b-a+1 = p^{\beta}k' - p^{r-\eta}k = 2p^{\beta}k' - p^qk'',
\end{equation}
where the last equality holds by~\eqref{pep0N}. By Remark~\ref{remark 1.11}, if $a=1$, then
\begin{equation}\label{un caso'}
q=q'=\beta\ge r-\eta.
\end{equation}
Suppose now that $q \ne q'$, $q \ne \beta$, or $q'\ne \beta$, which implies $a\ne 1$. If $p$ is odd, then applying~\eqref{pep0N} and~\eqref{pepito3'N}, we deduce that
\begin{equation}\label{tres casos'}
q=q'=r-\eta+u<\beta,\quad q' =\beta = r-\eta+u < q\quad\text{or}\quad q = \beta = r-\eta+u < q'.
\end{equation}
Note also that, since $\eta<r$, from~\eqref{tres casos'} it follows that $u<\min(q,q',\beta)$.
\end{remark}

\begin{remark}\label{caso q < eta} Assume that $q<\eta$, let $k''$ be as defined in Remark~\ref{remark 1.11'} and let $\kappa\in \mathbb{Z}_{p^r}$ be such that $p^{\eta}\kappa k'' = p^{\eta}$ (thus, we can take~$1<\kappa < p^{r-\eta}$ such that $\kappa k''\equiv 1\pmod{p^{r-\eta}})$. Clearly, we are in the case~a) of Sub\-sub\-section~\ref{two particular cases}, with $d\coloneqq p^{\eta}$ and $c\coloneqq p^{\eta-q}\kappa$. More concretely
\begin{equation}\label{pepit2}
p^{\eta}= p^{\eta-q}\kappa p^qk'' = c(ba+a-1) = c(b+a-1),
\end{equation}
where the last equality holds by~\eqref{pepit1}. Since $\eta<r$, this implies that $q$ is the largest~inte\-ger, lesser or equal than $r$, such that
\begin{equation*}%\label{pepito4''}
p^q\mid b+a-1 = p^{\beta}k' + p^{r-\eta}k.
\end{equation*}
Clearly, if $a=1$, then $q=q'=\beta$. Assume that $q\ne q'$, $q\ne \beta$ or $q'\ne \beta$, which implies that $a\ne 1$. We claim that if~$p$ is odd, then
\begin{equation}\label{tres casos}
q=q'=r-\eta+u<\beta,\quad q' =\beta = r-\eta+u < q\quad\text{or}\quad q = \beta = r-\eta+u < q'.
\end{equation}
From the equalities
\begin{align*}
& p^qk'' = ba+a-1 = p^{\beta}k'(1+p^{r-\eta+u}s)+ p^{r-\eta+u}s = p^{\beta}k'+p^{\beta+r-\eta+u}k's+ p^{r-\eta+u}s\\
\shortintertext{and}
& p^{q'}k''' = ba-a+1 = p^{\beta}k'(1+p^{r-\eta+u}s)- p^{r-\eta+u}s = p^{\beta}k'+p^{\beta+r-\eta+u}k's - p^{r-\eta+u}s,
\end{align*}
we obtain that
\begin{equation}\label{pepit4}
p^qk'' - p^{q'}k''' = 2 p^{r-\eta+u}s\quad\text{and}\quad p^qk'' + p^{q'}k''' = 2p^{\beta}k'+2p^{\beta+r-\eta+u}k's.
\end{equation}
From this it follows immediately that if $q\ne q'$, then $\beta = r-\eta+u = \min(q,q')$. So, we can assume that $q=q'$. Then the last equality in~\eqref{pepit4} implies that $q=q'\le \beta$, and hence $q=q'<\beta$ by assumption. Consequently, again by the last equality in~\eqref{pepit4}, we obtain that $p\mid k''+k'''$. Therefore $p\nmid k''-k'''$ (because otherwise $p\mid k''$), and hence, by the first equality in~\eqref{pepit4}, we have $q=q'=r-\eta+u$. This finishes the proof of the claim. Note moreover that, by~\eqref{pepit2}
\begin{equation}\label{pepit3}
p^{\eta-q+q'}\kappa k'''= c(ba-a+1) = c(ba+a-1) - 2c(a-1) = c(b+a-1) - 2c(a-1) = c(b-a+1),
\end{equation}
which implies that if $\eta-q+q'<r$, then $q'$ is the largest~inte\-ger lesser or equal than $r$, such that
\begin{equation*}\label{pepito4'''}
p^{q'}\mid b-a+1.
\end{equation*}
This also is true when $\eta-q+q'\ge r$. Indeed, in this case $q'>q$, and hence $\beta+r-\eta+u = 2\beta \ge r$. Consequently
$$
b-a+1 = p^{\beta}k' - p^{r-\eta + u}s = ba-a+1 = p^{q'}k''',
$$
as desired. Note finally that, since $\eta<r$, from~\eqref{tres casos} it follows that $u<\min(q,q',\beta)$.
\end{remark}

\begin{remark}\label{cond si q<eta} Since $r-\eta\le \beta$, we have
$$
p^{r-\eta}\mid p^{\beta}k'(1+p^{r-\eta} k)+p^{r-\eta} k = ba+a-1\quad\text{and}\quad p^{r-\eta}\mid p^{\beta}k'(1+p^{r-\eta} k)-p^{r-\eta} k = ba-a+1.
$$
Hence $r-\eta\le \min(q,q',\beta)$. Consequently, if $q<\eta$, then $r<2\eta$.
\end{remark}

\smallskip

In the following theorem we compute the extensions classes of $H$ by $I$.

\begin{theorem}\label{prop 4.10'} Under the hypothesis established at the beginning of this subsection three distinct cases arise:
\begin{description}[font=\normalfont\scshape, leftmargin=0cm]

\item[\underline{$p=2$}] Then $(b,a)\in\{(0,1),(0,3),(2,1),(2,3)\}$ and the following facts hold:

\smallskip

\begin{itemize}[topsep=0pt,itemsep=1.7pt]

\item[-] When $(b,a) = (0,1)$, a complete family $(\alpha_{\gamma},f_{\mathfrak{f}_0})$, of representatives of $2$-cocycles modulo coboundaries in~the~com\-plex $\wh{\mathcal{C}}_N^*(H,I)$, is parameterized by the pairs $(\gamma,\mathfrak{f}_0)\in\{(0,0),(0,2),(1,0),(1,2)\}$.

\item[-] When $(b,a) = (0,3)$, a complete family $(\alpha_{\gamma},f_{\mathfrak{f}_0})$, of representatives of $2$-cocycles modulo coboundaries in~the~com\-plex $\wh{\mathcal{C}}_N^*(H,I)$, is parameterized by the pairs $(\gamma,\mathfrak{f}_0)\in\{(0,0),(2,0),(1,1),(3,1)\}$.

\item[-] When $(b,a) = (2,1)$, a complete family $(\alpha_{\gamma},f_{\mathfrak{f}_0})$, of representatives of $2$-cocycles modulo coboundaries in~the~com\-plex $\wh{\mathcal{C}}_N^*(H,I)$, is parameterized by the pairs $(\gamma,\mathfrak{f}_0)\in\{(0,0),(2,0)\}$.

\item[-] When $(b,a) = (2,3)$, a complete family $(\alpha_{\gamma},f_{\mathfrak{f}_0})$, of representatives of $2$-cocycles modulo coboundaries in~the~com\-plex $\wh{\mathcal{C}}_N^*(H,I)$, is parameterized by the pairs $(\gamma,\mathfrak{f}_0)\in\{(0,0),(2,2),(1,1),(3,3)\}$.

\end{itemize}

\medskip

\item[\underline{$p$ odd and $\eta\le q$}] This case is divided in two subcases, namely $q'\le \beta$ and $q'>\beta$.

\smallskip

\begin{itemize}[topsep=0pt,itemsep=1.7pt]

\item[-] When $q'\le \beta$, a complete family $(\alpha_{\gamma},f_{\mathfrak{f}_0})$, of representatives of $2$-cocycles modulo coboundaries in $\wh{\mathcal{C}}_N^*(H,I)$, is~pa\-ram\-eterized by the pairs
$$
\qquad (\gamma,\mathfrak{f}_0)\in \{(p^{r-q'}z_1,p^{r-\eta}z_2)\in \mathbb{Z}_{p^r}\oplus \mathbb{Z}_{p^r}:0\le z_1<p^{\eta-r+q'}\text{ and } 0\le z_2<p^{\eta}\}.
$$

\item[-] When $q' > \beta$, we have $a\ne 1$, $q=\beta$ and a complete family $(\alpha_{\gamma},f_{\mathfrak{f}_0})$, of representatives of $2$-cocycles modulo~co\-boundaries in $\wh{\mathcal{C}}_N^*(H,I)$,~is~pa\-ram\-eterized by the pairs
$$
\qquad (\gamma,\mathfrak{f}_0)\in \{(p^{r-q'}z_1,p^{r-q'+\beta-\eta}(k''-k')z_1 + p^{r-\eta} z_2)\in \mathbb{Z}_{p^r}\oplus \mathbb{Z}_{p^r}: 0\le z_1<p^{\eta-r+q'} \text{ and } 0\le z_2 < p^{\eta}\}.
$$

\end{itemize}

\medskip

\item[\underline{$p$ odd and $q<\eta$}] This case is divided in five subcases: $q=q'=\beta$, $q=q'<\beta$, $q'=\beta<q$, $q=\beta<q'\le r-\eta +\beta$ and $q=\beta<q'>r-\eta +\beta$.

\smallskip

\begin{itemize}[topsep=0pt,itemsep=1.7pt]

\item[-] When $q=q'= \beta$, a complete family $(\alpha_{\gamma},f_{\mathfrak{f}_0})$, of representatives of $2$-cocycles modulo coboundaries in $\wh{\mathcal{C}}_N^*(H,I)$, is pa\-ram\-eterized by the pairs
$$
\qquad (\gamma,\mathfrak{f}_0)\in \{(p^{r-q'}z_1,p^{r-\eta}z_2)\in \mathbb{Z}_{p^r}\oplus \mathbb{Z}_{p^r}:0\le z_1<p^{q'}\text{ and } 0\le z_2<p^{\beta+\eta-r}\}.
$$

\item[-] When $q=q'<\beta$, we have $a\ne 1$, $r-q= \eta-u$ and a complete family $(\alpha_{\gamma},f_{\mathfrak{f}_0})$, of representatives of $2$-cocycles modulo coboundaries in $\wh{\mathcal{C}}_N^*(H,I)$, is pa\-ram\-eterized by the pairs
$$
\qquad (\gamma,\mathfrak{f}_0)\in \{(p^{r-q'}z_1,p^{r-\eta}z_2)\in \mathbb{Z}_{p^r}\oplus \mathbb{Z}_{p^r}:0\le z_1<p^{q'}\text{ and } 0\le z_2<p^u\}.
$$

\item[-] When $q'=\beta<q$, then $a\ne 1$, $r-q' = \eta-u$ and a complete family $(\alpha_{\gamma},f_{\mathfrak{f}_0})$, of representatives of $2$-cocycles modulo coboundaries in $\wh{\mathcal{C}}_N^*(H,I)$, is pa\-ram\-eterized by the pairs
$$
\qquad (\gamma,\mathfrak{f}_0)\in \{(p^{r-q'}z_1,p^{r-\eta}z_2)\in \mathbb{Z}_{p^r}\oplus \mathbb{Z}_{p^r}:0\le z_1<p^{q'}\text{ and } 0\le z_2<p^{q-r+\eta}\}.
$$

\item[-] When $q=\beta<q'$ and $q'\le r-\eta+\beta$, then $a\ne 1$, $r-q = \eta-u$ and a complete family $(\alpha_{\gamma},f_{\mathfrak{f}_0})$, of representatives of $2$-cocycles modulo coboundaries in $\wh{\mathcal{C}}_N^*(H,I)$, is pa\-ram\-eterized by the pairs
$$
\qquad (\gamma,\mathfrak{f}_0)\in \{(p^{r-q'}z_1,p^{r+\beta-q'-\eta}k'z_1 + p^{r-\eta}z_2)\in \mathbb{Z}_{p^r}\oplus \mathbb{Z}_{p^r}:\text{$0\le z_1<p^{\eta-r+q'}$ and $0\le z_2<p^q$}\}.
$$

\item[-] When $q=\beta<q'$ and $q' > r-\eta+\beta$, then $a\ne 1$, $r-q = \eta-u$ and a complete family $(\alpha_{\gamma},f_{\mathfrak{f}_0})$, of representatives of $2$-cocycles modulo coboundaries in $\wh{\mathcal{C}}_N^*(H,I)$, is pa\-ram\-eterized by the pairs
$$
\qquad (\gamma,\mathfrak{f}_0)\in \{(p^{\eta-\beta}z_1,k'z_1 + p^{r-\eta} z_2)\in \mathbb{Z}_{p^r}\oplus \mathbb{Z}_{p^r}:\text{$0\le z_1,z_2<p^{\beta}$}\}.
$$
\end{itemize}

\end{description}
\end{theorem}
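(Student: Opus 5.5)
The goal is to describe $\ker T_1\cap\ker T_2$ modulo $\ima S$ explicitly in each case, and then transport the answer to $\Ho^2_{\blackdiamond,\Yleft}(H,I)$ through the isomorphism $\ov{T}$ of~\eqref{formula T barra}. In every case I will work inside $\mathbb{Z}_{p^r}$, where $A$ and $B$ are multiplication by $a=1+p^{r-\eta}k$ and $b=p^{\beta}k'$. The split follows the statement.

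For $\eta\le q$ we are in case~b) of Subsubsection~\ref{two particular cases}, with $c=p^{q-\eta}k''$ (Remark~\ref{caso eta le q}), so~\eqref{pepito5} applies. The constants $\sum_{\ell=0}^{d-1}\binom{d}{\ell+1}c^\ell d^\ell$ will be evaluated modulo $p^r$ using Remark~\ref{valuacion de p en combinatorio}. Since $p$ is odd and $p^\eta\mid cd$, only the terms $\ell=0,1$ should survive, giving $d+\binom{d}{2}cd\equiv d$ up to a term of valuation at least $2\eta+(q-\eta)$ that I will track carefully.

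This reduces $\wt T_1(x,y)=bx+p^\eta y$ and $\wt T_2(x,y)=p^\eta(y+cx)-bx$, with $\wt S(t)=(p^\eta t,0)$. Adding the two kernel conditions gives $p^\eta c x=-2p^{\eta}y+\dots$, i.e.\ conditions of the form $(b+a-1)x$ and $(b-a+1)x$ vanishing, which brings in $q'$ (using $p^{q'}k'''=2b-p^qk''$ from~\eqref{pepito3'N}). I then solve for $x$: it must be divisible by $p^{r-q'}$ and is reduced modulo $p^\eta$. Next I solve for $y$ in terms of $x$, up to $p^{r-\eta}\mathbb{Z}_{p^r}$. Finally I pull back by $\Phi(x,y)=(x,y+cx)$ to get $(\gamma,\mathfrak{f}_0)$.

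The subcase $q'>\beta$ is where $y$ must be shifted by a particular solution. That solution is $p^{r-q'+\beta-\eta}(k''-k')z_1$, and I will check it directly by substitution. The facts $q=\beta$ and $a\ne 1$ follow from~\eqref{tres casos'}.

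For $q<\eta$ we are in case~a), with $c=p^{\eta-q}\kappa$ (Remark~\ref{caso q < eta}), so~\eqref{eq2}--\eqref{eq5} apply. These give $\wt T_1(x,y)=(1-a)x+cby$ and $\wt S(t)=(0,p^qk''t)$. For $\wt T_2$, I expand $b\sum jA^j+N(a)$ as in the proof of Theorem~\ref{prop 4.10}, but now with $p^{r-\eta}$ in place of $p$. Combined with $cba=cb$ and $cb^2=0$, the binomial valuation estimate should collapse the sum to $p^\eta$ times a unit plus corrections. The result is a $2\times2$ linear system over $\mathbb{Z}_{p^r}$ with entries of known valuations $r-\eta+u$, $\beta$, $q$, $q'$.

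I will solve this system subcase by subcase using~\eqref{tres casos}, together with $u<\min(q,q',\beta)$ and $r-\eta\le\min(q,q',\beta)$ from Remark~\ref{cond si q<eta}. In each subcase the $\gamma$-component is free in $p^{r-q'}\mathbb{Z}$ (or $p^{\eta-\beta}\mathbb{Z}$), and the $\mathfrak{f}_0$-component ranges over a coset of $p^{r-\eta}\mathbb{Z}_{p^r}$ modulo $\ima\wt S$. The coset is nontrivial exactly when $q=\beta<q'$. Pulling back through $\Phi(x,y)=(x+cy,y)$ then gives the stated parametrizations.

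The case $p=2$, $(\eta,r)=(1,2)$ is a finite check over four pairs $(b,a)$. I will compute $T_1$, $T_2$ and $S$ from~\eqref{T_1, T_2 y S} directly with $d=2$ and list the kernel modulo the image.

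The main obstacle is the $q<\eta$ case, specifically the two hardest subcases: $q=\beta<q'$, split according to $q'$ versus $r-\eta+\beta$. The reductions of $\wt T_2$ leave cross terms whose valuations must be compared precisely, and finding the correct particular solution (the $k'z_1$ shift) requires inverting a unit modulo a varying power of $p$. I would first try to diagonalize the system by an explicit change of variables, adding a suitable unit multiple of one equation to the other. Only then would I read off the kernel and verify the representatives are pairwise inequivalent by a counting argument on orders.
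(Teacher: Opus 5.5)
Your plan follows the paper's proof essentially step by step: the same case split, the same use of cases a) and b) with $c=p^{q-\eta}k''$ and $c=p^{\eta-q}\kappa$, and the same binomial valuation estimates showing that the relevant sums reduce exactly to $p^{\eta}$. It also uses the same diagonalizing step, which in the paper is $\wt{T}_1-\wt{T}_2=(b-a+1)(cy+x)$ after rewriting with $cb=p^{\eta}-c(a-1)$, followed by pullback through $\Phi$ and a cardinality count plus pairwise inequivalence to certify completeness.

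A few small corrections:
\begin{enumerate}
\item For $\eta\le q$ the kernel conditions are $(b-a+1)x=0$ and $p^{\eta}y=-bx$. The quantity $(b+a-1)x$ equals $-2p^{\eta}y$ and need not vanish.
\item Discarding the $\ell\ge 1$ terms of $\sum_{\ell}\binom{d}{\ell+1}c^{\ell}d^{\ell}$ requires $q\ge r-\eta$ in addition to $q\ge\eta$.
\item In the subcases $q=\beta<q'$ (and $\eta\le q<q'$), the stated families take $\gamma$ only modulo $p^{\eta}$ rather than freely in $p^{r-q'}\mathbb{Z}_{p^r}$.
\end{enumerate}
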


\begin{proof} Let $a,b\in \mathbb{Z}_{p^r}$ satisfying~\eqref{pep2pr}. Our aim is to determine a complete set of representatives of $\ker T_1\cap \ker T_2$~mod\-ule~$\ima S$. For the sake of brevity, when defining the set of all $(x,y)\in \mathbb{Z}_{p^r}\oplus \mathbb{Z}_{p^r}$ that satisfy a property $P$, we will denote it by $\{(x,y):P\}$ rather than $\{(x,y)\in \mathbb{Z}_{p^r}\oplus \mathbb{Z}_{p^r}:P\}$. We divide the proof in three cases.

\smallskip

\noindent\underline{\textsc{First case:} $p=2$}\enspace An straightforward computation using~\eqref{T_1, T_2 y S} proves that
\begin{align*}
& T_1(\gamma,f_0) = \begin{cases} 2f_0 &\text{if $a=1$,}\\2f_0-2\gamma &\text{if $a=3$,} \end{cases}\\
& T_2(\gamma,f_0) = \begin{cases} (b+2)f_0-b\gamma &\text{if $a=1$,}\\bf_0-b\gamma &\text{if $a=3$,} \end{cases}
\shortintertext{and}
& S(t) = \begin{cases} (2t,bt) &\text{if $a=1$,}\\ (2t,bt+2t) &\text{if $a=3$.}\end{cases}
\end{align*}
Therefore,
$$
\ker(T_1)\cap \ker(T_2) = \begin{cases} \mathbb{Z}_4\oplus 2\mathbb{Z}_4 &\text{if $a=1$ and $b=0$,}\\ 2\mathbb{Z}_4\oplus 2\mathbb{Z}_4 = \langle (2,2) \rangle\oplus \langle (2,0) \rangle &\text{if $a=1$ and $b=2$,}\\ \{(\gamma,f_0):2\mid f_0-\gamma\} = \langle (1,1) \rangle\oplus \langle (2,0) \rangle  &\text{if $a=3$}\end{cases}
$$
and
$$
\ima(S) = \begin{cases} 2\mathbb{Z}_4\oplus 0 &\text{if $a=1$ and $b=0$,}\\ \langle (2,2) \rangle  &\text{if $a=1$ and $b=2$,}\\ \langle (2,2) \rangle &\text{if $a=3$ and $b=0$,}\\ \langle (2,0) \rangle  &\text{if $a=3$ and $b=2$.}\end{cases}
$$
Hence,
\begin{equation}\label{como p=2 eta=1 r=2}
\Ho^2_{\blackdiamond,\Yleft}(H,I)\simeq \begin{cases} \frac{\mathbb{Z}_4}{2 \mathbb{Z}_4}\oplus 2\mathbb{Z}_4 = \frac{\langle (1,0)\rangle}{\langle (2,0)\rangle} \oplus \langle (0,2)\rangle &\text{if $a=1$ and $b=0$,}\\ \langle (2,0)\rangle &\text{if $a=1$ and $b=2$,}\\ \frac{\langle (1,1)\rangle}{\langle (2,2)\rangle} \oplus \langle (2,0)\rangle &\text{if $a=3$ and $b=0$,} \\ \langle (1,1)\rangle  &\text{if $a=3$ and $b=2$.}\end{cases}
\end{equation}
The assertions for $p=2$, follow immediately from this fact.

\smallskip

\noindent\underline{\textsc{Second case:} $p$ odd and $\eta\le q$}\enspace By Remark~\ref{caso eta le q}, we are in case~b) of Subsubsection~\ref{two particular cases} with
\[
d \coloneqq p^{\eta} \qquad\text{and}\qquad c \coloneqq p^{q-\eta}k''.
\]
Let~$\wt{T}_1$, $\wt{T}_2$ and~$\wt{S}$ be the maps define by
$$
\wt{T}_1\coloneqq T_1\xcirc \Phi,\quad \wt{T}_2\coloneqq T_2\xcirc \Phi \quad\text{and}\quad  \wt{S}\coloneqq \Phi^{-1}\xcirc S,
$$
where $\Phi\colon I^2\to I^2$ is the map given by $\Phi(x,y)\coloneqq (x,cx+y)$. As we saw in~\eqref{Phi barra'}, the map $\Phi$ induces an iso\-morphism
\begin{equation}\label{Phi barra}
\ov{\Phi}\colon \frac{\ker \wt{T}_1\cap \ker \wt{T}_2}{\ima \wt{S}}\longrightarrow \frac{\ker T_1\cap \ker T_2}{\ima S}\simeq \Ho^2_{\blackdiamond,\Yleft}(H,I).
\end{equation}
Moreover, by~\eqref{pepito5}, we have
\begin{equation}\label{wt(S), wt{T}_1 y wt{T}_2}
\wt{T}_1(x,y)= bx+p^{\eta}y,\quad\wt{T}_2(x,y)= \sum_{\ell=0}^{p^{\eta}-1} \binom{p^{\eta}}{\ell+1} c^{\ell} p^{\eta \ell}(y+cx)- bx \quad \text{and}\quad \wt{S}(t) = (p^{\eta}t,0)
\end{equation}
We claim that
\begin{equation}\label{pepito1'}
\wt{T}_2(x,y) = - bx + p^{\eta}cx + p^{\eta}y.
\end{equation}
Since $c=p^{q-\eta}k''$, in order to verify the claim in this case, it suffices to show that
$$
p^r\mid \binom{p^{\eta}}{\ell+1} p^{q\ell}\qquad\text{for $\ell\ge 1$.}
$$
For $\ell = 1$ this holds since $p^{\eta}\mid \binom{p^{\eta}}{2}$ and $q\ge r-\eta$ (see~\eqref{un caso'} and~\eqref{tres casos'}); while, for $\ell \ge 2$, this follows from the fact that $q\ge r-\eta$ and $q\ge \eta $, which implies that $q\ell \ge r-\eta +\eta \ge r$. We next compute
$$
K\coloneqq \ker\wt{T}_1\cap \ker\wt{T}_2.
$$
By~\eqref{wt(S), wt{T}_1 y wt{T}_2} and~\eqref{pepito1'},
$$
(x,y)\in K \Longleftrightarrow bx = -p^{\eta}y \text{ and } bx = p^{\eta}y + p^{\eta} cx \Longleftrightarrow p^{\eta}y = -p^{\beta}k'x \text{ and } p^qk'' x - 2p^{\beta}k'x = 0.
$$
Hence, by~\eqref{pepito3'N},
\begin{equation}\label{pepito6'}
(x,y)\in K \Longleftrightarrow p^{\eta}y=-p^{\beta}k'x \text{ and } p^{q'}x = 0 \Longleftrightarrow x\in p^{r-q'}\mathbb{Z}_{p^r}\text{ and } p^{\eta}y = -p^{\beta}k'x.
\end{equation}
Consequently, if $q'\le \beta$, then
\begin{equation}\label{pepito4'}
(x,y)\in K \Longleftrightarrow x\in p^{r-q'} \mathbb{Z}_{p^r} \text{ and } p^{\eta} y = 0 \Longleftrightarrow (x,y)\in p^{r-q'} \mathbb{Z}_{p^r} \oplus p^{r-\eta} \mathbb{Z}_{p^r}.
\end{equation}
Since $\ima \wt{S}\subseteq K$, this implies that $r-q'\le\eta$ and
\begin{equation}\label{para homologia 1}
\frac{K}{\ima \wt{S}} = \frac{p^{r-q'} \mathbb{Z}_{p^r}}{p^{\eta}\mathbb{Z}_{p^r}}  \oplus p^{r-\eta} \mathbb{Z}_{p^r}.
\end{equation}
Therefore,
$$
K'\coloneqq \{(p^{r-q'}z_1,p^{r-\eta}z_2):0\le z_1<p^{\eta-r+q'}\text{ and } 0\le z_2<p^{\eta}\}
$$
is a complete family of rep\-resentatives of $K$ module $\ima \wt{S}$. Hence
$$
\Phi(K') = \{(p^{r-q'}z_1,p^{r-q'+q-\eta}k'' z_1 + p^{r-\eta}z_2):0\le z_1<p^{\eta-r+q'}\text{ and } 0\le z_2<p^{\eta}\}
$$
is a complete family of representatives of $\ker T_1\cap \ker T_2$ module $\ima S$. But, by conditions~\eqref{un caso'} and~\eqref{tres casos'}, the~in\-equality $q' \le \beta$ immediately yields $q' \le q$, which implies $r-q'+q-\eta \ge r-\eta$. Consequently, $\Phi(K') = K'$. The assertion in the case $p$ odd, $\eta\le q$ and $q'\le \beta$, follows directly from this fact. It remains to consider the case $\beta < q'$, which by Remark~\ref{caso eta le q} implies that $a\ne 1$ and $r-\eta+u=\beta=q$. Consequently,
\begin{equation}\label{pep3}
0\le r-q' < r-\beta \le r-\beta+u = \eta.
\end{equation}
Take $(x,y)\in K$. By condition~\eqref{pepito6'} there exists $x'\in \mathbb{Z}_{p^r}$ such that $x = p^{r-q'}x'$, and we have
\begin{equation}\label{pepito2'}
p^{\eta} y = -p^{\beta}k'x = - p^{r-q'+\beta} k'x'.
\end{equation}
Since $r-q'+\beta\ge\beta \ge \eta$, this happens if and only if $y\in - p^{r-q'+\beta-\eta} k'x'+ p^{r-\eta} \mathbb{Z}_{p^r}$. Therefore, again by~\eqref{pepito6'},
\begin{equation}\label{pepito5'}
(x,y)\in K \Longleftrightarrow x = p^{r-q'} x'\text{ and } y = - p^{r-q'+\beta-\eta} k'x' + p^{r-\eta} y' \quad \text{for some $0\le x'<p^{q'}$ and $0\le y'<p^{\eta}$}.
\end{equation}
Note that $|K| = p^{q'+\eta}$. Take $(x_1,y_1),(x_2,y_2)\in K$ and write
$$
x_1 = p^{r-q'} x'_1,\quad x_2 = p^{r-q'} x'_2,\quad y_1 = -p^{r-q'+\beta-\eta}k' x'_1 + p^{r-\eta}y'_1\quad\text{and}\quad y_2 = -p^{r-q'+\beta-\eta}k' x'_2 + p^{r-\eta}y'_2,
$$
with $0\le x_1',x_2'<p^{q'}$ and $0\le y'_1,y'_2<p^{\eta}$. We assert that the following facts are equivalent:

\begin{enumerate}

\item $(x_1,y_1)\equiv (x_2,y_2)\mod{\ima\wt{S}}$.

\item There exists $z\in \mathbb{Z}_{p^r}$ such that $x'_2-x'_1 = p^{\eta-r+q'} z$ and $y'_2-y'_1 - p^u k' z \in  p^{\eta} \mathbb{Z}_{p^r}$.

\item $x'_2-x'_1 \in p^{\eta-r+q'} \mathbb{Z}_{p^r}$, and for all $z$ such that $x'_2-x'_1 = p^{\eta-r+q'} z$, we have $y'_2-y'_1 - p^u k' z \in  p^{\eta} \mathbb{Z}_{p^r}$.

\end{enumerate}
In fact, $(x_1,y_1)\equiv (x_2,y_2)\mod{\ima\wt{S}}$ if and only if $x_2 - x_1 \in  p^{\eta}\mathbb{Z}_{p^r}$ and $y_2=y_1$. Note also that, by~\eqref{pep3},
$$
x_2 - x_1 \in  p^{\eta}\mathbb{Z}_{p^r} \Longleftrightarrow x'_2-x'_1 \in p^{\eta-r+q'} \mathbb{Z}_{p^r} + p^{q'}\mathbb{Z}_{p^r} = p^{\eta-r+q'} \mathbb{Z}_{p^r},
$$
and write $x'_2-x'_1 = p^{\eta-r+q'} z$ with $z\in \mathbb{Z}_{p^r}$. Then
$$
y_2 = -p^{r-q'+\beta-\eta}k' x'_2 + p^{r-\eta}y'_2 = -p^{r-q'+\beta-\eta}k' x'_1 -p^{\beta}k'z + p^{r-\eta}y'_2,
$$
and so
$$
0 = y_1-y_2 = p^{\beta} k' z + p^{r-\eta} (y'_1-y'_2) = p^{r-\eta} (p^u k' z + y'_1-y'_2),
$$
where the last equality holds, since $\beta = r-\eta+u$. But this happens if and only if $y'_2-y'_1 - p^u k' z \in  p^{\eta} \mathbb{Z}_{p^r}$. Thus 1) implies 3). It is clear that~3) implies~2). Assume now that~2) is true and write $y'_2 = y'_1 + p^uk'z+ p^{\eta}z'$ with $z'\in \mathbb{Z}_{p^r}$. Then
\begin{align*}
& x_2-x_1 = p^{r-q'}p^{\eta-r+q'} z = p^{\eta}z\\
\shortintertext{and}
& y_2 = -p^{r-q'+\beta-\eta}k' x'_2 + p^{r-\eta}y'_2\\
&\phantom{y_2} = -p^{r-q'+\beta-\eta}k' x'_1 - p^{\beta}k' z + p^{r-\eta}y'_1  + p^{r-\eta+u} k'z\\
&\phantom{y_2} = -p^{r-q'+\beta-\eta}k' x'_1 + p^{r-\eta}y'_1 &&\text{since $\beta = r-\eta+u$}\\
&\phantom{y_2} = y_1,
\end{align*}
which proves that~2) implies~1). We claim that
\begin{equation}\label{para despues0}
K'\coloneqq \{(x,y): x = p^{r-q'}x'\text{ and } y = -p^{r-q'+\beta-\eta}k'x' + p^{r-\eta} y'\quad\text{with $0\le x'<p^{\eta-r+q'}$ and $0\le y'<p^{\eta}$}\},
\end{equation}
is a complete family of represents of $K$, module $\ima\wt{S}$. Since $|K| = p^{q'+\eta}$ and $|\ima \wt{S}| = p^{r-\eta}$, we know that
$$
\left|\frac{K}{\ima\wt{S}}\right| = p^{q'+2\eta-r} = |K'|.
$$
Hence, the claim follows, because, by item~3), if \hbox{$(x_1,y_1),(x_2,y_2)\in K'$} are equivalent module $\ima \wt{S}$, then they coincide. Consequently, since $c = p^{q-\eta}k'' = p^{\beta-\eta}k''$, we conclude that
$$
\Phi(K') = \{(p^{r-q'}z_1,p^{r-q'+\beta-\eta}(k''-k') z_1 + p^{r-\eta} z_2): 0\le z_1<p^{\eta-r+q'} \text{ and } 0\le z_2 < p^{\eta}\}
$$
is a complete family of representatives of $\ker T_1\cap \ker T_2$ module $\ima S$. Hence, the assertion in the case $p$ odd, $\eta\le q$ and $\beta<q'$, is true.

\smallskip

\noindent\underline{\textsc{Third case:} $p$ odd and $q<\eta$}\enspace By Remark~\ref{caso q < eta}, we are in the case~a)~of~Sub\-sub\-section~\ref{two particular cases}, with
\begin{equation}\label{para despues3}
d\coloneqq p^{\eta}\quad\text{and}\quad c\coloneqq p^{\eta-q}\kappa,
\end{equation}
where $\kappa\in \mathbb{Z}_{p^r}$ satisfies $p^{\eta}\kappa k''=p^{\eta}$. Let $\wt{T}_1$, $\wt{T}_2$ and~$\wt{S}$ be the maps define by
$$
\wt{T}_1\coloneqq T_1\xcirc \Phi,\quad \wt{T}_2\coloneqq T_2\xcirc \Phi \quad\text{and}\quad  \wt{S}\coloneqq \Phi^{-1}\xcirc S,
$$
where $\Phi\colon I^2\to I^2$ is the map given by $\Phi(x,y)\coloneqq (x+cy,y)$. As we saw in~\eqref{Phi barra1'}, the map $\Phi$ induces an iso\-morphism
\begin{equation}\label{Phi barra1''}
\ov{\Phi}\colon \frac{\ker \wt{T}_1\cap \ker \wt{T}_2}{\ima \wt{S}}\longrightarrow \frac{\ker T_1\cap \ker T_2}{\ima S}\simeq \Ho^2_{\blackdiamond,\Yleft}(H,I).
\end{equation}
Furthermore, by~\eqref{eq2}--\eqref{eq4}, \eqref{eq5} and Remark~\ref{remark 1.11'}, we have
\begin{equation}\label{eq2'}
\wt{T}_1(x,y)=(1-a)x+cby,\quad \wt{T}_2(x,y)= b\sum_{j=1}^{p^{\eta}-1} j a^jy+N(a)y-cby-bx\quad\text{and}\quad \wt{S}(t)=(0,p^qk''t).
\end{equation}
Arguing as in the proof of~\eqref{cociclo1 ej 4'} and using that $\beta+\eta\ge r$, we obtain
$$
N(a)+b\sum_{j=1}^{p^{\eta}-1} j a^j = \sum_{\ell=0}^{p^{\eta}-1} \binom{p^{\eta}}{\ell+1} p^{(r-\eta)\ell}k^{\ell} + p^{\beta}k'\sum_{\ell=1}^{p^{\eta}-1}\left((p^{\eta}-1) \binom{p^{\eta}}{\ell+1} - \binom{p^{\eta}}{\ell+2}\right) p^{(r-\eta)\ell}k^{\ell}.
$$
Write $\ell+1 = p^tv$ with $p\nmid v$ and $t\ge 0$. By Remark~\ref{valuacion de p en combinatorio}, we known that $p^{\eta-t}\mid \binom{p^{\eta}}{\ell+1}$. Therefore, the exponent $w$, of~$p$ in $\binom{p^{\eta}}{\ell+1} p^{(r-\eta)\ell}$, is at least $\eta-t+(r-\eta)\ell$. Thus, if $\ell\ge 1$, then
\begin{equation*}
w\ge \eta-t+(r-\eta)\ell = r-t+(r-\eta)(\ell-1)\ge r-t + \ell-1\ge r,
\end{equation*}
where we have used that $r-\eta>0$ and that $\ell-1 = p^tv-2\ge t$, for $\ell\ge 1$. Hence,
$$
N(a)+b\sum_{j=1}^{p^{\eta}-1} j a^j = p^{\eta} - p^{\beta}k'\sum_{\ell=1}^{p^{\eta}-1} \binom{p^{\eta}}{\ell+2} p^{(r-\eta)\ell}k^{\ell}.
$$
Writing now $\ell+2 = p^tv$ with $p\nmid v$ and $t\ge 0$, we obtain that the exponent $w$, of $p$ in $\binom{p^{\eta}}{\ell+2} p^{\beta+(r-\eta)\ell}$, is greater than or equal to $\eta-t+\beta+(r-\eta)\ell$. Since $\beta\ge r-\eta\ge 1$ and $\ell\ge 1$, we obtain
\begin{equation*}
w\ge \eta-t+\beta+(r-\eta)\ell = r-t+\beta+(r-\eta)(\ell-1)\ge r-t+\beta + \ell-1\ge r-t + \ell\ge r,
\end{equation*}
where we have used that $r-\eta>0$ and that $\ell = p^tv-2 \ge t$. Consequently,
$$
N(a)+b\sum_{j=1}^{p^{\eta}-1} j a^j = p^{\eta}.
$$
Combining this with~\eqref{eq2'}, we conclude that
\begin{equation}\label{eq3'}
\wt{T}_1(x,y)= cby-(a-1)x,\quad \wt{T}_2(x,y)=p^{\eta}y-cby-bx\quad\text{and}\quad \wt{S}(t)=(0,p^qk''t).
\end{equation}
Since $cb = p^{\eta}-c(a-1)$, we have
\begin{equation}\label{eq4'}
\wt{T}_1(x,y)= p^{\eta}y-(a-1)(cy+x)\qquad\text{and}\qquad \wt{T}_2(x,y)= p^{\eta}y-b(cy+x),
\end{equation}
which implies
\begin{equation}\label{eq5'}
\wt{T}_1(x,y)- \wt{T}_2(x,y)= (b-a+1)(cy+x).
\end{equation}
We next compute
$$
K\coloneqq \ker\wt{T}_1\cap \ker\wt{T}_2.
$$
Recall, from Remark~\ref{caso q < eta}, that $q$ and $q'$ are the largest~inte\-gers lesser or equal than $r$, such that
$$
p^q\mid b+a-1\qquad\text{and}\qquad p^{q'}\mid b-a+1.
$$
Assume first that
$$
q=q'=\beta
$$
and write
$$
b-a+1 = b k^{\mathrm{iv}} \quad\text{and}\quad b+a-1 = bk^{\mathrm{v}}\qquad\text{with $k^{\mathrm{iv}}$ and $k^{\mathrm{v}}$ units of $\mathbb{Z}_{p^r}$.}
$$
Hence, by equality~\eqref{eq5'}, we deduced that $(x,y)\in \ker(\wt{T}_1-\wt{T}_2)$ if and only if $b(cy+x) = 0$. Combining this with the second equality in~\eqref{eq4'}, we obtain that $(x,y)\in K$ if and only if
$$
b(cy+x) = 0 \qquad\text{and}\qquad p^{\eta}y = 0.
$$
Since $p^{\eta} = c(ba+a-1) = c(b+a-1) = bck^{\mathrm{v}}$, this happens if and only if $p^{\eta}y = 0$ and $bx = 0$. Thus,
$$
\ker\wt{T}_1\cap \ker\wt{T}_2 = p^{r-\beta}\mathbb{Z}_{p^r}\oplus p^{r-\eta}\mathbb{Z}_{p^r}.
$$
Hence, by the last equality in~\eqref{eq3'}, we have
\begin{equation}\label{para homologia 2}
\frac{\ker\wt{T}_1\cap \ker\wt{T}_2}{\ima{\wt{S}}} = p^{r-\beta}\mathbb{Z}_{p^r}\oplus \frac{p^{r-\eta}\mathbb{Z}_{p^r}}{p^{\beta}\mathbb{Z}_{p^r}}.
\end{equation}
Thus,
$$
K'\coloneqq \{(p^{r-\beta}z_1,p^{r-\eta}z_2):0\le z_1<p^{\beta}\text{ and } 0\le z_2<p^{\beta+\eta-r}\}
$$
is a complete family of rep\-resentatives of $K$ module $\ima \wt{S}$. Consequently
$$
\Phi(K') = \{(p^{r-\beta}z_1+p^{r-q}\kappa z_2,p^{r-\eta}z_2):0\le z_1<p^{\beta}\text{ and } 0\le z_2<p^{\beta+\eta-r}\}
$$
is a complete family of representatives of $\ker T_1\cap \ker T_2$ module $\ima S$. Since $q = \beta$, we have $\Phi(K') = K'$. The~asser\-tion in the case $p$ odd and $q=q'=\beta$, follows from this fact. Suppose now that
$$
q\ne q',\qquad q\ne \beta\qquad\text{or}\qquad q'\ne \beta.
$$
By Remark~\ref{caso q < eta}, this implies that $a\ne 1$ and one of the conditions in~\eqref{tres casos} is satisfied. Assume first that
$$
q = q' = r-\eta +u<\beta
$$
and write
$$
b-a+1 = (a-1) k^{\mathrm{iv}}\quad\text{and}\quad b+a-1 = (a-1)k^{\mathrm{v}}\qquad\text{with $k^{\mathrm{iv}}$ and $k^{\mathrm{v}}$ units of $\mathbb{Z}_{p^r}$.}
$$
By equality~\eqref{eq5'}, we deduced that $(x,y)\in \ker(\wt{T}_1-\wt{T}_2)$ if and only if $(a-1)(cy+x) = 0$. Combining this with the first equality in~\eqref{eq4'}, it follows that $(x,y)\in K$ if and only if
$$
(a-1)(cy+x) = 0 \qquad\text{and}\qquad p^{\eta}y = 0.
$$
Since $p^{\eta} = c(b+a-1) = (a-1)ck^{\mathrm{v}}$, this happens if and only if $p^{\eta}y = 0$ and $(a-1)x = 0$. Thus,
$$
\ker\wt{T}_1\cap \ker\wt{T}_2 = p^{\eta-u}\mathbb{Z}_{p^r}\oplus p^{r-\eta}\mathbb{Z}_{p^r},
$$
and so, by the last equality in~\eqref{eq3'}, we have
\begin{equation}\label{para homologia 3}
\frac{\ker\wt{T}_1\cap \ker\wt{T}_2}{\ima{\wt{S}}} = p^{\eta-u}\mathbb{Z}_{p^r}\oplus \frac{p^{r-\eta}\mathbb{Z}_{p^r}}{p^{r-\eta +u}\mathbb{Z}_{p^r}}.
\end{equation}
Therefore,
$$
K'\coloneqq \{(p^{\eta-u}z_1,p^{r-\eta}z_2):0\le z_1<p^{r-\eta+u}\text{ and } 0\le z_2<p^u\}
$$
is a complete family of rep\-resentatives of $K$ module $\ima \wt{S}$. Consequently
$$
\Phi(K') = \{(p^{\eta-u}z_1+p^{r-q}\kappa z_2,p^{r-\eta}z_2):0\le z_1<p^{r-\eta+u}\text{ and } 0\le z_2<p^u\}
$$
is a complete family of representatives of $\ker T_1\cap \ker T_2$ module $\ima S$. Since $r-q = \eta-u$, we obtain that $\Phi(K') = K'$. The assertion in the case $p$ odd, $q<\eta$ and $q = q' <\beta$, follows from this fact. Next assume that
$$
q' =\beta = r-\eta+u < q,
$$
and write
$$
b-a+1 = bk^{\mathrm{iv}}\qquad\text{with $k^{\mathrm{iv}}$ a unit of $\mathbb{Z}_{p^r}$.}
$$
By equality~\eqref{eq5'} and the second equality in~\eqref{eq4'},
$$
(x,y)\in K \Longleftrightarrow b(cy+x) = 0\text{ and } p^{\eta} y = 0 \Longleftrightarrow y\in p^{r-\eta} \mathbb{Z}_{p^r}\text{ and } cy+x \in p^{r-\beta} \mathbb{Z}_{p^r}
$$
Consequently, the map
$$
\Psi\colon \mathbb{Z}_{p^{\beta}}\oplus \mathbb{Z}_{p^{\eta}}\to K,\quad\text{given by}\quad \Psi(z_1,z_2)\coloneqq  (p^{r-\beta}z_1-cp^{r-\eta}z_2,p^{r-\eta}z_2),
$$
is an isomorphism. By the last equality in~\eqref{eq3'}, we know that $\ima \wt{S} = 0\oplus p^q \mathbb{Z}_{p^r}$. Hence,
\begin{equation}\label{para despues1}
K'\coloneqq \{(x,y): x = p^{r-\beta}z_1-cp^{r-\eta}z_2\text{ and } y = p^{r-\eta}z_2\quad\text{with $0\le z_1<p^{\beta}$ and $0\le z_2<p^{q-r+\eta}$}\}.
\end{equation}
is a complete family~of~rep\-resentatives of $K$ module $\ima \wt{S}$. Consequently
$$
\Phi(K') = \{(x,y): x = p^{r-\beta}z_1 \text{ and } y = p^{r-\eta}z_2\quad\text{with $0\le z_1<p^{\beta}$ and $0\le z_2<p^{q-r+\eta}$}\}
$$
is a complete family of representatives of $\ker T_1\cap \ker T_2$ module $\ima S$. Hence, the assertion in the case $p$ odd, $q<\eta$ and $q' =\beta < q$, is true. Assume finally that
$$
q =\beta = r-\eta+u < q'.
$$
Again by equality~\eqref{eq5'} and the second equality in~\eqref{eq4'},
\begin{equation}\label{ppal0}
(x,y)\in K \Longleftrightarrow p^{q'}(cy+x) = 0\text{ and } p^{\eta}y-p^{\beta}k'(cy+x) = 0.
\end{equation}
Note that this happens if and only if
\begin{equation}\label{ppal1}
cy+x = p^{r-q'}x'\quad\text{and}\quad p^{\eta}y-p^{r-q'+\beta}k'x' = 0, \qquad\text{for some $0\le x'<p^{q'}$}.
\end{equation}
Suppose $\eta\le r-q'+\beta$. Then
$$
p^{\eta}y-p^{r-q'+\beta}k'x' = 0 \Longleftrightarrow y-p^{r+\beta-q'-\eta}k'x'\in p^{r-\eta}\mathbb{Z}_{p^r}.
$$
Hence, in this case
\begin{equation}\label{pepe1}
K = \{(x,y): y = p^{r+\beta-q'-\eta}k'x' + p^{r-\eta}y' \text{ and } x = p^{r-q'}x'-cy\quad\text{with $0\le x'<p^{q'}$ and $0\le y'<p^{\eta}$}\}.
\end{equation}
Note that $|K|= p^{q'+\eta}$. We claim that
\begin{equation}\label{para despues2}
K'\coloneqq \{(x,y): y = p^{r+\beta-q'-\eta}k'x' + p^{r-\eta}y' \text{ and } x = p^{r-q'}x'-cy\quad\text{with $0\le x'<p^{\eta-r+q'}$ and $0\le y'<p^q$}\}
\end{equation}
is a complete family of represents of $K$, module $\ima\wt{S}$. Since $|K| = p^{q'+\eta}$ and $|\ima \wt{S}| = p^{r-q}$, we know that
$$
\left|\frac{K}{\ima\wt{S}}\right| = p^{q'+\eta+q-r} = |K'|.
$$
Therefore, to prove the claim it suffices to verify that if
$$
(x_1,y_1), (x_2,y_2)\in K'\qquad\text{and}\qquad (x_1,y_1) - (x_2,y_2) = (0,y)\quad\text{with $(0,y)\in\ima \wt{S}$,}
$$
then they coincide. Write
$$
x_1 = p^{r-q'} x'_1-cy_1,\quad x_2 = p^{r-q'} x'_2-cy_2,\quad y_1 = p^{r+\beta-q'-\eta}k' x'_1 + p^{r-\eta}y'_1\quad\text{and}\quad y_2 = p^{r+\beta-q'-\eta}k' x'_2 + p^{r-\eta}y'_2,
$$
with $0\le x_1',x_2'<p^{\eta-r+q'}$ and $0\le y'_1,y'_2<p^q$. By the last equality in~\eqref{eq3'}, we know that there exist $0\le t<p^r$ such that
\begin{align*}
& y = p^qk''t =  bat+at-t = p^{r+\beta-q'-\eta}k'(x'_1-x'_2) + p^{r-\eta}(y'_1-y'_2)
\shortintertext{and}
& 0 = p^{r-q'}(x'_1-x'_2)-c(y_1-y_2) = p^{r-q'}(x'_1-x'_2)-cy = p^{r-q'}(x'_1-x'_2)-p^{\eta}t,
\end{align*}
where we have used equality~\eqref{pepit2}. Since $\eta-r+q'>u\ge 0$ and $\eta\le r$, from the equality $p^{r-q'}(x'_1-x'_2)=p^{\eta}t$, we obtain that $p^{\eta-r+q'}\mid x'_1-x'_2$. Thus $x'_1 = x'_2$, because $0\le |x'_1-x'_2| < p^{\eta-r+q'}$. Consequently $p^{\eta}t = 0$ in $\mathbb{Z}_{p^r}$, and so there exists $0\le t'<p^{\eta}$ such that $t = p^{r-\eta}t'$. Therefore
$$
p^{r-\eta}(y'_1-y'_2) = y = p^qk''p^{r-\eta}t' = p^{r-\eta+q}k''t',
$$
and hence $p^q\mid y'_1-y'_2$ because $q\le \eta$. Since $0\le |y'_1-y'_2|<p^q$, this implies that $y=0$, which finishes the proof of the claim. So,
$$
\Phi(K') = \{(x,y): x = p^{r-q'}x' \text{ and } y = p^{r+\beta-q'-\eta}k'x' + p^{r-\eta}y'\quad\text{with $0\le x'<p^{\eta-r+q'}$ and $0\le y'<p^q$}\}
$$
is a complete family of representatives of $\ker T_1\cap \ker T_2$ module $\ima S$. This concludes the proof that the assertion in the case $p$ odd, $q<\eta$, $q =\beta < q'$ and $q'\le r-\eta+\beta$, is true. Suppose now that $r-q'+\beta<\eta$ and let $x$, $y$ and $x'$ be as in~\eqref{ppal0} and~\eqref{ppal1}. Note that,
$$
p^{\eta}y-p^{r-q'+\beta}k'x' = 0 \Longleftrightarrow p^{\eta-r-\beta+q'}y-k'x'\in p^{q'-\beta}\mathbb{Z}_{p^r}.
$$
Clearly the condition on the right side of this equivalence is satisfied if and only
\begin{equation}\label{pepe0}
k'x' = p^{\eta-r-\beta+q'}y- p^{q'-\beta}y',\qquad\text{for some $0\le y'<p^{r+\beta-q'}$},
\end{equation}
which implies that $x' \in  p^{\eta-r-\beta+q'} \mathbb{Z}_{p^r}$ because $k'\in \mathbb{Z}_{p^r}$ is invertible. Since, moreover $0\le x'<p^{q'}$, we can write
$$
x' = p^{\eta-r-\beta+q'}t\quad\text{with $0\le t < p^{r+\beta-\eta}$.}
$$
Since, by~\eqref{ppal1}
$$
cy+x = p^{\eta-\beta}t\qquad\text{and}\qquad p^{\eta}(y-k't) = p^{\eta}y-p^{r-q'+\beta}k'x' = 0,
$$
we have
\begin{equation}\label{pepe2}
K = \{(x,y): y = k't + p^{r-\eta} t' \text{ and } x = p^{\eta-\beta}t-cy\quad\text{with $0\le t<p^{r+\beta-\eta}$ and $0\le t'<p^{\eta}$}\},
\end{equation}
Note that $|K|= p^{r+\beta}$. We claim that
\begin{equation}\label{para despues4}
K'\coloneqq \{(x,y): y = k't + p^{r-\eta} t' \text{ and } x = p^{\eta-\beta}t-cy\quad\text{with $0\le t,t'<p^{\beta}$}\},
\end{equation}
is a complete family of represents of $K$, module $\ima\wt{S}$. Since $|K| = p^{r+\beta}$ and $|\ima \wt{S}| = p^{r-q}$, we know that
$$
\left|\frac{K}{\ima\wt{S}}\right| = p^{2\beta} = |K'|.
$$
Therefore, to prove the claim it suffices to verify that if
$$
(x_1,y_1), (x_2,y_2)\in K'\qquad\text{and}\qquad (x_1,y_1) - (x_2,y_2) = (0,y)\quad\text{with $(0,y)\in\ima \wt{S}$,}
$$
then they coincide. Write
$$
x_1 = p^{\eta-\beta} t_1-cy_1,\quad x_2 = p^{\eta-\beta} t_2-cy_2,\quad y_1 = k't_1 + p^{r-\eta}t'_1\quad\text{and}\quad y_2 = k't_2 + p^{r-\eta}t'_2,
$$
with $0\le t_1,t_2<p^{\beta}$ and $0\le t'_1,t'_2<p^q$. By the last equality in~\eqref{eq3'}, we know that there exist $t\in \mathbb{Z}_{p^r}$ such that
\begin{align*}
& y = p^qk''t = bat+at-t = k'(t_1-t_2) + p^{r-\eta}(t'_1-t'_2)
\shortintertext{and}
& 0 = p^{\eta-\beta}(t_1-t_2)-c(y_1-y_2) = p^{\eta-\beta}(t_1-t_2)-cy = p^{\eta-\beta}(t_1-t_2)-p^{\eta}t,
\end{align*}
where we have used~\eqref{pepit2}. Since $\eta\le r$, from the equality $p^{\eta-\beta}(t_1-t_2)=p^{\eta}t$, we obtain that $p^{\beta}\mid t_1-t_2$.~Consequent\-ly $t_1 = t_2$, because $0\le |t_1-t_2| < p^{\beta}$. Thus $p^{\eta}t = 0$ in $\mathbb{Z}_{p^r}$, and so there exists $0\le t'<p^{\eta}$ such that $t = p^{r-\eta}t'$. Therefore
$$
p^{r-\eta} (t'_1-t'_2) = y = bat+at-t = p^qk''p^{r-\eta}t' = p^{r-\eta+q}k''t'.
$$
Since $0\le |t'_1-t'_2|<p^q$, this implies that $y=0$, which finishes the proof of the claim. Hence
$$
\Phi(K') = \{(x,y): x = p^{\eta-\beta}t \text{ and } y = k't + p^{r-\eta}t'\quad\text{with $0\le t,t'<p^{\beta}$}\}
$$
is a complete family of representatives of $\ker T_1\cap \ker T_2$ module $\ima S$. This concludes the proof that the assertion in the case $p$ odd, $q<\eta$, $q =\beta < q'$ and $r-\eta+\beta<q'$, is true.
\end{proof}

\begin{remark} By Remark~\ref{cond si q<eta}, in the case $p$ odd and $q<\eta$, we have $r<2\eta$.
\end{remark}

\begin{remark} The formulas for $\blackdiamond$, $\Yleft$ and $f_{\mathfrak{f}_0}$ were presented in~\eqref{pep2pr}, \eqref{accion1 ej 4'} and~\eqref{cociclo1 ej 4'}.
Under appropriate assumptions, these expressions simplify substantially. For example, when $p=2$
$$
h\blackdiamond y = \begin{cases} y & \text{if $a=1$,}\\ 3^h y &\text{if $a=3$,}\end{cases}\qquad y\Yleft h = \begin{cases} 0 & \text{if $b=0$,}\\ 2hy &\text{if $b=2$}\end{cases} \qquad\text{and}\qquad f_{\mathfrak{f}_0}(h,h') = \mathfrak{f}_0hh',
$$
while, when $p$ is odd and $r\ge 2\eta$
$$
h\blackdiamond y = y + h p^{r-\eta} k y\qquad\text{and}\qquad f_{\mathfrak{f}_0}(h,h') = \mathfrak{f}_0 hh'+ \mathfrak{f}_0 p^{r-\eta} k \binom{h}{2} h' + \mathfrak{f}_0 b \binom{h}{2} h'.
$$
Finally, when $a=1$,
$$
h\blackdiamond y = y \qquad\text{and}\qquad f_{\mathfrak{f}_0}(h,h') = \mathfrak{f}_0 hh' +\mathfrak{f}_0 b \binom{h}{2} h'.
$$
We want to point out that in all formulas for $f_{\mathfrak{f}_0}(h,h')$ we assume that $0\le h,h'<p^{\eta}$.
\end{remark}

\begin{remark} As we saw in~\eqref{Phi barra1'} and~\eqref{Phi barra'}, in all the cases considered in Theorem~\ref{prop 4.10'}
$$
\Ho^2_{\blackdiamond,\Yleft}(H,I)\simeq \frac{\ker T_1\cap \ker T_2}{\ima S}\simeq \frac{\ker \wt{T}_1\cap \ker \wt{T}_2}{\ima \wt{S}},
$$
where $T_1$, $T_2$ and $S$ are as in~\eqref{T_1, T_2 y S}, and $\wt{T}_1$, $\wt{T}_2$ and $\wt{S}$ are as in~\eqref{wt T1, wt T2 y wt T3 caso a} or~\eqref{wt T1, wt T2 y wt T3 caso b}. In many cases the Abelian group structure of $\Ho^2_{\blackdiamond,\Yleft}(H,I)$ follows easily from the proof of Theorem~\ref{prop 4.10'}. Concretely:

\begin{itemize}

\item[-] If $p = 2$, $\eta=1$ and $r=2$, then, by~\eqref{como p=2 eta=1 r=2}, we have $(b,a)\in\{(0,1),(0,3),(2,1),(2,3)\}$ and
$$
\Ho^2_{\blackdiamond,\Yleft}(H,I) \simeq \begin{cases} \mathbb{Z}_2\oplus \mathbb{Z}_2 &\text{if $a=1$ and $b=0$,}\\ \mathbb{Z}_2 &\text{if $a=1$ and $b=2$,}\\ \mathbb{Z}_2\oplus \mathbb{Z}_2 &\text{if $a=3$ and $b=0$,}\\ \mathbb{Z}_4 &\text{if $a=3$ and $b=2$.}\end{cases}
$$

\item[-] If $p$ odd, $\eta<r$, $\eta\le q$ and $q'\le\beta$, then $\eta+q'\ge r$ and by~\eqref{para homologia 1},
$$
\Ho^2_{\blackdiamond,\Yleft}(H,I) \simeq \mathbb{Z}_{p^{\eta-r+q'}} \oplus \mathbb{Z}_{p^{\eta}}.
$$

\item[-] If $p$ odd, $\eta<r$, $q<\eta$ and $q=q'= \beta$, then, by~\eqref{para homologia 2}, we have
$$
\Ho^2_{\blackdiamond,\Yleft}(H,I) \simeq \mathbb{Z}_{p^{\beta}}\oplus \mathbb{Z}_{p^{\beta+\eta-r}}.
$$

\item[-] If $p$ odd, $\eta<r$, $q<\eta$ and $q=q'<\beta$, then $a\ne 1$, $r-q= \eta-u$ and by~\eqref{para homologia 3}, we have
$$
\Ho^2_{\blackdiamond,\Yleft}(H,I) \simeq \mathbb{Z}_{p^{r-\eta+u}}\oplus \mathbb{Z}_{p^u}.
$$
\end{itemize}
The group structure of $\Ho^2_{\blackdiamond,\Yleft}(H,I)$ was not determined in the proof of Theorem~\ref{prop 4.10'} in the following cases:

\begin{itemize}

\item[-] $p$ odd, $\eta<r$, $\eta\le q$ and $q'>\beta$,

\item[-] $p$ odd, $\eta<r$, $q<\eta$ and $q'=\beta<q$,

\item[-] $p$ odd, $\eta<r$, $q<\eta$, $q=\beta<q'$ and $q'\le r-\eta+\beta$,

\item[-] $p$ odd, $\eta<r$, $q<\eta$, $q=\beta<q'$ and $r-\eta+\beta<q'$.

\end{itemize}
\end{remark}

Next, we determine the group structure of $\Ho^2_{\blackdiamond,\Yleft}(H,I)$ in the cases that had not been resolved previously. In the proof, we make use of the $p$-adic valuation of elements of $\mathbb{Z}_{p^r}$, defined by
$$
v_p(x)\coloneqq \begin{cases} \text{the largest integer $k$ such that $p^k \mid x$} & \text{if $x \neq 0$,}\\ r & \text{if $x = 0$.} \end{cases}
$$

\begin{theorem} The following facts hold:

\begin{itemize}

\item[-] If $p$ odd, $\eta<r$, $\eta\le q$ and $q'>\beta$, then
$$
\Ho^2_{\blackdiamond,\Yleft}(H,I)\simeq \mathbb{Z}_{p^{\eta}} \oplus \mathbb{Z}_{p^{\eta+q'-r}},
$$

\item[-] If $p$ odd, $\eta<r$, $q<\eta$ and $q'=\beta<q$, then
$$
\Ho^2_{\blackdiamond,\Yleft}(H,I) \simeq \mathbb{Z}_{p^{\max(\beta,q-r+\eta)}} \oplus \mathbb{Z}_{p^{\beta+q-r+\eta-\max(\beta,q-r+\eta)}},
$$

\item[-] If $p$ odd, $\eta<r$, $q<\eta$, $q=\beta<q'$ and $q'\le r-\eta+\beta$, then
$$
\Ho^2_{\blackdiamond,\Yleft}(H,I) \simeq \begin{cases} \mathbb{Z}_{p^{\max(q'+\eta-r,q'-v_p(1-\kappa k'))}} \oplus \mathbb{Z}_{p^{q'+\eta-r+q-\max(q'+\eta-r,q'-v_p(1-\kappa k'))}} & \text{if $v_p(1-\kappa k')<q'-\beta$,}\\\mathbb{Z}_{p^{\beta}}\oplus \mathbb{Z}_{p^{q'+\eta-r}} & \text{if $v_p(1-\kappa k')\ge q'-\beta$,}\end{cases}
$$

\item[-] If $p$ odd, $\eta<r$, $q<\eta$, $q=\beta<q'$ and $r-\eta+\beta<q'$, then
$$
\Ho^2_{\blackdiamond,\Yleft}(H,I) \simeq \begin{cases} \mathbb{Z}_{p^{r-\eta+\beta-v_p(1-\kappa k')}}\oplus \mathbb{Z}_{p^{\beta-r+\eta+v_p(1-\kappa k')}} & \text{if $v_p(1-\kappa k')<r-\eta$,}\\ \mathbb{Z}_{p^{\beta}}\oplus \mathbb{Z}_{p^{\beta}} & \text{if $v_p(1-\kappa k')\ge r-\eta$.}\end{cases}
$$

\end{itemize}

\end{theorem}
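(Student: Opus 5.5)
In each of the four cases I would start from the description already obtained in the proof of Theorem~\ref{prop 4.10'}. Since $\ov{\Phi}$ is an isomorphism, it suffices to compute the abstract group $K/\ima\wt{S}$, where $K=\ker\wt{T}_1\cap\ker\wt{T}_2$. In every case $K$ has been given an explicit parametrization of the form $(x,y)=\Psi(x',y')$ by a pair of free parameters, namely \eqref{pepito5'}, \eqref{para despues1}, \eqref{pepe1} or \eqref{pepe2}. Each such $\Psi$ is a homomorphism from a group $\mathbb{Z}_{p^{m_1}}\oplus\mathbb{Z}_{p^{m_2}}$ onto $K$, and it is injective by counting $|K|$. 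Hence $K\simeq\mathbb{Z}_{p^{m_1}}\oplus\mathbb{Z}_{p^{m_2}}$.

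Next I would pull back $\ima\wt{S}$ along $\Psi$. Since $\ima\wt{S}$ is cyclic, generated by $(p^\eta,0)$ in case b) and by $(0,p^qk'')$ in case a), its preimage is generated by a single element $g=(g_1,g_2)$. I would compute $g$ explicitly.
\begin{itemize}
\item In the first case, solving $p^{r-q'}x'=p^\eta$, $y=0$ gives $x'=p^{\eta-r+q'}$ and $y'=p^uk'$, as in item~2) of the proof.
\item In the case $q'=\beta<q$, the element $(0,p^q)$ corresponds to $z_2=p^{q-r+\eta}$ together with $z_1=$ the element correcting $c\,p^{r-\eta}z_2$. This correction has valuation $q-r+\eta+\eta-q+r-\eta-(r-\beta)$ measured in $z_1$-units.
\item In the two $q=\beta<q'$ cases, the generator involves $x'$ or $t$ together with the unit $k'$ and $\kappa$. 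This is where $1-\kappa k'$ appears: combining $y=k't+\dots$ with $cy=p^\eta t\kappa$-type relations and $c=p^{\eta-q}\kappa$ produces a coordinate proportional to $(1-\kappa k')$.
\end{itemize}

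The quotient $(\mathbb{Z}_{p^{m_1}}\oplus\mathbb{Z}_{p^{m_2}})/\langle g\rangle$ is then determined by the elementary lemma below, which I would state and prove first. For $g=(p^{a_1}e_1,p^{a_2}e_2)$ with units $e_i$ and $a_i\le m_i$, the quotient has order $p^{m_1+m_2-\operatorname{ord}(g)}$, where $\operatorname{ord}(g)=p^{\max(m_1-a_1,m_2-a_2)}$. Its exponent is $p^{\max(m_1,m_2,\ldots)}$, computed as follows: if, say, $m_1-a_1\ge m_2-a_2$, a change of basis (Smith normal form over $\mathbb{Z}_{(p)}$) makes $g$ equivalent to $(p^{a_1},0)$ or $(0,p^{a_2})$. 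The quotient is then $\mathbb{Z}_{p^{\max(a_1+m_2-\ldots)}}\oplus\ldots$. Concretely, the larger invariant factor equals $\max$ of the orders of the images of the two basis vectors, and the smaller one is obtained from the order formula. This yields exactly the $\max(\cdot,\cdot)$ shapes appearing in the statement. Applying the lemma case by case and simplifying with the relations $\beta=r-\eta+u$ and $q=\beta$ (or $q'=\beta$) from \eqref{tres casos}, and $r<2\eta$, gives the stated formulas. In the last two cases one splits according to whether the $(1-\kappa k')$ coordinate of $g$ vanishes modulo the relevant modulus, namely $v_p(1-\kappa k')\ge q'-\beta$ or $\ge r-\eta$. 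When it vanishes, $g$ is a pure multiple of a basis vector and the quotient is the split group $\mathbb{Z}_{p^\beta}\oplus\ldots$.

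The main obstacle is the third and fourth cases. There I must carefully identify the preimage generator $g$, tracking $c=p^{\eta-q}\kappa$ with $\kappa k''\equiv1\pmod{p^{r-\eta}}$ and the relation $p^qk''=b+a-1$ (where $k''$ differs from $k'$). The quantity that should emerge is $1-\kappa k'$ rather than $1-\kappa k''$, and I would verify this first by writing $k''$ in terms of $k'$ using $q=\beta$ before applying the lemma.
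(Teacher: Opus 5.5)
Your framework is sound: pull back the cyclic group $\ima\wt{S}$ and read off the invariant factors from the total order and the orders of the images of two generators. It is essentially the paper's exponent-plus-order argument. But your decisive step, ``applying the lemma case by case \dots\ gives the stated formulas'', is asserted rather than carried out. For the first two items it cannot be carried out: those formulas are false, and your own method yields different groups.

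\textbf{First item.} Your generator $g=(p^{\eta-r+q'},p^{u}k')\in\mathbb{Z}_{p^{q'}}\oplus\mathbb{Z}_{p^{\eta}}$ is correct. The images of $(1,0)$ and $(0,1)$ in the quotient have orders $p^{q'+\eta-\beta}$ and $p^{\eta}$. Hence $\Ho^2_{\blackdiamond,\Yleft}(H,I)\simeq\mathbb{Z}_{p^{q'+\eta-\beta}}\oplus\mathbb{Z}_{p^{\beta+\eta-r}}$, whose exponent exceeds $p^{\eta}$ because $q'>\beta$. Directly: $(p^{r-q'},-p^{r-q'+\beta-\eta}k')\in K$ and $\ima\wt{S}=p^{\eta}\mathbb{Z}_{p^r}\oplus 0$, so its class has order at least the additive order $p^{q'+\eta-\beta}$ of its second coordinate. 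For instance, $p=3$, $\eta=1$, $r=2$, $a=4$, $b=3$ satisfies the hypotheses, with $q=\beta=1$ and $q'=2$. By \eqref{T_1, T_2 y S}, $T_1=T_2=3(\mathfrak{f}_0-\gamma)$ and $S(t)=(3t,6t)$. The class of $(1,1)$ has order $9$ modulo $\ima S$, so $\Ho^2_{\blackdiamond,\Yleft}(H,I)$ is cyclic of order $9$, not $\mathbb{Z}_3\oplus\mathbb{Z}_3$.

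\textbf{Second item.} Take your $g=(p^{\eta-r+\beta}\kappa,p^{q-r+\eta})\in\mathbb{Z}_{p^{\beta}}\oplus\mathbb{Z}_{p^{\eta}}$. The image of $(0,1)$, i.e.\ the class of $(-p^{r-q}\kappa,p^{r-\eta})$, has order exactly $p^{q}$ modulo $\ima\wt{S}=0\oplus p^{q}\mathbb{Z}_{p^r}$. Since $q>\max(\beta,q-r+\eta)$, the quotient is $\mathbb{Z}_{p^{q}}\oplus\mathbb{Z}_{p^{\beta+\eta-r}}$. For example, $p=3$, $\eta=5$, $r=6$, $a=28$, $b=54$ gives $\mathbb{Z}_{81}\oplus\mathbb{Z}_{9}$, not $\mathbb{Z}_{27}\oplus\mathbb{Z}_{27}$.

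The paper's proof breaks at the same two points. In the first case it discards the term $p^{j+r-q'+\beta-\eta}k'x'$, whose valuation is only guaranteed to be $\ge\beta$, not $\ge r$. In the second it rewrites $p^{r-\beta}z_1-p^{r-q}\kappa z_2$ as $p^{r-\beta}(z_1-p^{q-\beta}\kappa z_2)$, which is wrong since $q>\beta$.

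\textbf{Third and fourth items.} Independently, your claim that every parametrization is an isomorphism $\mathbb{Z}_{p^{m_1}}\oplus\mathbb{Z}_{p^{m_2}}\to K$ fails for \eqref{pepe1} and \eqref{pepe2}. There $\beta=q<\eta$. Replacing $x'$ by $x'+p^{q'}$ (resp.\ $t$ by $t+p^{r+\beta-\eta}$) changes $(x,y)$ by $(0,p^{r+\beta-\eta}k')\neq 0$. In fact $K\simeq\mathbb{Z}_{p^{\beta}}\oplus\mathbb{Z}_{p^{q'+\eta-\beta}}$, resp.\ $K\simeq\mathbb{Z}_{p^{\beta}}\oplus\mathbb{Z}_{p^{r}}$; in \eqref{pepe2} the element with $t=1$, $t'=0$ has $y=k'$ of order $p^r$.

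This is repairable. The discrepancy lies in $\ima\wt{S}=0\oplus p^{\beta}\mathbb{Z}_{p^r}$, so your map induces a well-defined surjection $\mathbb{Z}_{p^{m_1}}\oplus\mathbb{Z}_{p^{m_2}}\to K/\ima\wt{S}$. Its kernel has order $p^{r-\beta}$. The kernel therefore equals $\langle g\rangle$, since the preimage $g$ of $(0,p^{q}k'')$ has order $p^{r-\beta}$. Done this way, your method yields $\mathbb{Z}_{p^{q'}}\oplus\mathbb{Z}_{p^{\beta+\eta-r}}$ and $\mathbb{Z}_{p^{r-\eta+\beta}}\oplus\mathbb{Z}_{p^{\beta+\eta-r}}$, in agreement with the third and fourth items.

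Only the first branch of those two items ever occurs. From $k''\equiv k'+s\pmod{p^{r-\beta}}$ and $\kappa k''\equiv 1\pmod{p^{r-\eta}}$ you get $1-\kappa k'\equiv\kappa s\pmod{p^{r-\eta}}$. This is a unit, so $v_p(1-\kappa k')=0$.
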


\begin{proof} we consider separately each case. Throughout we will use the notations and results obtained in the proof of Theorem~\ref{prop 4.10'}. Let $(x,y)\in K$ arbitrary and let $\overline{(x,y)}$ be the class of $(x,y)$ in $K/\ima \wt{S}$. In all the cases under consideration, we obtain conditions ensuring that $\overline{(x,y)}$ attains the maximum order, and we compute $|\overline{(x,y)}|$.

\smallskip

\noindent\underline{\textsc{First case:} $p$ odd, $\eta<r$, $\eta\le q$ and $q'>\beta$}\enspace Let $x'$ and $y'$ be as in~\eqref{para despues0}, so that
$$
x = p^{r-q'}x'\quad\text{and}\quad y = -p^{r-q'+\beta-\eta}k'x' + p^{r-\eta} y'.
$$
Let $j\in \mathbb{N}$. By the last equality in~\eqref{wt(S), wt{T}_1 y wt{T}_2}, it is clear that $p^j\overline{(x,y)}=0$ if and only if
$$
p^{\eta}\mid p^jx \quad\text{and}\quad p^r\mid p^j y.
$$
The first condition is satisfied if and only if $j\ge \eta - r + q' - v_p(x')$. Since $\beta\ge \eta$, if this is the case, then the second condition is satisfied if and only if $p^jp^{r-\eta} y'=0$, which happens if and only if $j\ge \eta-v_p(y')$. Hence, the order of $\overline{(x,y)}$ is $\max\bigl(p^{\eta - r + q' - v_p(x')}, p^{\eta-v_p(y')}\bigr)$. From this it follows immediately that if $v_p(x')=v_p(y')=0$, then $\overline{(x,y)}$ attains the maximum order, and $|\overline{(x,y)}|=p^{\eta}$. Since $\left|\frac{K}{\ima\wt{S}}\right| = p^{q'+2\eta-r} > p^\eta$ (because $\eta+q'>\eta+\beta\ge r$), this implies that~$\Ho^2_{\blackdiamond,\Yleft}(H,I)$ is not a cycle group. Since $K$ is generated by two elements,
$$
\Ho^2_{\blackdiamond,\Yleft}(H,I) \simeq \frac{K}{\ima\wt{S}}\simeq \mathbb{Z}_{p^{\eta}} \oplus \mathbb{Z}_{p^{\eta+q'-r}}.
$$

\smallskip

\noindent\underline{\textsc{Second case:} $p$ odd, $\eta<r$, $q<\eta$ and $q'=\beta<q$}\enspace Let $z_1$ and $z_2$ be as in~\eqref{para despues1} and let $c$ be as in~\eqref{para despues3}, so that
$$
y=p^{r-\eta}z_2\quad\text{and}\quad x=p^{r-\beta}z_1 - cp^{r-\eta}z_2=p^{r-\beta}z_1 - p^{r-q}\kappa z_2 = p^{r-\beta}(z_1-p^{q-\beta}\kappa z_2).
$$
Let $j\in \mathbb{N}$. By the last equality in~\eqref{eq3'}, it is clear that $p^j\overline{(x,y)}=0$ if and only if
$$
p^q \mid p^jy\quad\text{and}\quad p^r \mid p^jx.
$$
The first condition holds precisely when
$$
j\ge q-r+\eta-v_p(z_2),
$$
while the second holds precisely when
$$
j\ge \beta - v_p(z_1 - p^{q-\beta}z_2).
$$
Consequently, if $v_p(z_1)=v_p(z_2)=0$, then $\overline{(x,y)}$ attains the maximum order, which is $|\overline{(x,y)}| =\max\bigl(p^{\beta},p^{q-r+\eta}\bigr)$. Since
$$
\left|\frac{K}{\ima\wt{S}}\right| = p^{\beta+q-r+\eta} >\max\bigl(p^{\beta},p^{q-r+\eta}\bigr)
$$
(because $\beta>0$ and $q-r+\eta > \beta-r+\eta\ge 0$), this implies that~$\Ho^2_{\blackdiamond,\Yleft}(H,I)$ is not a cycle group. Therefore
$$
\Ho^2_{\blackdiamond,\Yleft}(H,I) \simeq \frac{K}{\ima\wt{S}}\simeq \mathbb{Z}_{p^{\max(\beta,q-r+\eta)}} \oplus \mathbb{Z}_{p^{\beta+q-r+\eta-\max(\beta,q-r+\eta)}},
$$
because $K$ is generated by two elements.

\smallskip

\noindent\underline{\textsc{Third case:} $p$ odd, $\eta<r$, $q<\eta$, $q=\beta<q'$ and $q'\le r-\eta+\beta$}\enspace Let $x'$ and $y'$ be as in~\eqref{para despues2} and let $c$ be as in~\eqref{para despues3}, so that
$$
y = p^{r+\beta-q'-\eta}k'x' + p^{r-\eta}y' = p^{r+\beta-q'-\eta}(k'x' + p^{q'-\beta}y')\quad \text{and}\quad x = p^{r-q'}x'-cy.
$$
Since $c y = p^{r-q'}\kappa k'x' + p^{r-\beta}\kappa y'$, we have
$$
x = p^{r-q'}(1-\kappa k') x'- p^{r-\beta}\kappa y'= p^{r-q'}\bigl((1-\kappa k') x'- p^{q'-\beta}\kappa y'\bigr).
$$
Let $j\in \mathbb{N}$. By the last equality in~\eqref{eq3'}, we know that $p^j\overline{(x,y)}=0$ if and only if
$$
p^q \mid p^jy\quad\text{and}\quad p^r \mid p^jx.
$$
Write $1-\kappa k' = p^{v_p(1-\kappa k')}k^{\mathrm{iv}}$. We consider two cases. Namely $v_p(1-\kappa k')<q'-\beta$ and $v_p(1-\kappa k')\ge q'-\beta$.

\begin{description}[font=\normalfont\scshape, leftmargin=0cm]

\item[\underline{\textsc{Case a}: $v_p(1-\kappa k')<q'-\beta$}] Then
$$
y = p^{r+\beta-q'-\eta}(k'x' + p^{q'-\beta}y')\quad \text{and}\quad  x = p^{r-q'+v_p(1-\kappa k')}\bigl(k^{\mathrm{iv}} x'- p^{q'-\beta - v_p(1-\kappa k')}\kappa y'\bigr).
$$
From this it follows that if $v_p(x') = 0$, then $\overline{(x,y)}$ has maximum order. Suppose then, that $v_p(x') = 0$. A direct~com\-pu\-tation using that $q=\beta$ shows that
$$
p^q\mid p^j y \Leftrightarrow j\ge q'+\eta-r \qquad\text{and}\qquad p^r\mid p^j x \Leftrightarrow j \ge q'-v_p(1-\kappa k').
$$
Therefore $|\overline{(x,y)}| = \max\bigl(p^{q'+\eta-r},p^{q'-v_p(1-\kappa k')}\bigr)$. Since
$$
\left|\frac{K}{\ima\wt{S}}\right| = p^{q'+\eta-r+q} > \max\bigl(p^{q'+\eta-r},p^{q'-v_p(1-\kappa k')}\bigr)
$$
(because $q=\beta>0$ and $\eta+q = \eta+\beta>2\beta\ge r$), this implies that~$\Ho^2_{\blackdiamond,\Yleft}(H,I)$ is not a cycle group. Hence
$$
\Ho^2_{\blackdiamond,\Yleft}(H,I) \simeq \frac{K}{\ima\wt{S}}\simeq \mathbb{Z}_{p^{\max(q'+\eta-r,q'-v_p(1-\kappa k'))}} \oplus \mathbb{Z}_{p^{q'+\eta-r+q-\max(q'+\eta-r,q'-v_p(1-\kappa k'))}},
$$
because $K$ is generated by two elements.

\smallskip

\item[\underline{\textsc{Case b}: $v_p(1-\kappa k')\ge q'-\beta$}] Then
$$
y = p^{r+\beta-q'-\eta}(k'x' + p^{q'-\beta}y')\quad \text{and}\quad  x = p^{r-\beta}\bigl(p^{v_p(1-\kappa k')-q'+\beta} k^{\mathrm{iv}} x'- \kappa y'\bigr).
$$
Note that $p^r\mid p^{\beta}x$ and that $p^q\mid p^{\beta}y$ (since $q=\beta\ge\eta-r+q'$). Hence $|\overline{(x,y)}|\le p^{\beta}$. But if
$$
v_p(1-\kappa k')> q'-\beta\quad\text{and}\quad v_p(y') = 0,
$$
then $|\overline{(x,y)}| = p^{\beta}$; and the same happens when $v_p(1-\kappa k')= q'-\beta$ and $k^{\mathrm{iv}} x'- \kappa y'$. Since
$$
\left|\frac{K}{\ima\wt{S}}\right| = p^{q'+\eta-r+q} > p^{\beta}
$$
(because $\eta-r+q = \eta-r+\beta\ge 0$ and $q'>\beta$), the group $\Ho^2_{\blackdiamond,\Yleft}(H,I)$ is not cycle. Since $K$ is generated by two elements,
$$
\Ho^2_{\blackdiamond,\Yleft}(H,I) \simeq \frac{K}{\ima\wt{S}}\simeq \mathbb{Z}_{p^{\beta}}\oplus \mathbb{Z}_{p^{q'+\eta-r}}.
$$
\end{description}

\noindent\underline{\textsc{fourth case:} $p$ odd, $\eta<r$, $q<\eta$, $q=\beta<q'$ and $q'> r-\eta+\beta$}\enspace Let $t$ and $t'$ be as in~\eqref{para despues4} and let $c$ be as in~\eqref{para despues3}, so that
$$
y = k't + p^{r-\eta} t'\quad \text{and}\quad x = p^{\eta-\beta}t-cy = p^{\eta-\beta}(1-\kappa k')t - p^{r-\beta}\kappa t' = p^{\eta-\beta}\bigl((1-\kappa k')t - p^{r-\eta}\kappa t'\bigr).
$$
Let $j\in \mathbb{N}$. By the last equality in~\eqref{eq3'}, we know that $p^j\overline{(x,y)}=0$ if and only if
$$
p^q \mid p^jy\quad\text{and}\quad p^r \mid p^jx.
$$
Write $1-\kappa k' = p^{v_p(1-\kappa k')}k^{\mathrm{iv}}$. We consider two cases. Namely $v_p(1-\kappa k')<r-\eta$ and $v_p(1-\kappa k')\ge r-\beta$.

\begin{description}[font=\normalfont\scshape, leftmargin=0cm]

\item[\underline{\textsc{Case a}: $v_p(1-\kappa k')<r-\eta$}] Then
$$
y = k't + p^{r-\eta} t' \quad \text{and}\quad  x = p^{\eta-\beta+v_p(1-\kappa k')}\bigl(k^{\mathrm{iv}} t - p^{r-\eta - v_p(1-\kappa k')}\kappa t'\bigr).
$$
From this it follows that if $v_p(t) = 0$, then $\overline{(x,y)}$ has maximum order. Suppose then, that $v_p(t) = 0$. A direct~com\-pu\-tation using that $q=\beta$ shows that
$$
p^q\mid p^j y \Leftrightarrow j\ge q \qquad\text{and}\qquad p^r\mid p^j x \Leftrightarrow j \ge r-\eta+\beta-v_p(1-\kappa k').
$$
Since $q=\beta\le r-\eta+\beta-v_p(1-\kappa k')$, we have $|\overline{(x,y)}| = p^{r-\eta+\beta-v_p(1-\kappa k')}$. Since
$$
\left|\frac{K}{\ima\wt{S}}\right| = p^{2\beta} > p^{r-\eta+\beta-v_p(1-\kappa k')}
$$
(because $\beta+\eta>2\beta\ge r$), the group $\Ho^2_{\blackdiamond,\Yleft}(H,I)$ is not cycle. Since $K$ is generated by two elements,
$$
\Ho^2_{\blackdiamond,\Yleft}(H,I) \simeq \frac{K}{\ima\wt{S}}\simeq \mathbb{Z}_{p^{r-\eta+\beta-v_p(1-\kappa k')}}\oplus \mathbb{Z}_{p^{\beta-r+\eta+v_p(1-\kappa k')}}.
$$

\item[\underline{\textsc{Case b}: $v_p(1-\kappa k')\ge r-\eta$}] Then
$$
y = k't + p^{r-\eta} t' \quad \text{and}\quad  x = p^{r-\beta}\bigl(p^{v_p(1-\kappa k')-r+\eta} k^{\mathrm{iv}} t - \kappa t'\bigr).
$$
Since $p^q\mid p^{\beta}y$ and $p^r\mid p^{\beta}x$, we know that $|\overline{(x,y)}|\le p^{\beta}$. But if $v_p(1-\kappa k')>r-\eta$ and $v_p(t') = 0$, then $|\overline{(x,y)}| = p^{\beta}$; and the same happens when $v_p(1-\kappa k')=r-\eta$ and $v_p(k^{\mathrm{iv}} t - \kappa t') = 0$. Since
$$
\left|\frac{K}{\ima\wt{S}}\right| = p^{2\beta} > p^{\beta},
$$
the group $\Ho^2_{\blackdiamond,\Yleft}(H,I)$ is not cycle. Since $K$ is generated by two elements,
$$
\Ho^2_{\blackdiamond,\Yleft}(H,I) \simeq \frac{K}{\ima\wt{S}}\simeq \mathbb{Z}_{p^{\beta}}\oplus \mathbb{Z}_{p^{\beta}}.
$$
\end{description}
This finishes the proof of the theorem.
\end{proof}

\subsubsection[The underlying additive group of the linear cycle set \texorpdfstring{$E^{\blackdiamond,\Yleft}_{\gamma,\mathfrak{f}_0}$}{E}]{The underlying additive group of the linear cycle set \texorpdfstring{$\pmb{E^{\blackdiamond,\Yleft}_{\gamma,\mathfrak{f}_0}}$}{E}}

Recall thah $U\bigl(E^{\blackdiamond,\Yleft}_{\gamma,\mathfrak{f}_0}\bigr)$ denotes the underlying additive group of the linear cycle set $E^{\blackdiamond,\Yleft}_{\gamma,\mathfrak{f}_0}$.

\begin{remark} Assume that we are under the assumptions of Theorem~\ref{prop 4.10'} and that $p$ is even. A direct computation shows that
$$
U\bigl(E^{\blackdiamond,\Yleft}_{\gamma,\mathfrak{f}_0}\bigr)\simeq \begin{cases} \mathbb{Z}_4\oplus \mathbb{Z}_2 & \text{if $\gamma\in\{0,2\}$,}\\ \mathbb{Z}_8 & \text{if $\gamma\in\{1,3\}$.}\end{cases}
$$
\end{remark}

\begin{remark} Assume that we are under the assumptions of Theorem~\ref{prop 4.10'} and that $p$ is odd and $\eta\le q$. Write
$$
u_1\coloneqq \begin{cases} \text{the largest integer $k$ such that $p^k \mid z_1$} &\text{if $z_1\ne 0$,}\\ \eta+q'-r &\text{if $z_1 = 0$.}\end{cases}
$$
The exponent of $U\bigl(E^{\blackdiamond,\Yleft}_{\gamma,\mathfrak{f}_0}\bigr)$ is $p^{\eta+q'-u_1}$. Indeed if $z_1\ne 0$, then $p^{\eta+q'-u_1}$ is the order of $w_1$; while if $z_1 = 0$, then $p^{\eta+q'-u_1} = p^r$ is the orden of $1$. In both cases, for all $y\in I$ and $h\in H$, we have
$$
p^{\eta+q'-u_1}(y+w_h) = p^{\eta+q'-u_1}y + p^{\eta+q'- u_1} w_h = 0.
$$
Hence,
$$
U\bigl(E^{\blackdiamond,\Yleft}_{\gamma,\mathfrak{f}_0}\bigr)\simeq \mathbb{Z}_{p^{\eta+q'-u_1}}\oplus \mathbb{Z}_{p^{r-q'+u_1}}.
$$
\end{remark}

\begin{remark} Assume that we are under the assumptions of Theorem~\ref{prop 4.10'} and that $p$ is odd and $q<\eta$. Assume also that either $q=q'\le \beta$, or $q'=\beta<q$, or $q=\beta<q'\le r-\eta+\beta$; and write
$$
u_1\coloneqq \begin{cases} \text{the largest integer $k$ such that $p^k \mid z_1$} &\text{if $z_1\ne 0$,}\\ q' &\text{if $z_1 = 0$.}\end{cases}
$$
A direct computation shows that $p^{\eta+q'-u_1}$ is the order of $w_1\in U\bigl(E^{\blackdiamond,\Yleft}_{\gamma,\mathfrak{f}_0}\bigr)$ and clearly $p^r$ is the order of $1\in U\bigr(E^{\blackdiamond,\Yleft}_{\gamma,\mathfrak{f}_0}\bigl)$. Since also
$$
p^{\max(r,\eta+q'-u_1)}(y+w_h) = p^{\max(r,\eta+q'-u_1)}y + p^{\max(r,\eta+q'-u_1)} w_h = 0\quad\text{for all $y\in I$ and $h\in H$,}
$$
the exponent of $U\bigr(E^{\blackdiamond,\Yleft}_{\gamma,\mathfrak{f}_0}\bigr)$ is $p^{\max(r,\eta+q'-u_1)}$. Hence,
$$
U\bigr(E^{\blackdiamond,\Yleft}_{\gamma,\mathfrak{f}_0}\bigr)\simeq \begin{cases} \mathbb{Z}_{p^{\eta+q'-u_1}}\oplus \mathbb{Z}_{p^{r-q'+u_1}} & \text{if $\eta+q'-u_1\ge r$,}\\ \mathbb{Z}_{p^r}\oplus \mathbb{Z}_{p^{\eta}} & \text{if $\eta+q'-u_1<r$.}\end{cases}
$$
\end{remark}

\begin{remark} Assume that we are under the assumptions of Theorem~\ref{prop 4.10'} and that $p$ is odd and $q<\eta$. Assume also that $q=\beta< r-\eta+\beta<q'$, and write
$$
u_1\coloneqq \begin{cases} \text{the largest integer $k \in \mathbb{N}_0$ such that $p^k \mid z_1$} &\text{if $z_1\ne 0$,}\\ r+\beta-\eta &\text{if $z_1 = 0$.}\end{cases}
$$
A direct computation shows that $p^{r+\beta-u_1}$ is the order of $w_1\in U\bigl(E^{\blackdiamond,\Yleft}_{\gamma,\mathfrak{f}_0}\bigr)$ and clearly that $p^r$ is the order of $1\in U\bigr(E^{\blackdiamond,\Yleft}_{\gamma,\mathfrak{f}_0}\bigl)$. Since also
$$
p^{\max(r,r+\beta-u_1)}(y+w_h) = p^{\max(r,r+\beta-u_1)}y + p^{\max(r,r+\beta-u_1)} w_h = 0\quad\text{for all $y\in I$ and $h\in H$,}
$$
the exponent of $U\bigr(E^{\blackdiamond,\Yleft}_{\gamma,\mathfrak{f}_0}\bigr)$ is $p^{\max(r,r+\beta-u_1)}$. Hence,
$$
U\bigl(E^{\blackdiamond,\Yleft}_{\gamma,\mathfrak{f}_0}\bigr)\simeq \begin{cases} \mathbb{Z}_{p^{r+\beta-u_1}}\oplus \mathbb{Z}_{p^{\eta-\beta+u_1}} & \text{if $r+\beta-u_1\ge r$,}\\ \mathbb{Z}_{p^r}\oplus \mathbb{Z}_{p^{\eta}} & \text{if $r+\beta-u_1<r$.}\end{cases}
$$
Note finally that if $z_1\ne 0$, then $u_1<\beta$. Thus, $r+\beta-u_1<r$ if and only if $z_1=0$.
\end{remark}

\subsubsection[The socle and the center]{The socle and the center}

In this subsection, we work under the hypothesis of Theorem~\ref{prop 4.10'}. Our objective is to compute the socle and the center of $E^{\blackdiamond,\Yleft}_{\gamma,\mathfrak{f}_0}$, across the different cases that arise in that theorem.

\paragraph{Case $\pmb{p}$ odd}

\begin{proposition}\label{socalo caso 1 del segundo teorema} Assume that $p$ is odd. For each pair of parameters $(\gamma,\mathfrak{f}_0)$ as specified in Theorem~\ref{prop 4.10'}, we have
$$
\soc\bigl(E^{\blackdiamond,\Yleft}_{\gamma,\mathfrak{f}_0}\bigr) = \bigl\{y+w_h\in E^{\blackdiamond,\Yleft}_{\gamma,\mathfrak{f}_0}:p^{\eta-u}\mid h \text{ and } h\mathfrak{f}_0=-by\bigr\},
$$
where $u$ is at the beginning of Subsection~\ref{subseccion 2.2}, $0\le h<p^{\eta}$ and the equality $h\mathfrak{f}_0=-by$ is performed in $\mathbb{Z}_{p^r}$.
\end{proposition}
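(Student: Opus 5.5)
I will follow the proof of Proposition~\ref{socalo caso 2 del primer teorema}, adapting the valuation estimates to the new normalization $a=1+p^{r-\eta}k$, $k=p^us$. First, by~\eqref{formula para cdot en I times H'} and~\eqref{pep2pr},
$$
(y+w_h)\cdot(z+w_l)=a^hz+f_{\mathfrak{f}_0}(h,l)+ba^hyl+w_l.
$$
Taking $l=0$ and $z=1$ shows that every element of the socle satisfies $a^h=1$. Hence $y+w_h\in\soc$ if and only if $a^h=1$ and $f_{\mathfrak{f}_0}(h,l)=-byl$ for all $l$. Since $a-1=p^{r-\eta+u}s$ with $p\nmid s$ and $p$ odd, the lifting exponent lemma (or Lemma~\ref{para ejemplos}) gives $a^h=1$ if and only if $p^{\eta-u}\mid h$. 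When $a=1$ we have $u=\eta$, and the condition is empty, as it should be. Because $f_{\mathfrak{f}_0}(h,l)$ is $l$ times $f_{\mathfrak{f}_0}(h,1)$ by~\eqref{cociclo1 ej 4'}, the second condition reduces to the single equation $f_{\mathfrak{f}_0}(h,1)=-by$.

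Next I will simplify $f_{\mathfrak{f}_0}(h,1)$ under the assumption $p^{\eta-u}\mid h$. By~\eqref{cociclo1 ej 4'},
$$
f_{\mathfrak{f}_0}(h,1)=\sum_{\ell\ge0}\binom{h}{\ell+1}p^{(r-\eta+u)\ell}s^\ell\mathfrak{f}_0+p^\beta k'\sum_{\ell\ge0}\Bigl((h-1)\binom{h}{\ell+1}-\binom{h}{\ell+2}\Bigr)p^{(r-\eta+u)\ell}s^\ell\mathfrak{f}_0.
$$
The key claim is that $\binom{h}{\ell+1}p^{(r-\eta+u)\ell}\equiv0\pmod{p^r}$ for every $\ell\ge1$. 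To prove it, write $\ell+1=p^tv$ with $p\nmid v$. By Remark~\ref{valuacion de p en combinatorio}, $p^{\max(0,\eta-u-t)}$ divides $\binom{h}{\ell+1}$, so the $p$-valuation of the term is at least $\max(0,\eta-u-t)+(r-\eta+u)\ell$. If $t<\eta-u$, this is at least $r-t+(r-\eta+u)(\ell-1)+u\ell-u\ge r$, using $\ell-1\ge t$ and $r-\eta\ge1$. If $t\ge\eta-u$, then $\ell\ge t+1\ge\eta-u+1$, so the valuation is at least $(r-\eta+u)(\eta-u+1)\ge r$. I expect this elementary but fiddly inequality to be the main obstacle; the case $u=\eta$ (that is, $a=1$) needs a separate, trivial check.

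With the claim, the $\ell\ge1$ terms of the first sum vanish. In the second sum, the terms $(h-1)\binom{h}{\ell+1}$ with $\ell\ge1$ vanish for the same reason. The terms $p^\beta k'\binom{h}{\ell+2}p^{(r-\eta+u)\ell}$ with $\ell\ge1$ also vanish: either rewrite them as $p^{\beta-(r-\eta+u)}\binom{h}{\ell+2}p^{(r-\eta+u)(\ell+1)}$ when $\beta\ge r-\eta+u$, or use $\beta+(r-\eta+u)\ge2\beta\ge r$ otherwise. This leaves
$$
f_{\mathfrak{f}_0}(h,1)=h\mathfrak{f}_0+p^\beta k'\Bigl((h-1)h-\binom{h}{2}\Bigr)\mathfrak{f}_0=h\mathfrak{f}_0+b\binom{h}{2}\mathfrak{f}_0.
$$
Finally, suppose $h\mathfrak{f}_0+b\binom{h}{2}\mathfrak{f}_0=-by$. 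Then $p^\beta\mid h\mathfrak{f}_0$, so $p^\beta\mid\binom{h}{2}\mathfrak{f}_0$ because $p$ is odd and $2$ is invertible. Hence $b\binom{h}{2}\mathfrak{f}_0$ is divisible by $p^{2\beta}$, which is $0$ in $\mathbb{Z}_{p^r}$ since $r\le2\beta$. Conversely, if $h\mathfrak{f}_0=-by$, the same argument shows that the extra term vanishes. This yields exactly the stated description of the socle.
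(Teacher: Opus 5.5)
Your proposal is correct and is essentially the paper's argument: the paper's proof just says to mimic the proof of Proposition~\ref{socalo caso 2 del primer teorema}, and you carry this out with the valuation estimates adapted to $a-1=p^{r-\eta+u}s$. The only differences are harmless: you kill the $(h-1)\binom{h}{\ell+1}$ terms ($\ell\ge 1$) directly with your key claim instead of the $p^{2\beta}$ argument, so the leftover term is $+b\binom{h}{2}\mathfrak{f}_0$ rather than $-b\binom{h}{2}\mathfrak{f}_0$, which vanishes anyway; and in the case $t<\eta-u$ the bound $\eta-u-t+(r-\eta+u)\ell$ actually equals $r-t+(r-\eta+u)(\ell-1)$, without your extra $u\ell-u$, but this is still $\ge r-t+(\ell-1)\ge r$, so the claim stands.
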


\begin{proof} Mimic the proof of Proposition~\ref{socalo caso 2 del primer teorema}.
\end{proof}

\begin{remark} Assume that we are under the hypothesis of Proposition~\ref{socalo caso 1 del segundo teorema}. If $\mathfrak{f}_0 = 0$, then
$$
\Soc\bigl(E^{\blackdiamond,\Yleft}_{\gamma,\mathfrak{f}_0}\bigr) = \bigl\{y+w_h\in E^{\blackdiamond,\Yleft}_{\gamma,\mathfrak{f}_0}: h\in p^{\eta-u}\mathbb{Z}_{p^{\eta}} \text{ and } y \in p^{r-\beta}\mathbb{Z}_{p^r}\}.
$$
Consequently $\bigl|\Soc\bigl(E^{\blackdiamond,\Yleft}_{\gamma,\mathfrak{f}_0}\bigr)\bigr| = p^{u+\beta}$. Assume then that $\mathfrak{f}_0\ne 0$ and write
$\mathfrak{f}_0 = p^{u_2}s_2$ with~$p\nmid s_2$. Let $\bar{k}'$ be the inverse of $k'$ in $\mathbb{Z}_{p^r}$. Then
$$
\Soc\bigl(E^{\blackdiamond,\Yleft}_{\gamma,\mathfrak{f}_0}\bigr) = \bigl\{y+w_h\in E^{\blackdiamond,\Yleft}_{\gamma,\mathfrak{f}_0} : h = p^{\xi}\tilde{h}\text{ with } \tilde{h}\in \mathbb{Z}_{p^{\eta}}\text{ and } y\in - p^{\xi+u_2-\beta}\tilde{h}s_2\bar{k}' + p^{r-\beta} \mathbb{Z}_{p^r} \bigr\},
$$
where $\xi\coloneqq \max(\eta-u,\beta-u_2)$. Consequently $\bigl|\Soc\bigl(E^{\blackdiamond,\Yleft}_{\gamma,\mathfrak{f}_0}\bigr)\bigr| = p^{\eta-\xi+\beta}$.
\end{remark}

\begin{proposition}\label{centro caso 1 del segundo teorema} Assume that $p$ is odd. For each pair of parameters $(\gamma,\mathfrak{f}_0)$ as specified in Theorem~\ref{prop 4.10'}, we have
$$
\Z\bigl(E^{\blackdiamond,\Yleft}_{\gamma,\mathfrak{f}_0}\bigr) = \bigl\{y+w_h\in E^{\blackdiamond,\Yleft}_{\gamma,\mathfrak{f}_0}: p^{\eta-u}\mid h,\text{ } h\mathfrak{f}_0=-by \text{ and } (a-1)y = by\bigr\},
$$
where $u$ is at the beginning of Subsection~\ref{subseccion 2.2}, $0\le h<p^{\eta}$ and the equalities are performed in $\mathbb{Z}_{p^r}$.
\end{proposition}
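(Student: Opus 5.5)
We follow the proof of Proposition~\ref{centro caso 2 del primer teorema}, using Proposition~\ref{socalo caso 1 del segundo teorema} in place of Proposition~\ref{socalo caso 2 del primer teorema}. Since $\Z\bigl(E^{\blackdiamond,\Yleft}_{\gamma,\mathfrak{f}_0}\bigr)\subseteq \soc\bigl(E^{\blackdiamond,\Yleft}_{\gamma,\mathfrak{f}_0}\bigr)$, we may start from an element $y+w_h$ with $p^{\eta-u}\mid h$ and $h\mathfrak{f}_0=-by$. By~\eqref{formula para cdot en I times H'} and~\eqref{pep2pr},
$$
(z+w_l)\cdot(y+w_h) = a^ly + f_{\mathfrak{f}_0}(l,h) + ba^lzh + w_h .
$$
So $y+w_h$ is central if and only if this equals $y+w_h$ for all $z\in I$ and $l\in H$. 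Taking $l=0$ forces $bzh=0$ for all $z$, that is, $bh=0$. The remaining condition is $y=a^ly+f_{\mathfrak{f}_0}(l,h)$ for all $l$.

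The main step is to show that $f_{\mathfrak{f}_0}(l,h)=\mathfrak{f}_0lh$ whenever $p^{\eta-u}\mid h$. We use~\eqref{cociclo1 ej 4'} with the roles of the arguments as there, namely first argument $l$ and multiplier $h'=h$. Each term with $\ell\ge1$ carries a factor $p^{(r-\eta)\ell}k^\ell h=p^{(r-\eta+u)\ell}s^\ell h$. Since $p^{\eta-u}\mid h$, this is divisible by $p^{(r-\eta+u)\ell+\eta-u}$, which is divisible by $p^r$ because $\ell\ge1$ and $r-\eta+u\ge r-\eta$.

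The $b$-part also needs care. It contains $p^\beta k'(l-1)\binom{l}{1}\mathfrak{f}_0h$ and $p^{\beta}k'\binom{l}{2}\mathfrak{f}_0h$. Both are multiples of $b\,h\mathfrak{f}_0=-b^2y=0$, since $b^2=0$ by~\eqref{pep1}. Hence $f_{\mathfrak{f}_0}(l,h)=l\,h\mathfrak{f}_0=-bly$.

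When $a=1$ we have $u=\eta$, so there is no divisibility condition on $h$ at all. In that case the same computation is immediate, because the $\ell\ge1$ terms vanish identically.

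With this, centrality becomes $(a^l-1)y=bly$ for all $l$, together with $bh=0$. As in Proposition~\ref{centro caso 2 del primer teorema}, the case $l=1$ gives $(a-1)y=by$. The general case follows from it: writing $a^l-1=(a-1)\bigl(1+\dots+a^{l-1}\bigr)$ and $1+\dots+a^{l-1}-l=(a-1)x$, we get
$$
(a^l-1-bl)y=(a-1)^2xy=b^2xy=0 .
$$
Here $(a-1)^2y=(a-1)by=b^2y$ uses $(a-1)y=by$ twice.

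It remains to check that $bh=0$ is automatic. Set $h=p^{\eta-u}\tilde h$. Then
$$
bh=p^{\beta+\eta-u}k'\tilde h .
$$
The case analysis in Remark~\ref{caso q < eta} and Remark~\ref{caso eta le q} gives $u<\beta$ when $a\ne1$, and $\beta\ge r-\eta$. I expect this verification to be the main obstacle, since it requires checking $\beta+\eta-u\ge r$ in every subcase.

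If $\beta+\eta-u\ge r$ fails, I would instead derive $bhy$-type vanishing as in the earlier proof, using $p^{r-\eta+u}\mid a-1$ and $p^{\eta-u}\mid h$. Note that $bh$ only appears multiplied by arbitrary $z$, so the condition genuinely needed is $bh=0$ in $\mathbb{Z}_{p^r}$. The fallback is to deduce it from $p^{\beta}\mid (a-1)$-type relations when $\beta\ge r-\eta+u$; when $\beta<r-\eta+u$, we are in the subcases listed in~\eqref{tres casos}.
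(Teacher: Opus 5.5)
Everything in your argument up to the last step is correct. It is also more careful than the proof the paper points to. In the proof of Proposition~\ref{centro caso 2 del primer teorema}, which the paper says to mimic here, the $\Yleft$-term of $(z+w_l)\cdot(y+w_h)$ is written as $ba^lyh$ instead of $ba^lzh$. That proof therefore only extracts the condition $byh=0$, and obtains it from $(a-1)y=by$. You correctly get that $l=0$ forces $bh=0$ in $\mathbb{Z}_{p^r}$.

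The gap is your claim that $bh=0$ is automatic: it is not, so the case check you anticipate cannot succeed. From $p^{\eta-u}\mid h$ you only get $p^{\beta+\eta-u}\mid bh$, and $\beta+\eta-u\ge r$ fails in two situations. It fails when $a=1$ and $b\neq 0$, since then $u=\eta$ and there is no restriction on $h$ at all. It also fails when $a\neq 1$ and $\beta<r-\eta+u$, which forces $q=q'=\beta$. The facts $u<\beta$ and $\beta\ge r-\eta$ that you quote do not imply $\beta\ge r-\eta+u$. Your fallback via $(a-1)y=by$ only yields $bhy=0$, which, as you observe yourself, is not what is needed.

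In fact the displayed equality is false as stated. Let $p=3$, $\eta=1$, $r=2$, $a=1$, $b=3$. Then $q=q'=\beta=1$, so $\eta\le q$, and $(\gamma,\mathfrak{f}_0)=(0,0)$ is an admissible parameter. Here $(y+w_h)\cdot(z+w_l)=z+3yl+w_l$.
\begin{itemize}
\item $w_1$ belongs to the displayed set (take $y=0$, $h=1$).
\item But $(1+w_0)\cdot w_1=3+w_1\neq w_1$, so $w_1$ is not central.
\item The actual center is $\{y+w_0: 3\mid y\}$, of order $3$, while the displayed set has order $9$.
\end{itemize}
More generally, whenever $\mathfrak{f}_0=0$ and $\beta+\eta-u<r$, the element $w_{p^{\eta-u}}$ lies in the displayed set but not in the center. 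What your argument actually proves is the corrected statement, in which the condition $bh=0$ (equivalently $p^{r-\beta}\mid h$) is added to the three listed conditions.
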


\begin{proof} Mimic the proof of Proposition~\ref{centro caso 2 del primer teorema}.
\end{proof}

\begin{remark} Let $\delta$ be the largest integer $k\le r$ such that $p^k\mid a-1-b = p^{r-\eta+u}s-p^{\beta}k'$. Observe that, if~$r-\eta+u\ne \beta$, then $\delta = \min(r-\eta+u,\beta)$, while if $r-\eta+u = \beta$, then $\delta\ge r-\eta+u$. Clearly
$$
\Z\bigl(E^{\blackdiamond,\Yleft}_{\gamma,\mathfrak{f}_0}\bigr) = \bigl\{y+w_h\in E^{\blackdiamond,\Yleft}_{\gamma,\mathfrak{f}_0}: p^{r-\delta}\mid y,\text{ } p^{\eta-u}\mid h \text{ and } h\mathfrak{f}_0=-by \bigr\}.
$$
Since $\min(\delta,\beta) = \min(r-\eta+u,\beta)$, if $\mathfrak{f}_0 = 0$, then
$$
\Z\bigl(E^{\blackdiamond,\Yleft}_{\gamma,\mathfrak{f}_0}\bigr) = \bigl\{y+w_h\in E^{\blackdiamond,\Yleft}_{\gamma,\mathfrak{f}_0}: y\in p^{r-\min(r-\eta+u,\beta)}\mathbb{Z}_{p^r} \text{ and }  h\in p^{\eta-u}\mathbb{Z}_{p^{\eta}} \bigr\}.
$$
Consequently $\bigl|\Z\bigl(E^{\blackdiamond,\Yleft}_{\gamma,\mathfrak{f}_0}\bigr)\bigr| = p^{u+\min(r-\eta+u,\beta)}$. Assume then that $\mathfrak{f}_0\ne 0$ and write $\mathfrak{f}_0 = z_2 = p^{u_2}s_2$ with~$p\nmid s_2$. Let~$\bar{k}'$ be the inverse of $k'$ in $\mathbb{Z}_{p^r}$ and let $\xi\coloneqq \max(\eta-u,\beta-u_2)$. A direct computation shows that
$$
\Z\bigl(E^{\blackdiamond,\Yleft}_{\gamma,\mathfrak{f}_0}\bigr) = \bigl\{y+w_h\in E^{\blackdiamond,\Yleft}_{\gamma,\mathfrak{f}_0}: h\in p^{\xi}\tilde{h} \text{ with } \tilde{h}\in \mathbb{Z}_{p^{\eta}},\text{ } y\in p^{r-\delta}\mathbb{Z}_{p^r}\text{ and } y\in -p^{\xi+u_2-\beta}\tilde{h}s_2\bar{k}' + p^{r-\beta} \mathbb{Z}_{p^r} \bigr\}.
$$
Assume that $\delta = \min(r-\eta+u,\beta)$. Since $p^{r-\delta}\mid y$, in this case $hf_0 = - by=0$. Thus,
$$
\Z\bigl(E^{\blackdiamond,\Yleft}_{\gamma,\mathfrak{f}_0}\bigr) = \bigl\{y+w_h\in E^{\blackdiamond,\Yleft}_{\gamma,\mathfrak{f}_0}: h\in p^{\xi'}\mathbb{Z}_{p^{\eta}} \text{ and }  y\in p^{r-\delta}\mathbb{Z}_{p^r}\bigr\},
$$
where $\xi'\coloneqq \max(\eta-u,r-u_2)$, and hence $\bigl|\Z\bigl(E^{\blackdiamond,\Yleft}_{\gamma,\mathfrak{f}_0}\bigr)\bigr| = p^{\eta-\xi'+\delta}$.
\end{remark}

\paragraph{Case {$\pmb{p=2}$}}

\begin{remark} Assume that $p=2$, which implies $(\eta,r) = (1,2)$. Since by~\eqref{formula para cdot en I times H'} and~\eqref{pep2pr}
$$
(y+w_h)\cdot (z+w_l) = a^hz + f_{\mathfrak{f}_0}(h,l) + ba^hyl + w_l,
$$
we obtain that, for each pair of parameters $(\gamma,\mathfrak{f}_0)$ as specified in Theorem~\ref{prop 4.10'},
$$
y+w_h\in \soc\bigl(E^{\blackdiamond,\Yleft}_{\gamma,\mathfrak{f}_0}\bigr) \quad\text{if and only if}\quad z = a^h z + f_{\mathfrak{f}_0}(h,l) + ba^h yl\text{ for all $l\in H$ and $z\in I$.}
$$
Taking $l=0$ and $z=1$, we conclude that if $y+w_h\in \soc\bigl(E^{\blackdiamond,\Yleft}_{\gamma,\mathfrak{f}_0}\bigr)$, then $a^h = 1$. Hence, we have
\begin{equation}\label{pepe5''}
y+w_h\in \soc\bigl(E^{\blackdiamond,\Yleft}_{\gamma,\mathfrak{f}_0}\bigr) \quad\text{if and only if}\quad a^h = 1 \text{ and } f_{\mathfrak{f}_0}(h,l) = - byl\text{ for all $l\in H$.}
\end{equation}
Note that
\begin{equation}\label{pepe5'}
a^h=1 \text{ if and only if } (a=1) \text{ or } (a=3\text{ and } h = 0).
\end{equation}
Moreover, by~\eqref{cociclo1 ej 4'} we have
\begin{equation}\label{1pepito}
f_{\mathfrak{f}_0}(h,l) = \mathfrak{f}_0 hl\qquad\text{for $0\le h,l\le 1$.}
\end{equation}
Combining this with~\eqref{pepe5''} and~\eqref{pepe5'}, we obtain that
\begin{equation}\label{2pepito}
\soc\bigl(E^{\blackdiamond,\Yleft}_{\gamma,\mathfrak{f}_0}\bigr) = \begin{cases} E^{\blackdiamond,\Yleft}_{\gamma,\mathfrak{f}_0} &\text{if $\mathfrak{f}_0 = 0$, $a=1$ and $b=0$,}\\ \{y+w_h\in E^{\blackdiamond,\Yleft}_{\gamma,\mathfrak{f}_0}:h=0\} &\text{if $(\mathfrak{f}_0,a)\in \{(2,1),(0,3),(1,3)\}$ and $b=0$,}\\ \{y+w_h\in E^{\blackdiamond,\Yleft}_{\gamma,\mathfrak{f}_0}:2\mid y\} &\text{if $\mathfrak{f}_0 = 0$, $a=1$ and $b=2$,}\\ \{y+w_h\in E^{\blackdiamond,\Yleft}_{\gamma,\mathfrak{f}_0}:2\mid y\text{ and } h=0\} &\text{if $\mathfrak{f}_0\in \{0,1,2,3\}$, $a=3$ and $b=2$.}\end{cases}
\end{equation}
Arguing as in Remark~\ref{caso 1 p=2}, we obtain that an element $y+w_h\in \soc\bigl(E^{\blackdiamond,\Yleft}_{\gamma,\mathfrak{f}_0}\bigr)$ belongs to $\Z\bigl(E^{\blackdiamond,\Yleft}_{\gamma,\mathfrak{f}_0}\bigr)$ if and only if $$
byh=0\qquad\text{and}\qquad y = a y + f_{\mathfrak{f}_0}(1,h).
$$
Using this, \eqref{1pepito} and \eqref{2pepito}, we obtain that
$$
\Z\bigl(E^{\blackdiamond,\Yleft}_{\gamma,\mathfrak{f}_0}\bigr) = \begin{cases} E^{\blackdiamond,\Yleft}_{\gamma,\mathfrak{f}_0} &\text{if $\mathfrak{f}_0 = 0$, $a=1$ and $b=0$,}\\ \{y+w_h\in E^{\blackdiamond,\Yleft}_{\gamma,\mathfrak{f}_0}:h=0\} &\text{if $\mathfrak{f}_0=2$, $a=1$ and $b=0$,}\\ \{y+w_h\in E^{\blackdiamond,\Yleft}_{\gamma,\mathfrak{f}_0}:2\mid y \text{ and }h=0\} &\text{if $(\mathfrak{f}_0,b)\in \{(0,0),(1,0),(0,2),(1,2),(2,2),(3,2)\}$ and $a=2$,}\\ \{y+w_h\in E^{\blackdiamond,\Yleft}_{\gamma,\mathfrak{f}_0}:2\mid y\} &\text{if $\mathfrak{f}_0 = 0$, $a=1$ and $b=2$.}\end{cases}
$$
\end{remark}

\end{document}